\documentclass{CMSLATEX}
\usepackage{latexsym, amssymb, enumerate, amsmath}
\usepackage{enumerate}

\usepackage{graphicx}
\sloppy

\thinmuskip = 0.5\thinmuskip \medmuskip = 0.5\medmuskip
\thickmuskip = 0.5\thickmuskip \arraycolsep = 0.3\arraycolsep


\newtheorem{thm}{Theorem}[section]

\newtheorem{lem}[thm]{Lemma}

\newtheorem{defn}[thm]{Definition}

\newtheorem{example}[thm]{Example}



\newtheorem{rem}[thm]{Remark}
%
%



\begin{document}

\title{Fourth order quasi-compact difference schemes for (tempered) space fractional diffusion equations\thanks{This work was supported by the National Natural Science Foundation of China under Grant  Nos. 11271173 and 11471150.}
}
\author{Yanyan Yu\thanks{School of Mathematics and Statistics, Gansu Key Laboratory of Applied Mathematics and Complex Systems, Lanzhou University, Lanzhou 730000, People's Republic of China  (yuyy1029@126.com).}
  \and   Weihua Deng\thanks {School of Mathematics and Statistics, Gansu Key Laboratory of Applied Mathematics and Complex Systems, Lanzhou University, Lanzhou 730000, People's Republic of China (dengwh@lzu.edu.cn).} \and
   Yujiang Wu\thanks {School of Mathematics and Statistics, Gansu Key Laboratory of Applied Mathematics and Complex Systems, Lanzhou University, Lanzhou 730000, People's Republic of China (myjaw@lzu.edu.cn).} }

\pagestyle{myheadings} \markboth{Fourth order quasi-compact difference schemes}{Yanyan Yu, Weihua Deng  and Yujiang Wu}
\maketitle
\begin{abstract}
The continuous time random walk (CTRW) underlies many fundamental processes in non-equilibrium statistical physics. When the jump length of CTRW obeys a power-law distribution, its corresponding Fokker-Planck equation has space fractional derivative, which characterizes L\'{e}vy flights. Sometimes the infinite variance of L\'{e}vy flight discourages it as a physical approach; exponentially tempering the power-law jump length of CTRW makes it more `physical' and the tempered space fractional diffusion equation appears. This paper provides the basic strategy of deriving the high order quasi-compact discretizations for space fractional derivative and tempered space fractional derivative. The fourth order quasi-compact discretization for space fractional derivative is applied to solve space fractional diffusion equation and the unconditional stability and convergence of the scheme are theoretically proved and numerically verified. Furthermore, the tempered space fractional diffusion equation is effectively solved by its counterpart of the fourth order quasi-compact scheme; and the convergence orders are verified numerically.
\end{abstract}

\begin{keywords}
space fractional derivative,  tempered space fractional derivative,  shifted Gr\"{u}nwald discretization,  quasi-compact difference scheme, numerical  stability and convergence.

\smallskip

{\bf Subject classifications.  65M06, 65M12, 26A33}
\end{keywords}


\section{Introduction}
In recent years, more and more scientific and engineering problems are involved in fractional calculus. They range from relaxation oscillation phenomena \cite{mainardi1996fractional}  to viscoelasticity \cite{bagley1983theoretical}, and from control theory
\cite{vinagre2000some} to transport problem \cite{metzler2004restaurant}. The fractional diffusion equation has been put forward as a more suitable model to describe ion channel gating dynamics \cite{goychuk2004fractional} and subdiffusive anomalous transport in an external field \cite{barkai2000continuous}, which are resulted in from the continuous time random walk (CTRW) in the scaling limit. The CTRW is a mathematical formalization of a path that consists of a succession of random steps including the elements of random waiting time and jump length; and it underlies many fundamental stochastic processes in statistical physics. When the first moment of the distribution of waiting time and the second moment of jump length are finite, the probability density function (PDF) of the particle's location and time satisfies the classical diffusion equation. However, if the jump length obeys the power-law distribution, the PDF of the particle's location and time is the solution of space fractional diffusion equation; and the corresponding dynamics is called L\'{e}vy flight. Sometimes the jumps of the particles are limited by the finite size of the physical system and the infinite variance of L\'{e}vy flight discourages it as a physical approach. So the power-law distribution of the jump length is expected to be truncated \cite{mantegna1994stochastic} or exponentially tempered \cite{Cartea}. For the CTRW with the distribution of the tempered jump length $|x|^{-(1+\alpha)}e^{-\lambda |x|}$ \cite{chakrabarty2011tempered}, the corresponding PDF of the particles satisfies the tempered space fractional diffusion equation  \cite{Cartea}.

It seems that there are less works for the numerical solutions of tempered space fractional diffusion equation \cite{li2014high}.  However, for the space fractional diffusion or advection-diffusion equation, much progress has been made for its numerical methods, e.g., \cite{liu2004numerical, meerschaert2004finite, meerschaert2006finite, meerschaert2006, Tadjeran2006, tadjeran2007second, nasir2013second, Tian2012, zhou2013quasi, deng2013efficient}. Transforming the Riemann-Liouville fractional derivative to Caputo fractional derivative, the space fractional Fokker-Planck equation is solved by the method of lines in \cite{liu2004numerical}. Using the superconvergence of Gr\"{u}nwald discretization at a particular point, a second order finite difference scheme is proposed in \cite{nasir2013second}. Based on the difference discretization and spline approximation to the Riemann-Liouville fractional derivative, a second order scheme is presented for the three dimensional space fractional partial differential equations in \cite{deng2013efficient}. Currently, the most popular discretization scheme for the space Riemann-Liouville fractional derivative seems to be the weighted and shifted Gr\"{u}nwald (WSGD) operator. The first order WSGD operator is firstly presented and detailedly discussed in \cite{meerschaert2004finite, meerschaert2006finite, meerschaert2006} and the second order convergence is obtained by using extrapolation method \cite{Tadjeran2006, tadjeran2007second}. The second order WSGD operator is given in \cite{Tian2012}; and the third order compact WSGD (CWSGD) is presented in \cite{zhou2013quasi}. Following the idea of weighting and shifting Gr\"{u}nwald operator, this paper provides the basic strategy of deriving the quasi-compact scheme with any desired convergence orders for space fractional diffusion equation; and it can also be extended to solve the tempered space fractional diffusion equation. The fourth order quasi-compact scheme is detailedly discussed in solving space fractional diffusion equation, including stability and convergence analysis and numerical verification of convergence orders. The fourth order quasi-compact scheme for tempered space fractional diffusion equation is also proposed and effectively used to solve the equation; and the convergence orders are numerically verified.

The outline of this paper is as follows. In Sec. 2, the high order quasi-compact discretizations are presented to approximate space Riemann-Liouville fractional derivative. In Sec. 3, following the obtained quasi-compact discretizations, the high order quasi-compact scheme for the one dimensional space fractional diffusion equation is designed and its stability and convergence analysis are performed. Sec. 4 focuses on the quasi-compact scheme and the corresponding stability and convergence analysis in two dimensional case. The high order quasi-compact discretizations is extended to tempered space fractional derivative in Sec. 5 and the corresponding scheme is derived to solve tempered space fractional diffusion equation. In Sec. 6, numerical experiments are performed to testify the efficiency and verify the convergence orders of the schemes. We conclude the paper with some discussions in the last section.

\section{Quasi-compact discretizations for Riemann-Liouville space fractional derivatives}
We first introduce some definitions and lemmas, including  Riemann-Liouville fractional derivatives and  shifted Gr\"{u}nwald-Letnikov  formulations.
\begin{defn}\label{drl}\cite{I1998}
If the function $u(x)$ is defined in the interval $(a,b)$ and regular enough, then the $\alpha$-th order left and right Riemann-Liouville fractional derivatives are, respectively, defined as
\begin{equation}\label{lefrie}
_a D_x^{\alpha}u(x)=\frac{1}{\Gamma(n-\alpha)}\frac{d^n}{dx^n} \int^{x}_{a}(x-s)^{n-\alpha-1}u(s) ds ,\quad n-1<\alpha<n
\end{equation}
and
\begin{equation}
_xD_b^{\alpha}u(x)=\frac{(-1)^n}{\Gamma(n-\alpha)}\frac{d^n}{dx^n} \int^{b}_{x}(s-x)^{n-\alpha-1}u(s)ds ,\quad n-1<\alpha<n,
\end{equation}
where $a$ can be $-\infty$ and $b$ can be  $+\infty$.
\end{defn}

And the standard left and right Gr\"{u}nwald-Letnikov  formulations which can be potentially used to approximate the left and right  Riemann-Liouville fractional derivatives are, respectively, given as
\begin{equation}
_{a}D^\alpha_xu(x)=\lim\limits_{h\rightarrow 0} \frac{1}{h^\alpha} \sum\limits_{k=0}^{[\frac{x-a}{h}]} g_k^{(\alpha)}u(x-kh)
\end{equation}
and
\begin{equation}
_xD^\alpha_{b}u(x)=\lim\limits_{h\rightarrow 0} \frac{1}{h^\alpha} \sum\limits_{k=0}^{[\frac{b-x}{h}]} g_k^{(\alpha)}u(x+kh),
\end{equation}
where the Gr\"{u}nwald weights $g_k^{(\alpha)}=\frac{\Gamma(k-\alpha)}{\Gamma(-\alpha)\Gamma(k+1)}$ are the coefficients of the power series expansion of $(1-z)^\alpha$. 
For getting the stable scheme, a shifted Gr\"{u}nwald-Letnikov operator
is proposed to approximate the left Riemann-Liouville  fractional derivative with first order accuracy  \cite{Tadjeran2006}.

\begin{lem}[\cite{Tadjeran2006}] \label{lem1}
 Let $1<\alpha<2$, $u\in C^{n+3}(R)$, and $D^ku(x)\in L^1(R)$, $k=0,1,  \cdots,n+3$. For any integer $p$, define the left shifted  Gr\"{u}nwald-Letnikov operator by
\begin{equation}\label{delf}
\Delta_p^\alpha u(x):=\frac{1}{h^\alpha}\sum\limits_{k=0}^\infty g_k^{(\alpha)}u(x-(k-p)h).
\end{equation}
Then we have
\begin{equation}\label{scheme0}
\Delta_p^\alpha u(x)={_{-\infty}D}^\alpha_x u(x)+\sum\limits_{l=1}^{n-1}a_{p, l}^\alpha \, _{-\infty}D_x^{\alpha+l}u(x)h^l+O(h^n),
\end{equation}
uniformly in $x\in R$, where the weights $a_{p, l}^\alpha $ are the coefficients of the power series expansion of the function $(\frac{1-e^{-z}}{z})^\alpha e^{pz}$, and the first four terms of the coefficients are
$a_{p, 0}^\alpha =1$, $a_{p, 1}^\alpha =p-\alpha/2$, $a_{p, 2}^\alpha =(\alpha+3\alpha^2-12\alpha p+12p^2)/24$, and $a_{p, 3}^\alpha= (8 p^3 + 2 p \alpha - 12 p^2 \alpha - \alpha^2 +
   6 p\alpha^2 - \alpha^3)/48$.

\end{lem}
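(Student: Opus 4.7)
The plan is to pass through Fourier transforms and reduce the lemma to a Taylor expansion of a single scalar symbol. First, taking the Fourier transform of the defining series for $\Delta_p^\alpha u(x)$, using the shift identity and the generating function $\sum_{k=0}^\infty g_k^{(\alpha)} z^k = (1-z)^\alpha$, I would obtain
\begin{equation*}
\widehat{\Delta_p^\alpha u}(\omega) = \frac{e^{i\omega p h}(1-e^{-i\omega h})^\alpha}{h^\alpha}\,\hat u(\omega) = (i\omega)^\alpha \left(\frac{1-e^{-z}}{z}\right)^\alpha e^{pz}\,\hat u(\omega),
\end{equation*}
with $z := i\omega h$. This reduces the lemma to a local analysis near $z=0$ of the scalar symbol
\begin{equation*}
W_p(z) := \left(\frac{1-e^{-z}}{z}\right)^\alpha e^{pz}.
\end{equation*}

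Second, since $\frac{1-e^{-z}}{z}=1-z/2+z^2/6-\cdots$ is analytic with value $1$ at the origin, $W_p$ is analytic in a neighbourhood of $0$ and admits a Taylor expansion $W_p(z)=\sum_{l\ge 0}a_{p,l}^\alpha z^l$. Multiplying out the two known series and collecting powers of $z$ recovers the four explicit coefficients $a_{p,0}^\alpha,\dots,a_{p,3}^\alpha$ listed in the statement. Using the Fourier identity $\widehat{{_{-\infty}D}_x^{\alpha+l}u}(\omega)=(i\omega)^{\alpha+l}\hat u(\omega)$, the truncated Taylor polynomial, multiplied by $(i\omega)^\alpha\hat u(\omega)$ and inverted term by term, reproduces the sum on the right-hand side of (\ref{scheme0}).

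Third, the remainder to be controlled is
\begin{equation*}
E_n(\omega,h) := \Bigl[(i\omega)^\alpha W_p(i\omega h)-\sum_{l=0}^{n-1}a_{p,l}^\alpha(i\omega)^{\alpha+l}h^l\Bigr]\hat u(\omega),
\end{equation*}
and one has to show that its inverse Fourier transform is $O(h^n)$ uniformly in $x$. The regularity hypothesis $u\in C^{n+3}(R)$ with $D^k u\in L^1(R)$ for $k\le n+3$ forces $|\omega|^k\hat u(\omega)$ to be bounded for each such $k$, giving $\hat u$ polynomial decay of order $n+3$, which is what makes every integral in the argument absolutely convergent.

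The main obstacle is bounding $E_n$ uniformly across all of $R$. For $|\omega h|\le 1$ the analytic Taylor remainder estimate for $W_p$ furnishes a clean bound $|E_n(\omega,h)|\le C\,h^n|\omega|^{\alpha+n}|\hat u(\omega)|$, which is integrable by the regularity assumption since $\alpha+n\le n+2<n+3$. For $|\omega h|>1$ one cannot Taylor-expand, and must instead use the global bound $|(1-e^{-i\omega h})^\alpha e^{i\omega ph}/h^\alpha|\le 2^\alpha h^{-\alpha}$ on the full symbol, paired with analogous crude bounds on each of the truncation terms, and then exploit the fast decay of $\hat u$ to absorb the high-frequency tail into an $O(h^n)$ contribution. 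Splitting the inverse Fourier integral at $|\omega|=h^{-1}$ and estimating each piece separately is the standard device; this is essentially the argument used in \cite{Tadjeran2006} for the first-order case, simply carried $n$ terms further in the Taylor expansion of $W_p$.
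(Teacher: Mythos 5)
The paper does not prove this lemma at all --- it is quoted verbatim from \cite{Tadjeran2006} --- so there is no internal proof to compare against; your Fourier-symbol argument is exactly the standard proof from that reference (and its higher-order descendants such as \cite{Tian2012}), and it is correct: the symbol identity, the Taylor coefficients of $\left(\frac{1-e^{-z}}{z}\right)^\alpha e^{pz}$, and the use of $D^k u\in L^1$ for $k\le n+3$ to make the remainder integrable all check out. The only remark worth making is that the split at $|\omega|=h^{-1}$ is not actually needed: since $W_p$ is bounded on the imaginary axis and the truncated polynomial grows only like $|z|^{n-1}$, the function $\phi(z)=z^{-n}\bigl(W_p(z)-\sum_{l<n}a_{p,l}^\alpha z^l\bigr)$ is uniformly bounded on $i\mathbb{R}$, which gives the single global bound $|E_n(\omega,h)|\le C h^n|\omega|^{\alpha+n}|\hat u(\omega)|$ and hence the uniform $O(h^n)$ estimate in one step.
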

To approximate the right Riemann-Liouville fractional derivative $   _x D ^\alpha_{ \infty} u(x)$, the right shifted Gr\"{u}nwald-Letnikov operator is given by
$
\Lambda_p^\alpha f(x):=\frac{1}{h^\alpha}\sum\limits_{k=0}^\infty g_k^{(\alpha)}f(x+(k-p)h) .
$
In the finite interval  $[a,b]$, the shifted Gr\"{u}nwald-Letnikov fractional derivatives are
\begin{equation} \label{eq7}
\tilde{\Delta}_p^\alpha u(x)=\frac{1}{h^\alpha}\sum\limits_{k=0}^{[\frac{x-a }{h}]+p} g_k^{(\alpha)}u(x-(k-p)h)
\end{equation}
and
\begin{equation}\label{eq8}
\tilde{\Lambda}_p^\alpha u(x)=\frac{1}{h^\alpha}\sum\limits_{k=0}^{[\frac{b -x}{h}]+p} g_k^{(\alpha)}u(x+(k-p)h).
\end{equation}
In the remaining analysis of the paper, for a function defined in the bounded interval, we suppose that it has been zero extended to $R$ whenever the value of $u(x)$ outside of the bounded interval is used.

%
\subsection{ Fourth order quasi-compact approximation to the Riemann-Liouville fractional derivative}
According to the definitions of the shifted Gr\"{u}nwald-Letnikov fractional derivatives, we know that $p$ can be any integer. In order to ensure that the nodes in (\ref{eq7}) or (\ref{eq8}) are within the bounded interval,  
we need to choose the integer $p\in\{1, 0, -1\}$ when approximating non-periodic fractional differential equation in the
bounded interval.
 Inspired by the shifted Gr\"{u}nwald-Letnikov operator and the Taylor expansion, we derive the following fourth order combined quasi-compact approximations.
\begin{thm}\label{th2.1}
Let $u(x)\in C^7(R)$ and all the derivatives of $u(x)$ up to order $7$ belong to $L^1(R)$.  Then the following quasi-compact approximation has fourth order accuracy, i.e.,
\begin{equation}\label{2.1}
 P_x\,{_{-\infty}D_x^\alpha} u(x)
 =\mu_1 \Delta_1^\alpha u(x)+ \mu_0 \Delta_0^\alpha u(x)+ \mu_{-1} \Delta_{-1}^\alpha u(x) +O(h^4),
\end{equation}
where $P_x=1+h^2b_2^\alpha\delta_x^2$, called CWSGD operator; $\delta_x^2$ is the centered difference operator; and the coefficients $ b_2^\alpha  $, $\mu_1$, $\mu_0$, and $\mu_{-1}$ are the functions of $ \alpha$ and
\begin{equation}\label{coeff4}
\left\{
\begin{array}{lc}
\displaystyle \mu_1= (1 + \alpha)  (2 + \alpha)/12, \\\\
\displaystyle \mu_0 =- (-2 +\alpha) (2 + \alpha)/6,\\\\
\displaystyle  \mu_{-1} =(-2 + \alpha)  (-1 + \alpha)/12,\\\\
\displaystyle b_2^\alpha  =(4 + \alpha - \alpha^2)/24.
\end{array}
\right.
\end{equation}
\end{thm}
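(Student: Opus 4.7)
The plan is to Taylor-expand both sides of \eqref{2.1} in powers of $h$ through order $h^{3}$ and match coefficients, treating $\mu_{1},\mu_{0},\mu_{-1},b_{2}^{\alpha}$ as four unknowns in a small linear system.

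For the right-hand side I apply Lemma \ref{lem1} with $n=4$ at each of the shifts $p\in\{-1,0,1\}$. The regularity hypotheses $u\in C^{7}(R)$ and $D^{k}u\in L^{1}(R)$ for $k\le 7$ of Theorem \ref{th2.1} exactly match those required by Lemma \ref{lem1}, hence
\begin{equation*}
\Delta_{p}^{\alpha}u(x)=\sum_{l=0}^{3}a_{p,l}^{\alpha}\,{_{-\infty}D_{x}^{\alpha+l}}u(x)\,h^{l}+O(h^{4})
\end{equation*}
uniformly in $x$, with the explicit $a_{p,l}^{\alpha}$ listed in the lemma. For the left-hand side I use the standard centered-difference expansion $\delta_{x}^{2}v(x)=v''(x)+O(h^{2})$ for smooth $v$; applied to $v={_{-\infty}D_{x}^{\alpha}}u$ and multiplied by $h^{2}b_{2}^{\alpha}$ this gives
\begin{equation*}
P_{x}\,{_{-\infty}D_{x}^{\alpha}}u(x)={_{-\infty}D_{x}^{\alpha}}u(x)+b_{2}^{\alpha}\,{_{-\infty}D_{x}^{\alpha+2}}u(x)\,h^{2}+O(h^{4}),
\end{equation*}
so that through order $h^{3}$ the left-hand side carries only an $h^{0}$-term (coefficient $1$) and an $h^{2}$-term (coefficient $b_{2}^{\alpha}$), while its $h^{1}$- and $h^{3}$-coefficients vanish.

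Equating coefficients of ${_{-\infty}D_{x}^{\alpha+l}}u(x)\,h^{l}$ on the two sides for $l=0,1,2,3$ yields the linear system
\begin{align*}
\mu_{1}+\mu_{0}+\mu_{-1}&=1,\\
\mu_{1}a_{1,1}^{\alpha}+\mu_{0}a_{0,1}^{\alpha}+\mu_{-1}a_{-1,1}^{\alpha}&=0,\\
\mu_{1}a_{1,2}^{\alpha}+\mu_{0}a_{0,2}^{\alpha}+\mu_{-1}a_{-1,2}^{\alpha}&=b_{2}^{\alpha},\\
\mu_{1}a_{1,3}^{\alpha}+\mu_{0}a_{0,3}^{\alpha}+\mu_{-1}a_{-1,3}^{\alpha}&=0.
\end{align*}
The three equations not involving $b_{2}^{\alpha}$ (those for $l=0,1,3$) form a $3\times 3$ system in $\mu_{1},\mu_{0},\mu_{-1}$ which, after substituting the explicit $a_{p,l}^{\alpha}$ from Lemma \ref{lem1}, is uniquely solvable for $1<\alpha<2$; the remaining equation for $l=2$ then reads off $b_{2}^{\alpha}$ in closed form. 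A direct computation produces the formulas \eqref{coeff4}; equivalently, one verifies in finitely many polynomial identities in $\alpha$ that those values satisfy all four equations.

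The only real obstacle is the routine-but-error-prone algebra in $\alpha$; no new analytic tool is needed beyond Lemma \ref{lem1} and the classical Taylor expansion of the centered second difference, and all error terms are genuinely $O(h^{4})$ and uniform in $x$ under the stated regularity. A useful sanity check along the way: setting $b_{2}^{\alpha}\equiv 0$ collapses the system to three equations in three unknowns and recovers the second-order WSGD operator of \cite{Tian2012}, which clarifies that the role of the preconditioner $P_{x}$ is precisely to supply the extra degree of freedom required to annihilate the $h^{3}$-coefficient and thereby lift the order from two to four.
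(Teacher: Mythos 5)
Your proposal is correct and follows essentially the same route as the paper: apply Lemma \ref{lem1} with $n=4$ at the three shifts, absorb the surviving $h^{2}$-term into $P_{x}$ via $\delta_{x}^{2}v=v''+O(h^{2})$, and determine $\mu_{1},\mu_{0},\mu_{-1}$ from the $l=0,1,3$ moment conditions with $b_{2}^{\alpha}=\mu_{1}a_{1,2}^{\alpha}+\mu_{0}a_{0,2}^{\alpha}+\mu_{-1}a_{-1,2}^{\alpha}$, exactly as in (\ref{schm11}). The paper merely states the resulting coefficients rather than displaying the $4\times 4$ system, but the underlying derivation (made explicit in the fifth-order analogue (\ref{fifcoef5})) is the one you give.
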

In fact, under the assumptions of the theorem, we know that for any fixed order $\alpha$ and the coefficients $\mu_1 $, $\mu_0 $, and $\mu_{-1} $
    the following equalities hold.
\begin{equation}\label{schm11}
\begin{array}{l}
\displaystyle\mu_1 \Delta_1^\alpha u(x)+ \mu_0 \Delta_0^\alpha u(x)+ \mu_{-1} \Delta_{-1}^\alpha u(x)\\\\
 \displaystyle ={_{-\infty}D}_x^\alpha u(x)+b_2^\alpha   \,_{-\infty}D_x^{\alpha+2}u(x)h^2+O(h^4)\\\\
  \displaystyle  =\left(1+h^2b_2^\alpha \frac{\partial^2}{\partial x^2}\right)\,{_{-\infty}D_x}^\alpha u(x) +O(h^4)\\\\
   \displaystyle  =P_x\,{_{-\infty}D_x}^\alpha u(x)  +O(h^4),
\end{array}
\end{equation}
where $b_2^\alpha=\mu_1a_{1,2}^\alpha +\mu_0a_{0,2}^\alpha +\mu_{-1}a_{-1,2}^\alpha$.
 Then we get (\ref{2.1}). 
 Since $\delta_x^2 u(x)=(u(x-h)-2 u(x)+u(x+h))/h^2=\frac{\partial^2 u(x)}{\partial x^2}+O(h^2)$, we have  for any function $u$,
$$
P_x u=\left(1+h^2b_2^\alpha \frac{\partial^2}{\partial x^2}\right)u+O(h^4).
$$
 In a similar way, we can derive the fourth order quasi-compact approximation for the right Riemann-Liouville fractional derivative:
\begin{equation}
 P_x\, {_x D^\alpha_{+\infty}} u(x)
  =\mu_1 \Lambda_1^\alpha u(x)+ \mu_0 \Lambda_0^\alpha u(x)+ \mu_{-1} \Lambda_{-1}^\alpha u(x) +O(h^4).
\end{equation}
For $u(x)$ defined in a bounded interval, supposing its zero extension to $R$ satisfies the assumptions of Theorem \ref{th2.1}, the following approximations hold:
\begin{equation}\label{fors11}
P_x\,{_{a}D}^\alpha_x u(x)=\mu_1 \tilde{\Delta}_1^\alpha u(x)+ \mu_0 \tilde{\Delta}_0^\alpha u(x)+ \mu_{-1} \tilde{\Delta}_{-1}^\alpha u(x)+O(h^4)
\end{equation}
and
\begin{equation}\label{fors12}
P_x\,{_{x}D}^\alpha_b u(x)=\mu_1 \tilde{\Lambda}_1^\alpha u(x)+ \mu_0 \tilde{\Lambda}_0^\alpha u(x)+ \mu_{-1} \tilde{\Lambda}_{-1}^\alpha u(x)+O(h^4).
\end{equation}

Now using the CWSGD operator, we solve a two-point boundary value problem to numerically verify the above statements.

\begin{example}\label{exforth1}
Consider the steady state fractional diffusion problem
\[ _0D_x^\alpha u(x)=\frac{720 x^{6 -  \alpha}}{\Gamma(7 - \alpha)} ,  \quad x\in(0,1),\]
with  $1<\alpha<2$  and the boundary conditions $u(0)=0$, $u(1)=1$. Its exact solution is $u(x)=x^{6}$.
 \end{example}
\begin{table}[htbp]
\centering\small
\caption{Numerical errors and convergence rates in $L_\infty$ norm and $L_2$ norm by using (\ref{fors11}) to solve Example \ref{exforth1}, where $U$ denotes  the numerical solution and   $h_x$ is the space step size.}\vspace{1em}  {\begin{tabular}{@{}cccccc@{}} \hline
 $\alpha$ & $h_x$ & $\|u-U\|_2$ &rate & $\|u-U\|_\infty$ &rate  \\
 \hline
1.1&$1/8$ & $   6.0879e-04  $ &   $         $ & $ 1.0551e-03  $ &   $ $  \\
&$1/16 $&  $  2.7715e-05   $ &   $ 4.4572  $  &  $   5.1569e-05  $ &   $ 4.3548  $  \\
&$ 1/32$ &  $  1.5024e-06  $  &   $ 4.2054  $   & $  2.8244e-06  $  &    $ 4.1905   $  \\
&$ 1/64$ &  $ 9.0430e-08   $  &   $ 4.0543  $   &  $ 1.6385e-07   $  &    $   4.1075 $  \\
&$1/128$ &  $ 5.5808e-09   $  &   $   4.0183  $  &  $   9.5651e-09 $  &   $ 4.0984  $ \\
 \hline
1.5&$1/8$ & $    2.9459e-04  $ &   $          $ & $  3.9380e-04  $  &  $  $   \\
&$1/16 $&  $   1.8470e-05    $ &   $  3.9955  $    &  $ 2.4150e-05    $ &   $   4.0274   $  \\
&$ 1/32$ &  $  1.1590e-06    $  &  $  3.9942  $    &  $  1.5252e-06   $  &   $ 3.9850   $  \\
&$ 1/64$ &  $    7.2639e-08  $  &   $  3.9960  $   &  $ 9.5671e-08   $  &    $ 3.9948    $  \\
&$1/128$ &  $  4.5471e-09    $  &   $ 3.9977   $   &  $  5.9911e-09   $  &   $   3.9972  $ \\
\hline
1.9&$1/8$ & $ 1.1926e-04    $ &   $         $ & $    1.6198e-04   $ &   $      $  \\
&$1/16 $&  $ 7.4913e-06     $ &   $  3.9927 $  &  $   1.0174e-05   $ &   $   3.9928  $  \\
&$ 1/32$ &  $ 4.6919e-07    $  &  $  3.9970 $   &  $ 6.3722e-07    $  &    $ 3.9970   $  \\
&$ 1/64$ &  $  2.9352e-08   $  &  $  3.9986  $   &  $   3.9899e-08  $  &    $   3.9974  $  \\
&$1/128$ &  $  1.8352e-09   $  &  $ 3.9994   $  &  $    2.4947e-09   $  &   $  3.9994    $ \\
\hline
\end{tabular}\label{tab1}}
\end{table}

Using the quasi-compact scheme (\ref{fors11}) to solve Example \ref{exforth1} leads to the desired convergence orders; see Table \ref{tab1}.


\subsection{ Fifth order quasi-compact approximation to the Riemann-Liouville fractional derivative}
In this subsection, we present a fifth order quasi-compact approximation given as follows.

\begin{thm}\label{th2.2}
Let $u(x)\in C^8(R)$ and all the derivatives of $u(x)$ up to order $8$ belong to $L^1(R)$.  Then the following quasi-compact approximation has fifth order accuracy, i.e.,
\begin{equation}\label{fif}
\displaystyle P_x^5\,  _{-\infty}D^\alpha_x u(x)
  =\mu_1 \Delta_1^\alpha f(x)+ \mu_0 \Delta_0^\alpha f(x)+ \mu_{-1} \Delta_{-1}^\alpha f(x) +O(h^5),
\end{equation}
where $P_x^5\,  _{-\infty}D^\alpha_x u(x) =\gamma_1\,  _{-\infty}D^\alpha_x u(x-h)+ \,  _{-\infty}D^\alpha_x u(x) +\gamma_2\,  _{-\infty}D^\alpha_x u(x+h)$, called 5-CWSGD operator, and
\begin{equation}\label{fifcoef}
\left\{
\begin{array}{lc}
\displaystyle \gamma_1=\frac{350 + 331 \alpha - 15 \alpha^2 - 75 \alpha^3 - 15 \alpha^4}{1724 - 2\alpha - 570\alpha^2 - 30 \alpha^3 + 30\alpha^4},\\\\
\displaystyle \gamma_2 = \frac{566 - 329\alpha - 135 \alpha^2 + 105\alpha^3 - 15 \alpha^4}{1724 - 2 \alpha - 570\alpha^2 - 30\alpha^3 + 30 \alpha^4},\\\\
\displaystyle \mu_1=\frac{566 + 329 \alpha - 135 \alpha^2 - 105 \alpha^3 - 15\alpha^4}{1724 - 2 \alpha - 570\alpha^2 - 30 \alpha^3 + 30\alpha^4},\\\\
\displaystyle  \mu_0=\frac{862 + \alpha- 285\alpha^2 + 15\alpha^3 + 15\alpha^4}{862 - a - 285 \alpha^2 - 15 \alpha^3 + 15\alpha^4},\\\\
\displaystyle \mu_{-1}=\frac{350 - 331\alpha - 15 \alpha^2 + 75 \alpha^3 - 15 \alpha^4}{1724 - 2\alpha - 570 \alpha^2 -  30 \alpha^3 + 30\alpha^4}.
\end{array}
\right.
\end{equation}
\end{thm}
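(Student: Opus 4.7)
The plan mirrors the proof of Theorem \ref{th2.1}, but raises the matching order by one. First I would apply Lemma \ref{lem1} with $n = 6$ to each of the three shifted Gr\"unwald operators, obtaining
\[
\Delta_p^\alpha u(x) = \sum_{l=0}^{4} a_{p,l}^\alpha \,{_{-\infty}D}_x^{\alpha+l} u(x)\, h^l + O(h^5), \qquad p \in \{-1,0,1\}.
\]
The first four coefficients $a_{p,l}^\alpha$ are listed in Lemma \ref{lem1}; the remaining $a_{p,4}^\alpha$ is read off from the next term in the power series of $\left(\frac{1-e^{-z}}{z}\right)^\alpha e^{pz}$. Forming the linear combination on the right-hand side of (\ref{fif}) therefore gives
\[
\sum_{p} \mu_p\, \Delta_p^\alpha u(x) = \sum_{l=0}^{4} \Bigl(\sum_{p} \mu_p\, a_{p,l}^\alpha \Bigr)\,{_{-\infty}D}_x^{\alpha+l} u(x)\, h^l + O(h^5).
\]

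Next I would expand the left-hand side in the same variables. Writing $v(x) := {_{-\infty}D}_x^\alpha u(x)$ and using $v^{(j)}(x) = {_{-\infty}D}_x^{\alpha+j} u(x)$, Taylor expansion of $v(x \pm h)$ about $x$ yields
\[
P_x^5 v(x) = \sum_{l=0}^{4} c_l\,{_{-\infty}D}_x^{\alpha+l} u(x)\, h^l + O(h^5),
\]
with $c_0 = 1+\gamma_1+\gamma_2$, $c_1 = \gamma_2-\gamma_1$, $c_2 = (\gamma_1+\gamma_2)/2$, $c_3 = (\gamma_2-\gamma_1)/6$, and $c_4 = (\gamma_1+\gamma_2)/24$. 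The smoothness and integrability hypotheses on $u$ (through order $7$ or $8$) ensure that the $O(h^5)$ remainders in both expansions hold uniformly in $x$.

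Equating the coefficients of $h^l\,{_{-\infty}D}_x^{\alpha+l} u(x)$ for $l = 0, 1, 2, 3, 4$ now yields a $5\times 5$ linear system
\[
\sum_{p \in \{-1,0,1\}} \mu_p\, a_{p,l}^\alpha \;=\; c_l, \qquad l = 0, 1, 2, 3, 4,
\]
in the five unknowns $\mu_{-1}, \mu_0, \mu_1, \gamma_1, \gamma_2$, whose coefficients are polynomials in $\alpha$. I would then solve this system by Cramer's rule and check that its unique solution is precisely (\ref{fifcoef}). This symbolic solve is the main obstacle: the determinant and the numerators are polynomials of moderate degree in $\alpha$ with many terms, and one must additionally verify that the common denominator $1724-2\alpha-570\alpha^2-30\alpha^3+30\alpha^4$ does not vanish for $1<\alpha<2$, so that the coefficients in (\ref{fifcoef}) are well defined. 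Once the unique solvability is established and (\ref{fifcoef}) confirmed, identity (\ref{fif}) follows immediately from the two uniform expansions above.
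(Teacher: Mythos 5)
Your proposal matches the paper's derivation essentially verbatim: expand each $\Delta_p^\alpha u$ via Lemma \ref{lem1}, Taylor-expand ${_{-\infty}D}_x^\alpha u(x\pm h)$, and match the coefficients of $h^l\,{_{-\infty}D}_x^{\alpha+l}u(x)$ for $l=0,\dots,4$, which is exactly the $5\times5$ linear system (\ref{fifcoef5}) whose solution the paper records as (\ref{fifcoef}). The only quibble is that Lemma \ref{lem1} should be invoked with $n=5$ rather than $n=6$ --- that already yields precisely the expansion you wrote and is consistent with the theorem's $C^8(R)$ hypothesis, whereas $n=6$ would demand one more derivative than is assumed.
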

The way of deriving (\ref{fif}) is similar to the derivation of the fourth order quasi-compact approximation. On one hand, from (\ref{scheme0}), we know for different parameter $p\in\{1,0,-1\}$ there exist three equalities
 \begin{equation} \label{2.2.1}
\Delta_p^\alpha u(x)={_{-\infty}D}^\alpha_x u(x)+\sum\limits_{k=1}^{4}a_{p, k}^\alpha \, _{-\infty}D_x^{\alpha+k}u(x)h^k+O(h^5),\quad p=1,0,-1.
\end{equation}
On the other hand, in view of the Taylor expansion we know
 \begin{equation}\label{2.2.2}
\begin{array}{llll}
\displaystyle  {_{-\infty}D}^\alpha _xu(x-h) ={_{-\infty}D}^\alpha_x u(x)+(-1)^k\sum\limits_{k=1}^{4}\frac{1}{k!}\,{_{-\infty}D}^{\alpha+k}_x u(x)h^k+O(h^5),\\\\
 \displaystyle  {_{-\infty}D}^\alpha _xu(x+h) ={_{-\infty}D}^\alpha_x u(x)+\sum\limits_{k=1}^{4}\frac{1}{k!}\,{_{-\infty}D}^{\alpha+k}_x u(x)h^k+O(h^5).
\end{array}
\end{equation}
So in order to get the fifth order quasi-compact approximation,  combining  (\ref{2.2.1}) and (\ref{2.2.2}), we need to eliminate the low order terms corresponding to $h^k$ ($k=1, 2, 3, 4$), which can be done by solving the algebraic equation
\begin{equation}\label{fifcoef5}
\left\{
\begin{array}{lc}
\displaystyle \mu_1+ \mu_0 +\mu_{-1}-\gamma_1-\gamma_2=1, \\\\
\displaystyle \mu_1a_{1,1}^\alpha +\mu_0a_{0,1}^\alpha +\mu_{-1}a_{-1,1}^\alpha+\gamma_1-\gamma_2 =0 ,\\\\
\displaystyle \mu_1a_{1,2}^\alpha +\mu_0a_{0,2}^\alpha +\mu_{-1}a_{-1,2}^\alpha-\gamma_1/2-\gamma_2/2 =0,\\\\
\displaystyle \mu_1a_{1,3}^\alpha +\mu_0a_{0,3}^\alpha +\mu_{-1}a_{-1,3}^\alpha +\gamma_1/3!-\gamma_2/3!=0 ,\\\\
\displaystyle \mu_1a_{1,4}^\alpha +\mu_0a_{0,4}^\alpha +\mu_{-1}a_{-1,4}^\alpha -\gamma_1/4!-\gamma_2/4!=0.
\end{array}
\right.
\end{equation}
Eq. (\ref{fifcoef}) is the solution of (\ref{fifcoef5}).
Then we get Theorem  \ref{th2.2}. Next we utilize the 5-CWSGD operator to solve Example  \ref{ex2}; and the numerical results are presented in Table \ref{tab2}, from which the accuracy of the 5-CWSGD operator is verified.

\begin{example}\label{ex2}
We again consider the steady state fractional diffusion problem simulated in Example \ref{exforth1}, i.e.,
\[ _0D_x^\alpha u(x)=\frac{720 x^{6 -  \alpha}}{\Gamma(7 - \alpha)} ,  \quad x\in(0,1),\]
with  $1<\alpha<2$  and the boundary conditions $u(0)=0$, $u(1)=1$; and the exact solution $u(x)=x^{6}$.
\end{example}
\begin{table}[htbp]
\centering\small
\caption{Numerical errors and convergence rates in $L_\infty$ norm and $L_2$ norm of scheme (\ref{fif}) to solve Example \ref{ex2}, where $U$ denotes  the numerical solution and   $h_x$ is space step size.}\vspace{1em}  {\begin{tabular}{@{}cccccc@{}} \hline
 $\alpha$ & $h_x$ & $\|u-U\|_2$ &rate & $\|u-U\|_\infty$ &rate  \\
\hline
1.1&$1/8$ & $   2.3456e-05   $ &   $         $ & $  5.2058e-05  $ &   $ $  \\
&$1/16 $&  $    6.8783e-07  $ &   $   5.0918  $  &  $   1.6758e-06  $ &   $  4.9572   $  \\
&$ 1/32$ &  $  2.0903e-08   $  &   $  5.0403   $   & $  5.3410e-08   $  &    $ 4.9716     $  \\
&$1/ 64$ &  $  6.4355e-10   $  &   $ 5.0215   $   &  $  1.6852e-09   $  &    $ 4.9861      $  \\
&$1/128$ &  $   1.9956e-11  $  &   $     5.0112  $  &  $   5.2916e-11  $  &   $  4.9931   $ \\
 \hline
1.5&$1/8$ & $    9.0595e-06   $ &   $          $ & $  1.9904e-05   $  &  $  $   \\
&$1/16 $&  $ 2.8200e-07  $ &   $  5.0057   $    &  $    6.7018e-07  $ &   $   4.8924     $  \\
&$ 1/32$ &  $  8.9299e-09   $  &  $  4.9809    $    &  $   2.2033e-08   $  &   $  4.9268    $  \\
&$ 1/64$ &  $   2.8313e-10    $  &   $ 4.9791   $   &  $  7.1095e-10   $  &    $ 4.9538      $  \\
&$1/128$ &  $   8.9603e-12    $  &   $   4.9818   $   &  $ 2.2661e-11     $  &   $  4.9714     $ \\
\hline
\end{tabular}}\label{tab2}
\end{table}

\begin{rem}
As the fifth order quasi-compact scheme is not stable in solving the time-dependent space fractional differential equation, we detailedly discuss the fourth order quasi-compact schemes in Sections 3 and 4.
\end{rem}


\section{Quasi-compact scheme for one dimensional space fractional diffusion equation}\label{sec1}
Based on the fourth order quasi-compact discretization to the Riemann-Liouville space fractional derivative,
we develop the Crank-Nicolson quasi-compact scheme of the two-sided space fractional diffusion equations.
Here we consider the initial boundary value problem of the space fractional diffusion equation
\begin{equation}\label{equ1}
\left\{
\begin{array}{lll}
\displaystyle \frac{\partial u(x,t)}{\partial t} =K_1\, _aD_x^\alpha u(x,t)+K_2\,_xD_b^\alpha u(x,t)+f(x,t),  &(x,t)\in (a,b)\times(0,T], \\\\
\displaystyle  u(x,0)=u_0(x),&x\in[a,b],\\\\
\displaystyle   u(a,t)=\phi_a(t),\,\,u(b,t)=\phi_b(t),& t\in[0,T],
\end{array}
\right.
\end{equation}
where $1<\alpha\leq2$. The diffusion coefficients $K_1$ and $K_2$ are nonnegative constants and they satisfy $K_1^2+ K_2^2\neq0$.
 If $K_1\neq0$, then $\phi_a(t)\equiv0$ and $K_2\neq 0$, then $\phi_b(t)\equiv0$.  
 In the following analysis of the numerical method, we suppose that (\ref{equ1}) has an unique and sufficiently smooth solution.

\subsection{CN-CWSGD scheme}
  The time interval $[0,T]$ is partitioned  into a uniform mesh with the step size $\tau =T/N$
 and  the space interval $[a,b]$ into another uniform mesh with the step sized $h=(b-a)/M$, where $N, M$ are two positive integers.
 Then the set of grid points can be  denoted by $x_j=a+jh$ $(0\leq j\leq M)$ and $t_n=n\tau$ $(0\leq n\leq N)$.
 Let $u_j^n=u(x_j,t_n)$,  $t_{n+1/2}=(t_n+t_{n+1})/2$, and $f^{n+1/2}_j=f(x_j,t_{n+1/2})$ for $0\leq n\leq N-1$.
 The maximum norm and the discrete $L_2$ norm are defined as
\begin{equation}
\|u\|_\infty=\max\limits_{1\leq j\leq M-1}|u_j|,\quad \|u\|^2= h\sum_{j=1}^{M-1}u_j^2.
\end{equation}
We use the Crank-Nicolson technique for the time discretization of (\ref{equ1}) and get
\begin{equation}
\begin{array}{lll}
\displaystyle
\frac{u_j^{n+1}-u_j^n}{\tau}=\frac{1}{2}\left(K_1(_aD_x^\alpha u)_j^n + K_1(_aD_x^\alpha u)_j^{n+1}  +K_2(_xD_b^\alpha u)_j^n + K_2(_xD_b^\alpha u)_j^{n+1} \right) \\\\\displaystyle~~~~~~~~~~~~~~~~
+f^{n+1/2}_j+O(\tau^2).
\end{array}
\end{equation}
In space, the fourth order quasi-compact discretizations are used to approximate the Riemann-Liouville fractional derivatives. This implies that
\begin{equation}
\begin{array}{l }
\displaystyle
P_x\frac{u_j^{n+1}-u_j^n}{\tau}
=\frac{K_1\tau}{2 } \,_{L}D_{h}^\alpha u_j^{n} +\frac{K_2\tau}{2 } \,_{R}D_{h}^\alpha u_j^{n}+
\frac{K_1\tau}{2 } \,_{L}D_{h}^\alpha u_j^{n+1} +\frac{K_2\tau}{2 } \,_{R}D_{h}^\alpha u_j^{n+1}\\\\\displaystyle~~~~~~~~~~~~~~~~~~~~
+ P_x f^{n+1/2}_j  + R^{n+1/2}_j,
\end{array}
\end{equation}
where
$$\,_{L}D_{h}^\alpha   u_{j }^{n} =: \mu_1 \tilde{\Delta}_1^\alpha u_{j }^{n}+ \mu_0 \tilde{\Delta}_0^\alpha u_j^{n}+ \mu_{-1} \tilde{\Delta}_{-1}^\alpha u_j^{n}=\frac{1}{h^\alpha}\sum_{k=0}^{j+1}w_{k}^{(\alpha)}u_{j-k+1}^{n},$$
 $$\,_{R}D_{h}^\alpha u_j^{n}=:\mu_1 \tilde{\Lambda}_1^\alpha u_j^{n}+ \mu_0 \tilde{\Lambda}_0^\alpha u_j^{n}+ \mu_{-1} \tilde{\Lambda}_{-1}^\alpha u_j^{n}=\frac{1}{h^\alpha}\sum_{k=0}^{M-j+1}w_{k}^{(\alpha)}u_{j+k-1}^{n},$$
the coefficients $w_0^{(\alpha)}=\mu_1g_0^{(\alpha)}$,  $w_1^{(\alpha)}=\mu_0g_0^{(\alpha)}+\mu_1g_1^{(\alpha)}$, and $w_k^{(\alpha)}=\mu_1g_k^{(\alpha)}+\mu_0g_{k-1}^{(\alpha)}+\mu_{-1}g_{k-2}^{(\alpha)}$, $k=2,\cdots, M$
  and   $R^{n+1/2}_j\leq C(\tau^2+h^4) $.
Then the above equation can be rewritten as
\begin{equation}\label{fore}
\begin{array}{lll}
\displaystyle
 P_xu_j^{n+1}-\frac{K_1\tau}{2 } \,_{L}D_{h}^\alpha u_j^{n+1} -\frac{K_2\tau}{2 } \,_{R}D_{h}^\alpha u_j^{n+1}\\\\\displaystyle
=P_xu_j^{n}+\frac{K_1\tau}{2 } \,_{L}D_{h}^\alpha u_j^{n} +\frac{K_2\tau}{2 } \,_{R}D_{h}^\alpha u_j^{n}
+\tau P_x f_j^{n+1/2}+\tau R^{n+1/2}_j.
\end{array}
\end{equation}
Denoting $U_j^n$ as the numerical approximation of $u_j^n$, we obtain the Crank-Nicolson quasi-compact scheme for (\ref{equ1})
\begin{equation}\label{fourthccd}
\begin{array}{lll}
\displaystyle
 P_xU_j^{n+1}-\frac{K_1\tau}{2 } \,_{L}D_{h}^\alpha U_j^{n+1} -\frac{K_2\tau}{2 } \,_{R}D_{h}^\alpha U_j^{n+1} \\\\\displaystyle
=P_xU_j^{n}+\frac{K_1\tau}{2 } \,_{L}D_{h}^\alpha U_j^{n} +\frac{K_2\tau}{2 } \,_{R}D_{h}^\alpha U_j^{n}
+\tau P_x f_j^{n+1/2}.
\end{array}
\end{equation}
For convenience, the approximation scheme (\ref{fourthccd}) can be written in matrix form
\begin{equation}\label{matrixfourth}
 (P_\alpha-B_\alpha)U^{n+1}=(P_\alpha+B_\alpha)U^{n}+\tau F^n+H^n,
 \end{equation}
 where  $B_\alpha=\frac{\tau}{2h^\alpha}(K_1A_\alpha+K_2A_\alpha^T)$,
 $
U^n=(U^n_1,U^n_2,\cdots ,U^n_{M-1})^T,\quad F^n=(f^{n+1/2}_1,f^{n+1/2}_2,\cdots ,f^{n+1/2}_{M-1})^T
$,
  $A_\alpha$ is given by
\begin{equation}\label{mA}     
\displaystyle A_\alpha=\left(                 
  \begin{array}{ccccc}   
    w_1^{(\alpha)}    & w_0^{(\alpha)}  &                 &   &    \\  
   w_2^{(\alpha)}    & w_1^{(\alpha)} &  w_0^{(\alpha)}   &    &     \\  
       \vdots      &     w_2^{(\alpha)} &  w_1^{(\alpha)}    &    & \\  
    w_{M-2}^{(\alpha)} &  \cdots      &\ddots   & \ddots   & w_0^{(\alpha)}   \\  
    w_{M-1}^{(\alpha)}&w_{M-2}^{(\alpha)} & \cdots  & w_2^{(\alpha)}  & w_1^{(\alpha)} \\  
  \end{array}
\right),                 
\end{equation}
\[       
\displaystyle P_\alpha=\left(                 
  \begin{array}{ccccc}   
    1-2b_2^\alpha     & b_2^\alpha   &     &   &    \\  
   b_2^\alpha   & (1-2b_2^\alpha  ) & b_2^\alpha   &    &     \\  
          &      &\cdots &    &     \\  
          &      &b_2^\alpha   & 1-2b_2^\alpha   & b_2^\alpha   \\  
          &    &   & b_2^\alpha   & 1-2b_2^\alpha   \\  
  \end{array}
\right)                 
\]
and
\begin{equation}      
\begin{array}{lll}
H^n&=&\left(                 
 \begin{array}{c} 
     b_2^\alpha      \\  
       0   \\  
       \vdots        \\  
        0      \\  
  \end{array}
\right)
(U^n_0-U^{n+1}_0)+\frac{\tau}{2h^\alpha}\left(                 
  \begin{array}{c}   
   K_1w_2^{(\alpha)}+  K_2w_0^{(\alpha) }  \\  
        K_1w_3^{(\alpha)}   \\  
       \vdots        \\  
        K_1w_{M-1}^{(\alpha) }    \\  
         K_1w_{M}^{(\alpha) }   \\  
  \end{array}
\right)(U^n_0+U^{n+1}_0)\\\\
 && \displaystyle +              
\left(                 
  \begin{array}{c}   
     0    \\  
       \vdots   \\  
       0        \\  
        b_2^\alpha       \\  
  \end{array}
\right)
(U^n_M-U^{n+1}_M)+\frac{\tau}{2h^\alpha}\left(                 
  \begin{array}{c}   
    K_2w_{M}^{(\alpha)}  \\  
     K_2w_{M-1}^{(\alpha) } \\  
       \vdots        \\  
     K_2w_{3}^{(\alpha) } \\  
      K_1w_ 0^{(\alpha)} +K_2w_{2}^{(\alpha)}     \\  
  \end{array}
\right)(U^n_M+U^{n+1}_M).
 \end{array}
\end{equation}

%
%

\subsection{Stability and convergence analysis}

In this subsection, we prove that the CN quasi-compact scheme has fourth order accuracy in space and is unconditionally stable. Now we give some important lemmas to be used in the analyses.

\begin{lem} (\cite{chan2007introduction})\label{sy1}
 Let $H$ be a Toeplitz matrix with a generating function $f\in C_{2\pi}$. Let $ \lambda_{ \min}(H)$ and $ \lambda_{ \max}(H)$  denote the smallest and
largest eigenvalues of  $H$, respectively. Then we have
\[ f_{ \min}\leq\lambda_{ \min}(H)\leq\lambda_{ \max}(H)\leq f_{ \max},\]
where $f_{ \min}$ and $f_{ \max} $ denote the minimum and maximum values of $f(x)$, respectively.
In particular, if $f_{ \max}\leq0$ and $f_{\min} \neq f_{ \max}$, then $H $ is negative definite.
\end{lem}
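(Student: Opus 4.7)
The plan is to realize the quadratic form $u^{*}Hu$ as an integral against the generating function $f$, which directly couples the spectrum of the Hermitian Toeplitz matrix $H$ to the range of $f$. I would start from the Fourier representation of the Toeplitz entries, $t_{k}=\frac{1}{2\pi}\int_{-\pi}^{\pi}f(\theta)e^{-ik\theta}\,d\theta$, and for any nonzero vector $u=(u_{1},\ldots,u_{n})^{T}$ interchange sum and integral to obtain
$$
u^{*}Hu=\sum_{j,k}\bar{u}_{j}\,t_{j-k}\,u_{k}=\frac{1}{2\pi}\int_{-\pi}^{\pi}f(\theta)\,|q(\theta)|^{2}\,d\theta,
$$
where $q(\theta)=\sum_{k=1}^{n}u_{k}e^{ik\theta}$ is the trigonometric polynomial attached to $u$. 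By Parseval's identity, $\frac{1}{2\pi}\int_{-\pi}^{\pi}|q(\theta)|^{2}\,d\theta=\|u\|_{2}^{2}$, so the weight has mass $\|u\|^{2}$.

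Next I would bound $f$ pointwise by its extrema inside the integral, giving
$$
f_{\min}\|u\|^{2}\leq u^{*}Hu\leq f_{\max}\|u\|^{2},
$$
and then divide through to get the Rayleigh quotient bounds $f_{\min}\leq \frac{u^{*}Hu}{u^{*}u}\leq f_{\max}$. Taking the infimum and supremum over nonzero $u$ and invoking the Rayleigh--Ritz characterization of the extreme eigenvalues of the Hermitian matrix $H$ yields the desired inequality chain $f_{\min}\leq\lambda_{\min}(H)\leq\lambda_{\max}(H)\leq f_{\max}$.

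For the negative definiteness claim under $f_{\max}\leq 0$ and $f_{\min}\neq f_{\max}$, the bound already gives $u^{*}Hu\leq 0$, and the remaining task is to promote this to a strict inequality whenever $u\neq 0$. This is where the continuity assumption $f\in C_{2\pi}$ is essential: since $f_{\min}<f_{\max}\leq 0$, there exists an open arc $I\subset[-\pi,\pi]$ on which $f(\theta)<0$. On $I$ the integrand $f(\theta)|q(\theta)|^{2}$ is nonpositive, so $u^{*}Hu=0$ would force $q$ to vanish identically on $I$. But a nonzero trigonometric polynomial of degree at most $n$ has only finitely many zeros in $[-\pi,\pi]$, so $q\equiv 0$ on $I$ implies $q\equiv 0$, hence $u=0$ by uniqueness of Fourier coefficients. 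Thus $u^{*}Hu<0$ for every nonzero $u$.

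The main obstacle is exactly this last step: the inequality and Rayleigh parts are essentially Parseval plus a pointwise estimate, but the strict negative definiteness requires a measure-theoretic or analytic argument tying the support structure of $f$ (through its continuity and non-constancy) to the zero set of a trigonometric polynomial. That is the only place where the hypothesis $f_{\min}\neq f_{\max}$ is actually invoked, and it is what prevents a purely algebraic proof.
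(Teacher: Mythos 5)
Your proof is correct and follows essentially the standard argument behind this cited lemma; the paper itself offers no proof, simply quoting the result from \cite{chan2007introduction}. The integral representation $u^{*}Hu=\frac{1}{2\pi}\int_{-\pi}^{\pi}f(\theta)|q(\theta)|^{2}\,d\theta$, Parseval, the Rayleigh--Ritz characterization, and the observation that a nonzero trigonometric polynomial cannot vanish on an arc (which is exactly where $f_{\min}\neq f_{\max}$ and continuity enter to upgrade $u^{*}Hu\leq 0$ to strict negativity) are precisely the ingredients of the textbook proof.
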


\begin{lem}(\cite{bhatia2009positive})\label{le3.3}
Let $A$ be a positive semi-definite matrix. Then there exists a unique $n$-square positive semi-definite matrix $B$ such that $B^2=A$. Such a matrix $B$ is called the square root of $A$, denoted by $A^\frac{1}{2}$.
\end{lem}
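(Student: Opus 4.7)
The plan is to prove existence via the spectral theorem and uniqueness via a commutativity argument that reduces the problem to each eigenspace of $A$.

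For existence, since $A$ is positive semi-definite it is Hermitian, so by the spectral theorem there is a unitary $U$ such that $A = U\Lambda U^{*}$ with $\Lambda = \operatorname{diag}(\lambda_{1},\ldots,\lambda_{n})$ and every $\lambda_{i} \geq 0$. I would define
\begin{equation*}
B := U\,\operatorname{diag}(\sqrt{\lambda_{1}},\ldots,\sqrt{\lambda_{n}})\,U^{*}.
\end{equation*}
A direct computation shows $B$ is Hermitian, has non-negative spectrum (hence is positive semi-definite), and $B^{2} = U\Lambda U^{*} = A$. This part is routine once the spectral theorem is in hand.

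For uniqueness, suppose $C$ is any positive semi-definite matrix with $C^{2} = A$. The crucial observation is that $CA = C\cdot C^{2} = C^{2}\cdot C = AC$, so $C$ commutes with $A$. Consequently $C$ leaves every eigenspace $E_{\lambda}$ of $A$ invariant. On $E_{\lambda}$ the restriction $C|_{E_{\lambda}}$ is still Hermitian and positive semi-definite, and satisfies $(C|_{E_{\lambda}})^{2} = \lambda I_{E_{\lambda}}$. Diagonalizing $C|_{E_{\lambda}}$ by its own spectral decomposition and using that its eigenvalues are non-negative and square to $\lambda$, each such eigenvalue must equal $\sqrt{\lambda}$; hence $C|_{E_{\lambda}} = \sqrt{\lambda}\,I_{E_{\lambda}} = B|_{E_{\lambda}}$. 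Gluing over all eigenspaces of $A$ yields $C = B$.

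The main obstacle is really just the uniqueness half, and within that the small lemma that a positive semi-definite matrix whose square is $\lambda I$ must equal $\sqrt{\lambda}\,I$. Nothing deeper is required, but one has to resist the temptation to quote ``take the square root'' circularly; the correct way is to diagonalize $C|_{E_{\lambda}}$ and use non-negativity of its eigenvalues together with the polynomial identity $t^{2} = \lambda$. Once this is in place, the rest of the argument is bookkeeping on the orthogonal decomposition $\mathbb{C}^{n} = \bigoplus_{\lambda}E_{\lambda}$.
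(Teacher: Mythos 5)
The paper states this lemma as a known result, citing Bhatia's \emph{Positive Definite Matrices}, and supplies no proof of its own, so there is no internal argument to compare against. Your proof is correct and is the standard one: existence via the spectral theorem, and uniqueness by noting that any positive semi-definite $C$ with $C^{2}=A$ commutes with $A$, hence preserves each eigenspace $E_{\lambda}$, where non-negativity of the eigenvalues of $C|_{E_{\lambda}}$ together with $(C|_{E_{\lambda}})^{2}=\lambda I$ forces $C|_{E_{\lambda}}=\sqrt{\lambda}\,I=B|_{E_{\lambda}}$.
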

\begin{thm}\label{ka}
The matrix  $A_\alpha+A_\alpha^T$  is negative definite, and $B_\alpha+B_\alpha^T$ is also  negative definite, where  $A_\alpha$ is given by (\ref{mA}) and  $B_\alpha$ defined in (\ref{matrixfourth}).
\end{thm}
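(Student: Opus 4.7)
The plan is to reduce the second claim to the first and then treat $A_\alpha+A_\alpha^T$ through the Toeplitz-symbol framework provided by Lemma~\ref{sy1}. Since $B_\alpha=\frac{\tau}{2h^\alpha}(K_1A_\alpha+K_2A_\alpha^T)$, transposing gives $B_\alpha+B_\alpha^T=\frac{\tau(K_1+K_2)}{2h^\alpha}(A_\alpha+A_\alpha^T)$, and because $K_1,K_2\ge 0$ with $K_1^2+K_2^2\neq 0$ the scalar prefactor is strictly positive. Hence negative definiteness transfers directly from $A_\alpha+A_\alpha^T$ to $B_\alpha+B_\alpha^T$, and the whole theorem reduces to the first statement.

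For the first statement, I first observe that $A_\alpha$ is a lower-Hessenberg Toeplitz matrix: its diagonal entry is $w_1^{(\alpha)}$, its single superdiagonal is $w_0^{(\alpha)}$, and the $k$th subdiagonal is $w_{k+1}^{(\alpha)}$. The convolution structure $w_k^{(\alpha)}=\mu_1g_k^{(\alpha)}+\mu_0g_{k-1}^{(\alpha)}+\mu_{-1}g_{k-2}^{(\alpha)}$ (with the obvious conventions at $k=0,1$) combined with $\sum_{k\ge 0}g_k^{(\alpha)}z^k=(1-z)^\alpha$ yields
\[
\sum_{k=0}^{\infty}w_k^{(\alpha)}z^k=(\mu_1+\mu_0z+\mu_{-1}z^2)(1-z)^\alpha,
\]
so the generating function of $A_\alpha$ in the sense of Lemma~\ref{sy1} is
\[
f(\theta)=e^{-i\theta}\bigl(\mu_1+\mu_0e^{i\theta}+\mu_{-1}e^{2i\theta}\bigr)(1-e^{i\theta})^\alpha,\qquad\theta\in[-\pi,\pi].
\]
Because $A_\alpha$ is real, $A_\alpha^T$ has symbol $\overline{f(\theta)}$ and the symbol of $A_\alpha+A_\alpha^T$ is $g(\theta)=2\operatorname{Re}f(\theta)$. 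By Lemma~\ref{sy1} it therefore suffices to prove that $g_{\max}\le 0$ on $[-\pi,\pi]$ and that $g$ is non-constant.

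The substantive step, and the main obstacle, is establishing the pointwise inequality $\operatorname{Re}f(\theta)\le 0$ uniformly in $\theta$ for every admissible $\alpha$. Using $1-e^{i\theta}=2\sin(\theta/2)\,e^{i(\theta-\pi)/2}$ for $\theta\in(0,2\pi)$, I write $(1-e^{i\theta})^\alpha=(2\sin(\theta/2))^\alpha e^{i\alpha(\theta-\pi)/2}$, and together with the easily checked identities $\mu_1+\mu_{-1}=(2+\alpha^2)/6$, $\mu_1-\mu_{-1}=\alpha/2$, $\mu_0=(4-\alpha^2)/6$, I expand $g(\theta)$ as $(2\sin(\theta/2))^\alpha$ times an explicit real trigonometric expression. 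Since $(2\sin(\theta/2))^\alpha\ge 0$, the task reduces to showing that the bracketed factor, a linear combination of $\cos\bigl(\alpha(\theta-\pi)/2\pm\theta\bigr)$ and $\cos(\alpha(\theta-\pi)/2)$ with coefficients depending polynomially on $\alpha$, is nonpositive for all $\theta\in[0,\pi]$ and all $\alpha\in(1,2]$; the case $\theta\in[-\pi,0)$ follows by evenness of $g$. The expected strategy is to regroup the cosines using the sum-to-product identities to isolate a factor of $\cos(\theta/2)^2$ or $\sin(\theta/2)^2$ whose sign dominates, reducing the problem to an elementary inequality in $\alpha$ alone. Non-constancy of $g$ is immediate from $g(0)=0$ and $g(\pi)\neq 0$, so Lemma~\ref{sy1} yields $A_\alpha+A_\alpha^T$ negative definite, and the first-paragraph reduction then finishes the proof.
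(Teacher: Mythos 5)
Your overall architecture coincides with the paper's: you reduce $B_\alpha+B_\alpha^T=\frac{\tau(K_1+K_2)}{2h^\alpha}(A_\alpha+A_\alpha^T)$ exactly as the paper does, you compute the Toeplitz symbol of $A_\alpha$ via the convolution identity $\sum_{k\ge 0} w_k^{(\alpha)}z^k=(\mu_1+\mu_0 z+\mu_{-1}z^2)(1-z)^\alpha$ (the paper's display (\ref{GeneratFunc}) is the same computation written out term by term), and you invoke Lemma~\ref{sy1}. Your identities $\mu_1+\mu_{-1}=(2+\alpha^2)/6$, $\mu_1-\mu_{-1}=\alpha/2$, $\mu_0=(4-\alpha^2)/6$ and your non-constancy check ($g(0)=0$, $g(\pi)=2^{\alpha+1}(1-\alpha^2)/3\neq 0$) are all correct.

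The gap is precisely the step you yourself flag as ``the substantive step'': you never actually establish $\operatorname{Re}f(\theta)\le 0$. The sum-to-product regrouping you propose yields
\[
\tfrac{2+\alpha^2}{6}\cos\phi\,\cos\theta+\tfrac{4-\alpha^2}{6}\cos\phi-\tfrac{\alpha}{2}\sin\phi\,\sin\theta,
\qquad \phi=\tfrac{\alpha(\pi-\theta)}{2},
\]
and while the last term is manifestly nonpositive on $[0,\pi]$, both $\cos\phi$ and $\tfrac{2+\alpha^2}{6}\cos\theta+\tfrac{4-\alpha^2}{6}$ change sign in the interior of $[0,\pi]$ when $\alpha\in(1,2)$, so no single factor of $\sin^2(\theta/2)$ or $\cos^2(\theta/2)$ dominates in the way your sketch anticipates; the nonpositivity is genuinely delicate and does not fall out of an ``elementary inequality in $\alpha$ alone'' without a careful case analysis. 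You should be aware that the paper does not close this step analytically either: it asserts $f(\alpha;x)\le 0$ by appeal to a plot of the symbol (Fig.~\ref{fig1}). So your proposal follows the paper's route but, as written, is strictly weaker than the paper's argument, since it replaces even the graphical verification with an unexecuted strategy. To make this a complete proof you would need to carry out the sign analysis of the bracketed expression in full (or supply the numerical verification the paper relies on).
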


In fact, the generating function \cite{chan2007introduction} of $A+A^T$ satisfies
\begin{equation}\label{GeneratFunc}
\begin{array}{lll}
\displaystyle~~~~
f(\alpha,x)\\
\displaystyle= f_{A_\alpha}(x)+f_{A_\alpha^T}(x)=\left(\sum_{k=0}^{\infty}w_{k}^{(\alpha)} e^{-i(k-1)x }+\sum_{k=0}^{\infty}w_{k}^{(\alpha)} e^{i(k-1)x}\right)\\\\
\displaystyle = \mu_1 \left( \sum_{k=0}^{\infty}g_k^{(\alpha)}e^{-i(k-1)\sigma }+ \sum_{k=0}^{\infty}g_k^{(\alpha)}e^{ i(k-1)\sigma }\right)+ \mu_0 \left( \sum_{k=0}^{\infty}g_k^{(\alpha)}e^{-ik\sigma }+ \sum_{k=0}^{\infty}g_k^{(\alpha)}e^{ ik\sigma }\right)\\\\
\displaystyle  ~~~~+ \mu_{-1} \left( \sum_{k=0}^{\infty}g_k^{(\alpha)}e^{-i(k+1)\sigma }+ \sum_{k=0}^{\infty}g_k^{(\alpha)}e^{ i(k+1)\sigma }\right)\\\\
\displaystyle   = \mu_1  ( (1-e^{-i\sigma })^\alpha e^{i\sigma }+ (1-e^{ i\sigma })^\alpha e^{-i\sigma })+ \mu_0 (  (1-e^{-i\sigma })^\alpha + (1-e^{ i\sigma })^\alpha)\\\\
\displaystyle  ~~~~+ \mu_{-1}  ( (1-e^{-i\sigma })^\alpha e^{-i\sigma } + (1-e^{ i\sigma })^\alpha e^{ i\sigma } )\\\\
 \displaystyle
= (2\sin(\frac{\sigma }{2}))^\alpha(\mu_1(  e^{i(\frac{\alpha\pi}{2}-\frac{\alpha\sigma }{2}+\sigma  )}+  e^{-i(\frac{\alpha\pi}{2}-\frac{\alpha\sigma }{2}+\sigma  )}) +  \mu_0(  e^{i(\frac{\alpha\pi}{2}-\frac{\alpha\sigma }{2})} +  e^{-i(\frac{\alpha\pi}{2}-\frac{\alpha\sigma }{2})} )\\\\
\displaystyle  ~~~~+  \mu_{-1}(   e^{i(\frac{\alpha\pi}{2}-\frac{\alpha\sigma }{2}-\sigma   )}+   e^{-i(\frac{\alpha\pi}{2}-\frac{\alpha\sigma }{2}-\sigma   )}))\\\\
 \displaystyle   =2\left(2\sin(\frac{x }{2})\right)^\alpha \left( \mu_1 \cos(\frac{\alpha\pi}{2}-\frac{\alpha x}{2}+ x )+
   \mu_0 \cos(\frac{\alpha\pi}{2}-\frac{\alpha x }{2})+\mu_{-1} \cos(\frac{\alpha\pi}{2}-\frac{\alpha x }{2}-x )\right),
\end{array}
\end{equation}
where $f_{A_\alpha}(x)$ and $f_{A_\alpha^T}(x)$ denote the generating functions of the matrix $A_\alpha$ and $A_\alpha^T$, respectively.
Since $f(\alpha;x)$ is a real-valued and even function,  it's reasonable to consider its principal value on $[0,\pi]$.
\begin{figure}
  \centering
  \includegraphics[width=0.6\textwidth ]{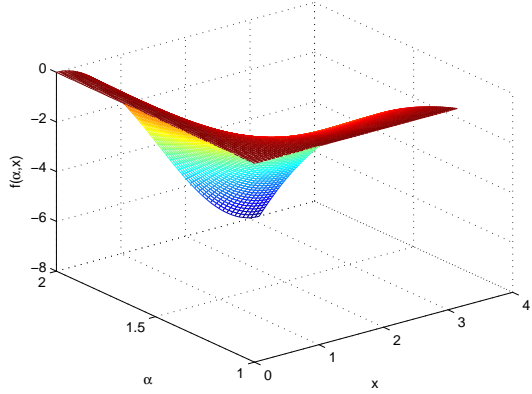}\\
  \caption{$f(\alpha;x)$ defined by (\ref{GeneratFunc}) for  $1 \le \alpha \le 2$ on $x\in[0,\pi]$. }\label{fig1}
\end{figure}
Together with  Fig. \ref{fig1},  we have that $f(\alpha;x)\leq0$ for $1\leq\alpha\leq2$ on   $[-\pi,\pi]$.
Then from Lemma \ref{sy1}, we know the matrix  $A_\alpha+A_\alpha^T$  is negative definite.
Rewriting  $B_\alpha+B_\alpha^T$ as $\frac{\tau}{2h^\alpha}(K_1(A_\alpha+A_\alpha^T)+K_2(A_\alpha^T+A_\alpha))$, 
it can be clearly seen that $B_\alpha+B_\alpha^T$ is negative definite.

\begin{thm} \label{stability1d}
The difference scheme (\ref{fourthccd}) with $ \alpha\in(1,2)$ is unconditionally stable.
\end{thm}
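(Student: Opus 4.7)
The plan is to reduce the stability question to a bound on an amplification matrix of the standard Crank--Nicolson form $(I-M)^{-1}(I+M)$, where $M$ has negative-definite symmetric part. Rewriting \eqref{matrixfourth} (ignoring the boundary/source contribution, which is handled by a standard perturbation argument) as
\begin{equation*}
U^{n+1} = (P_\alpha-B_\alpha)^{-1}(P_\alpha+B_\alpha)\,U^n + \text{(lower order terms)},
\end{equation*}
I will show that the amplification matrix is a contraction in a suitable energy norm.

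First I would verify that $P_\alpha$ is symmetric positive definite for every $\alpha\in(1,2)$. This is a short computation: $P_\alpha$ is symmetric tridiagonal with diagonal entry $1-2b_2^\alpha$ and off-diagonal entry $b_2^\alpha$, and since $b_2^\alpha=(4+\alpha-\alpha^2)/24\in(1/12,1/6)$ on $(1,2)$, a Gershgorin estimate gives eigenvalues in $[1-4b_2^\alpha,1]\subset(1/3,1]$. Invoking Lemma~\ref{le3.3}, the unique SPD square root $P_\alpha^{1/2}$ exists, and I can introduce the change of variable $\widetilde U^n:=P_\alpha^{1/2}U^n$ so that the scheme becomes
\begin{equation*}
(I-M)\,\widetilde U^{n+1} = (I+M)\,\widetilde U^n + \text{(lower order terms)}, \qquad M:=P_\alpha^{-1/2}B_\alpha P_\alpha^{-1/2}.
\end{equation*}

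Next I would use the central fact from Theorem~\ref{ka}: $B_\alpha+B_\alpha^T$ is negative definite. Conjugation by the SPD matrix $P_\alpha^{-1/2}$ preserves this, so $M+M^T=P_\alpha^{-1/2}(B_\alpha+B_\alpha^T)P_\alpha^{-1/2}$ is negative definite as well. In particular $I-M$ is invertible. The classical Crank--Nicolson contractivity argument then applies: for any vector $v$ set $w:=(I-M)^{-1}(I+M)v$, so that $w-v=M(w+v)$; taking the Euclidean inner product with $w+v$ yields
\begin{equation*}
\|w\|_2^2-\|v\|_2^2 \;=\; (w+v)^T M\,(w+v) \;=\; \tfrac12 (w+v)^T(M+M^T)(w+v)\;\le\;0,
\end{equation*}
so $\|(I-M)^{-1}(I+M)\|_2\le 1$. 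Undoing the change of variable gives $\|U^{n+1}\|_{P_\alpha}\le\|U^n\|_{P_\alpha}$ in the energy norm $\|v\|_{P_\alpha}:=\sqrt{v^T P_\alpha v}$, which is equivalent to the discrete $L_2$ norm because the spectrum of $P_\alpha$ is bounded away from $0$ and $\infty$ uniformly in $h$. This yields unconditional stability independently of $\tau$ and $h$.

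The main obstacle is really the non-symmetry of $B_\alpha$: one cannot diagonalize it, so a direct eigenvalue bound on $(P_\alpha-B_\alpha)^{-1}(P_\alpha+B_\alpha)$ does not work. The device that makes the proof go through is the simultaneous use of (i) the SPD matrix $P_\alpha^{1/2}$ to turn the preconditioner into the identity and (ii) the negative definiteness of the symmetric part $B_\alpha+B_\alpha^T$ inherited from Theorem~\ref{ka}, since these two facts together reduce everything to the $\|(I-M)^{-1}(I+M)\|_2\le 1$ Crank--Nicolson identity. Beyond that, the only routine items to check are that the boundary vector $H^n$ and the source $\tau F^n$ do not spoil the bound, which follows from the invertibility of $I-M$ and a standard Gronwall-type summation.
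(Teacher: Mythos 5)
Your proof is correct, but it takes a genuinely different route from the paper. The paper proves Theorem \ref{stability1d} by von Neumann analysis: it zero-extends the round-off error equation to the whole line, inserts $\epsilon_j^n=v^ne^{ij\sigma}$, writes the amplification factor as $(Q_1+Q_2)/(Q_1-Q_2)$ with $Q_1=1-4b_2^\alpha\sin^2(\sigma/2)>0$, and shows $\mathrm{Re}(Q_2)=\frac{(K_1+K_2)\tau}{4h^\alpha}f(\alpha;\sigma)\le 0$ using the same generating function $f$ that underlies Theorem \ref{ka}. You instead work directly with the finite-dimensional matrix form, symmetrize by $P_\alpha^{-1/2}$, and invoke the classical Crank--Nicolson contraction identity $\|(I-M)^{-1}(I+M)\|_2\le 1$ for $M$ with negative-definite symmetric part; all the individual steps check out (the Gershgorin bound on $P_\alpha$, the congruence argument for $M+M^T$, the identity $w-v=M(w+v)$, and the $h$-uniform norm equivalence since $\mathrm{spec}(P_\alpha)\subset(1/3,1]$). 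Two remarks on the comparison. First, your energy argument is essentially the machinery the paper itself deploys not for stability but for \emph{convergence} (Theorem \ref{convergence}), where $(P_\alpha^{-1})^{1/2}$ and the inequalities (\ref{2})--(\ref{3}) play exactly the role of your change of variables; so your proof in effect unifies the two theorems, and it has the advantage of treating the bounded-domain problem with its boundary conditions directly rather than via zero extension to $\mathbb{R}$. Second, be aware that the negativity you import from Theorem \ref{ka} is established in the paper only by inspecting the plot of $f(\alpha;x)$ in Fig.~\ref{fig1}, so your proof inherits the same (graphically verified) foundation as the paper's Fourier argument rather than being more rigorous at the root.
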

\begin{proof}
Define the round-off error  as $\epsilon_j^n=U_j^n-\tilde{U}_j^n$,
where $\tilde{U}_j^n$ is the exact solution of the discretized equation (\ref{fourthccd})   and $ U_j^n $ is the numerical solution of the discretized equation (\ref{fourthccd}) obtained in finite precision arithmetic. Since $\tilde{U}_j^n$ satisfies the discretized equation exactly,  round-off error $\epsilon_j^n$ must also satisfy the discretized equation  {\cite{anderson1995computational}}.  Thus we obtain the  following  error equation
\begin{equation}\label{errorequation}
 P_x\epsilon_j^{n+1}-\frac{K_1\tau}{2 } \,_{L}D_{h}^\alpha \epsilon_j^{n+1} -\frac{K_2\tau}{2 } \,_{R}D_{h}^\alpha \epsilon_j^{n+1}
=P_x\epsilon_j^{n}+\frac{K_1\tau}{2 } \,_{L}D_{h}^\alpha \epsilon_j^{n} +\frac{K_2\tau}{2 } \,_{R}D_{h}^\alpha \epsilon_j^{n}.
\end{equation}
Since the  boundary conditions of error equation (\ref{errorequation}) are  $\epsilon_0^n=\epsilon_M^n=\epsilon_0^{n+1}=\epsilon_M^{n+1}=0$,
  we zero extend the solution of the problem (\ref{errorequation}) to the whole real line $R$. So it's reasonable to replace the symbols $j+1$ and $M-j+1$ in error equation (\ref{errorequation}) with $\infty$.
Now we have
\begin{equation}\label{extendederrorequation}
\begin{array}{l}
\displaystyle
b_2^\alpha  \epsilon_{j-1}^{n+1}+(1-2b_2^\alpha  )\epsilon_j^{n+1} +b_2^\alpha  \epsilon_{j+1}^{n+1}-\frac{K_1\tau}{2h^\alpha}\sum_{k=0}^{\infty}w_{k}^{(\alpha )}\epsilon_{j-k+1}^{n+1} -\frac{K_2\tau}{2h^\alpha}\sum_{k=0}^{\infty}w_{k}^{(\alpha )}\epsilon_{j+k-1}^{n+1} \\\\
\displaystyle
=b_2^\alpha  \epsilon_{j-1}^{n}+(1-2b_2^\alpha  )\epsilon_j^{n} +b_2^\alpha  \epsilon_{j+1}^{n}+\frac{K_1\tau}{2h^\alpha}\sum_{k=0}^{\infty}w_{k}^{(\alpha )}\epsilon_{j-k+1}^{n} +\frac{K_2\tau}{2h^\alpha}\sum_{k=0}^{\infty}w_{k}^{(\alpha )}\epsilon_{j+k-1}^{n}.
\end{array}
\end{equation}
Let $\epsilon_j^n=v^ne^{ij\sigma}$ be the solution of (\ref{extendederrorequation}), where $i=\sqrt{-1}$, $v^n$ is the amplitude at time level $n$ and
$\sigma(=2\pi h/k)$ is the phase angle with wavelength $k$. We just need to prove that the amplification factor $v(\sigma,\alpha)$ satisfies
the relation $|v(\sigma,\alpha) |\leq1$ for all $\sigma$ in $[-\pi,\pi]$. In fact, by substituting the expressions of $\epsilon_j^n(=v^ne^{ij\sigma})$ and $\epsilon_j^{n+1}(=v^{n+1}e^{ij\sigma})$ into (\ref{extendederrorequation}), we obtain the amplification factor of the  CN quasi-compact scheme
$$
\begin{array}{lll}
\displaystyle v(\sigma,\alpha) =\frac{ 1-4b_2^\alpha  \sin^2\frac{\sigma }{2}  +\frac{K_1\tau}{2h^\alpha}\sum\limits_{k=0}^{\infty}w_{k}^{(\alpha)}e^{-i(k-1)\sigma } +\frac{K_2\tau}{2h^\alpha}\sum\limits_{k=0}^{\infty}w_{k}^{(\alpha)}e^{i(k-1)\sigma }  }
{ 1-4b_2^\alpha  \sin^2\frac{\sigma }{2}-\frac{K_1\tau}{2h^\alpha}\sum\limits_{k=0}^{\infty}w_{k}^{(\alpha)}e^{-i(k-1)\sigma }-\frac{K_2\tau}{2h^\alpha}\sum\limits_{k=0}^{\infty}w_{k}^{(\alpha)}e^{i(k-1)\sigma }  }\\\\
\displaystyle~~~~~~~~ ~~= \frac{Q_1(\sigma,\alpha)  +Q_2(\sigma,\alpha)  }
{ Q_1(\sigma,\alpha) - Q_2(\sigma,\alpha) },
\end{array}
$$
where  $Q_1(\sigma,\alpha)= 1-4b_2^\alpha  \sin^2\frac{\sigma }{2}$ and  $Q_2(\sigma,\alpha)=\frac{K_1\tau}{2h^\alpha}\sum\limits_{k=0}^{\infty}w_{k}^{(\alpha)}e^{-i(k-1)\sigma } +\frac{K_2\tau}{2h^\alpha}\sum\limits_{k=0}^{\infty}w_{k}^{(\alpha)}e^{i(k-1)\sigma }$.
A straightforward calculation yields
\begin{equation}\label{q2}
\begin{array}{l}
\displaystyle~~~~ Q_2(\sigma,\alpha)\\\\
\displaystyle=\frac{K_1\tau}{2h^\alpha} \sum_{k=0}^{\infty}w_{k}^{(\alpha)} e^{-i(k-1)\sigma } +\frac{K_2\tau}{2h^\alpha}\sum_{k=0}^{\infty}w_{k}^{(\alpha)} e^{i(k-1)\sigma }\\\\
\displaystyle  =\frac{\mu_1\tau}{2h^\alpha}(K_1\sum_{k=0}^{\infty}g_k^{(\alpha)}e^{-i(k-1)\sigma }+K_2\sum_{k=0}^{\infty}g_k^{(\alpha)}e^{ i(k-1)\sigma })+\frac{\mu_0 \tau}{2h^\alpha}(K_1\sum_{k=0}^{\infty}g_k^{(\alpha)}e^{-i(k)\sigma }\\\\
\displaystyle~~~~ +K_2\sum_{k=0}^{\infty}g_k^{(\alpha)}e^{ i(k)\sigma })+\frac{\mu_{-1} \tau}{2h^\alpha}(K_1\sum_{k=0}^{\infty}g_k^{(\alpha)}e^{-i(k+1)\sigma }+K_2\sum_{k=0}^{\infty}g_k^{(\alpha)}e^{ i(k+1)\sigma })\\\\
\displaystyle  =\frac{\mu_1\tau}{2h^\alpha} (K_1(1-e^{-i\sigma })^\alpha e^{i\sigma }+K_2(1-e^{ i\sigma })^\alpha e^{-i\sigma })+\frac{\mu_0\tau}{2h^\alpha}( K_1(1-e^{-i\sigma })^\alpha +K_2(1-e^{ i\sigma })^\alpha)\\\\
\displaystyle~~~~ +\frac{\mu_{-1}\tau}{2h^\alpha} (K_1(1-e^{-i\sigma })^\alpha e^{-i\sigma } +K_2(1-e^{ i\sigma })^\alpha e^{ i\sigma } )\\\\
 \displaystyle
=\frac{\tau}{2h^\alpha}(2\sin(\frac{\sigma }{2}))^\alpha(\mu_1(K_1 e^{i(\frac{\alpha\pi}{2}-\frac{\alpha\sigma }{2}+\sigma  )}+K_2 e^{-i(\frac{\alpha\pi}{2}-\frac{\alpha\sigma }{2}+\sigma  )}) +  \mu_0(K_1 e^{i(\frac{\alpha\pi}{2}-\frac{\alpha\sigma }{2})}\\\\
\displaystyle~~~~+K_2 e^{-i(\frac{\alpha\pi}{2}-\frac{\alpha\sigma }{2})} ) +  \mu_{-1}( K_1 e^{i(\frac{\alpha\pi}{2}-\frac{\alpha\sigma }{2}-\sigma   )}+ K_2 e^{-i(\frac{\alpha\pi}{2}-\frac{\alpha\sigma }{2}-\sigma   )})).
\end{array}
\end{equation}
As $Q_1(\sigma,\alpha)$ is real-valued,
\[
|  v(\sigma,\alpha) |=\frac{|Q_1  +Q_2   |}
{ |Q_1  - Q_2  |  }= \sqrt{ \frac{ \left(Q_1   +Re(Q_2 )\right)^2+  (Im(Q_2 ) ) ^2  }
{ \left(Q_1  -Re(Q_2 )\right)^2+ \left(Im(Q_2 ) \right)^2 }},
\]
where $Re(Q_2 )$ and $Im(Q_2 )$ are real part and  imaginary part of $Q_2 $, respectively.
In order to   prove  that $| v(\sigma,\alpha)|\leq1$, we need to check
$$Q_1 \cdot Re(Q_2 )\leq 0.$$
Note that  $b_2^\alpha  =(4+\alpha-\alpha^2)/24\leq1/6$  for any $\alpha\in[1,2]$.
So $ Q_1=1-4b_2^\alpha  \sin^2(\frac{\sigma }{2})>0$.
Form (\ref{q2}), we know
\begin{equation*}
\begin{array}{lll}
  Re(Q_2 ) &=& \displaystyle \frac{(K_1+K_2)\tau }{2h^\alpha} (2\sin(\frac{\sigma } {2}))^\alpha  ( \mu_1 \cos(\frac{\alpha\pi}{2}-\frac{\alpha\sigma }{2}+\sigma )+\mu_0 \cos(\frac{\alpha\pi}{2}-\frac{\alpha\sigma }{2}) \\ \\ && \displaystyle
+\mu_{-1} \cos(\frac{\alpha\pi}{2}-\frac{\alpha\sigma }{2}-\sigma )) \\ \\
&=&  \displaystyle \frac{(K_1+K_2)\tau }{4h^\alpha} f(\alpha; \sigma),
\end{array}
\end{equation*}
where $f(\alpha; \sigma)$ is defined by (\ref{GeneratFunc}).
Together  with $K_1+K_2> 0$ and Fig. \ref{fig1}, we obtain $ Re(Q_2 )\leq0 $.  Thus $Q_1 \cdot Re(Q_2 )\leq0 $. Then $| v(\sigma,\alpha)|\leq1$. So the C-N quasi-compact difference scheme is unconditionally stable.
\end{proof}
\begin{thm} \label{convergence}
Let $u(x_j,t_n)$ be the exact solution of (\ref{equ1}), and $U_j^n$ the solution of the given finite difference scheme (\ref{fourthccd}). Then we have
\[\|u(x_j,t_n)-U_j^n\|\leq C(\tau^2+h^4),\]
for all $1\leq n\leq N$, where $C$ is a constant independent of $n$, $\tau$, and $h$.
\end{thm}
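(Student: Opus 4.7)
The plan is to run a matrix-based discrete energy estimate driven by the truncation error, using the negative semidefiniteness of $B_\alpha+B_\alpha^T$ from Theorem \ref{ka} together with the uniform positive definiteness of $P_\alpha$. The overall scheme is identical in structure to the stability argument of Theorem \ref{stability1d}, but replaces the Fourier/amplification-factor analysis by an inner product identity that retains control of the source term.

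First I will set $e_j^n := u(x_j,t_n)-U_j^n$ and gather the errors into vectors $e^n=(e_1^n,\dots,e_{M-1}^n)^T$. Subtracting the CN quasi-compact scheme (\ref{fourthccd}) from its consistency identity (\ref{fore}), and noting that $e^0=0$ and the homogeneous boundary errors $e_0^n=e_M^n=0$ eliminate the $H^n$ contribution, yields the error equation
\begin{equation*}
P_\alpha(e^{n+1}-e^n)=B_\alpha(e^{n+1}+e^n)+\tau R^{n+1/2},
\end{equation*}
where $\|R^{n+1/2}\|_\infty\le C(\tau^2+h^4)$ and so $\|R^{n+1/2}\|\le C(\tau^2+h^4)$ in the discrete $L^2$ norm as well. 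I will then take the discrete inner product of this identity with $e^{n+1}+e^n$. Using the symmetry of $P_\alpha$, the left-hand side telescopes as $\|e^{n+1}\|_{P_\alpha}^2-\|e^n\|_{P_\alpha}^2$, where $\|v\|_{P_\alpha}^2:=\langle P_\alpha v,v\rangle$. The $B_\alpha$ term becomes $\tfrac12\langle(B_\alpha+B_\alpha^T)(e^{n+1}+e^n),e^{n+1}+e^n\rangle\le 0$ by Theorem \ref{ka}, and the remaining term is controlled by Cauchy--Schwarz by $\tau\|R^{n+1/2}\|(\|e^{n+1}\|+\|e^n\|)$.

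Next I will verify that $P_\alpha$ is uniformly positive definite. Since $b_2^\alpha=(4+\alpha-\alpha^2)/24\in(0,1/6]$ for $\alpha\in(1,2)$, the matrix $P_\alpha$ is symmetric and strictly diagonally dominant with positive diagonal; equivalently, its Toeplitz generating function $1-4b_2^\alpha\sin^2(\sigma/2)$ is bounded between constants in $(0,1]$ independent of $h$ and $\alpha$, so Lemma \ref{sy1} delivers the uniform norm equivalence $c_1\|v\|\le\|v\|_{P_\alpha}\le c_2\|v\|$. Substituting this into the energy inequality produces
\begin{equation*}
\|e^{n+1}\|_{P_\alpha}^2-\|e^n\|_{P_\alpha}^2\le C\tau(\tau^2+h^4)\bigl(\|e^{n+1}\|_{P_\alpha}+\|e^n\|_{P_\alpha}\bigr).
\end{equation*}
Factoring the left-hand side as a difference of squares and dividing out $\|e^{n+1}\|_{P_\alpha}+\|e^n\|_{P_\alpha}$ (the estimate being trivial if this vanishes) gives $\|e^{n+1}\|_{P_\alpha}-\|e^n\|_{P_\alpha}\le C\tau(\tau^2+h^4)$. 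Telescoping from $k=0$ to $n-1$, using $e^0=0$ and $n\tau\le T$, and then restoring the discrete $L^2$ norm through the equivalence yields the claimed bound $\|u(x_j,t_n)-U_j^n\|\le C(\tau^2+h^4)$.

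The main obstacle I anticipate is Step 2: extracting the right energy identity. One must be careful to retain only the symmetric part of $B_\alpha$ (so that Theorem \ref{ka} applies) while the antisymmetric part vanishes automatically in the quadratic form, and one must ensure the Cauchy--Schwarz bound on the forcing is compatible with the $\|\cdot\|_{P_\alpha}$ norm that appears on the left. The positive definiteness of $P_\alpha$ is elementary but the estimate would be lost without the \emph{uniform} version of that bound.
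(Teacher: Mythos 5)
Your proof is correct, and it reaches the same estimate by a mechanically different route than the paper. Both arguments are discrete energy estimates resting on exactly the same two structural facts: the negative definiteness of $B_\alpha+B_\alpha^T$ (Theorem \ref{ka}) and the uniform spectral bounds $\lambda(P_\alpha)_j\in(1/3,1)$. The paper, however, works at the level of matrix inequalities: it multiplies the error equation by $(P_\alpha^{-1})^{1/2}$ (invoking Lemma \ref{le3.3} for the square root), establishes the two-sided operator bounds
\begin{equation*}
\bigl((P_\alpha)^{\frac{1}{2}}\mp(P_\alpha^{-1})^{\frac{1}{2}}B_\alpha\bigr)^T\bigl((P_\alpha)^{\frac{1}{2}}\mp(P_\alpha^{-1})^{\frac{1}{2}}B_\alpha\bigr)\gtrless P_\alpha+B_\alpha^TP_\alpha^{-1}B_\alpha,
\end{equation*}
and runs the recursion in the energy $E^n=\sqrt{h(\varepsilon^n)^T(P_\alpha+B_\alpha^TP_\alpha^{-1}B_\alpha)\varepsilon^n}$. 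You instead test the error equation $P_\alpha(e^{n+1}-e^n)=B_\alpha(e^{n+1}+e^n)+\tau R^{n+1/2}$ directly against $e^{n+1}+e^n$, let the symmetry of $P_\alpha$ telescope the left side into $\|e^{n+1}\|_{P_\alpha}^2-\|e^n\|_{P_\alpha}^2$, and discard the $B_\alpha$ term because only its symmetric part survives the quadratic form. Your version buys simplicity: it dispenses with matrix square roots and with the auxiliary quadratic form $B_\alpha^TP_\alpha^{-1}B_\alpha$ entirely, at the cost of nothing essential --- the Cauchy--Schwarz step and the uniform norm equivalence $c_1\|v\|\le\|v\|_{P_\alpha}\le c_2\|v\|$ (which follows from the eigenvalue bound, or from Lemma \ref{sy1} applied to the generating function $1-4b_2^\alpha\sin^2(\sigma/2)$) do all the remaining work. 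One small remark: the paper's final lines carry a typo ($h^2$ where $h^4$ is meant in the truncation bound); your statement $\|R^{n+1/2}\|\le C(\tau^2+h^4)$ is the one consistent with the theorem as stated.
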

\begin{proof}
Denote $\varepsilon_j^n= u(x_j,t_n)-U_j^n$ and $\varepsilon^n=(\varepsilon_1^n,\varepsilon_2^n,\cdots,\varepsilon_{M-1}^n)^T$.
According to (\ref{fore})-(\ref{matrixfourth}), we obtain
\begin{equation} \label{errormatrix}
(P_\alpha-B_\alpha)\varepsilon^{n+1}=(P_\alpha+B_\alpha)\varepsilon^{n}+\tau R^{n+1/2},
\end{equation}
where  $R^{n+1/2}=(R_1^{n+1/2},R_2^{n+1/2},\cdots,R_{M-1}^{n+1/2})^T$.
The eigenvalues of $ P_\alpha$ are given by
$$\lambda(P_\alpha)_j=1-4b_2^\alpha\sin^2(j\pi /M)>0,\,\,\, j=1,\cdots, M-1.$$
Since $b_2^\alpha\in(1/12,1/6)$, we have $\lambda(P_\alpha)_j\in(1/3,1)$.
 So the matrix $ P_\alpha$ is invertible and 
    positive definite, which means that $P_\alpha^{-1}$ exists and
is also    positive definite. According to Lemma \ref{le3.3}, we know that $(P_\alpha^{-1})^{\frac{1}{2}}$ uniquely  exists and is positive semi-definite. 
Multiplying $(P_\alpha^{-1})^{\frac{1}{2}}$  and taking the discrete $L_2$ norm on both sides of  (\ref{errormatrix}) imply
\begin{equation*}
 \|((P_\alpha )^{\frac{1}{2}}-(P_\alpha^{-1})^{\frac{1}{2}}B_\alpha)\varepsilon^{n+1}\|\leq
\| ((P_\alpha )^{\frac{1}{2}}+(P_\alpha^{-1})^{\frac{1}{2}}B_\alpha)\varepsilon^{n}\|+\tau \|(P_\alpha^{-1})^{\frac{1}{2}}R^{n+1/2}\|.
\end{equation*}
 In view of Theorem   \ref{ka}, we know that $B_\alpha+B_\alpha^T$ is a negative definite matrix.
 Furthermore,
 \begin{equation}\label{2}
 \begin{array}{l }
\displaystyle~~~~
   ((P_\alpha)^{\frac{1}{2}}-(P_\alpha^{-1})^{\frac{1}{2}}B_\alpha)^T((P_\alpha)^{\frac{1}{2}}-(P_\alpha^{-1})^{\frac{1}{2}}B_\alpha)\\\\
\displaystyle   =P_\alpha-B_\alpha-B_\alpha ^T+B_\alpha ^T P_\alpha^{-1}B_\alpha\geq P_\alpha+B_\alpha ^T P_\alpha^{-1}B_\alpha
   \end{array}
 \end{equation}
 and
 \begin{equation}\label{3}
  \begin{array}{l }
\displaystyle~~~~
   ((P_\alpha)^{\frac{1}{2}}+(P_\alpha^{-1})^{\frac{1}{2}}B_\alpha)^T((P_\alpha)^{\frac{1}{2}}+(P_\alpha^{-1})^{\frac{1}{2}}B_\alpha)\\\\
 \displaystyle  =P_\alpha+B_\alpha+B_\alpha ^T+B_\alpha ^T P_\alpha^{-1}B_\alpha\leq P_\alpha+B_\alpha ^T P_\alpha^{-1}B_\alpha,
  \end{array}
 \end{equation}
where the matrix $A\geq B$ means that $A- B$ is positive semi-definite.
Denote
\begin{equation}\label{5}
 E^n=\sqrt{h(\varepsilon^{n})^T (P_\alpha+B_\alpha ^T P_\alpha^{-1}B_\alpha)\varepsilon^{n}}.
\end{equation}
Since $B_\alpha ^T P_\alpha^{-1}B_\alpha $ is positive definite, we know
\begin{equation}\label{le}
  E^n\geq \sqrt{h(\varepsilon^{n})^T  P_\alpha\varepsilon^{n}}\geq \sqrt{\lambda_{\min} (P_\alpha) }||\varepsilon^{n} ||,
\end{equation}
where $\lambda_{\min}(P_\alpha)$ is the minimum eigenvalue of matrix $P_\alpha $.
Together with (\ref{2}) and (\ref{3}), we have
\begin{equation}\label{con}
 \begin{array}{l }
\displaystyle
  E^{n+1 }-E^n \leq \tau \|(P_\alpha^{-1})^{\frac{1}{2}}R^{n+1/2}\| =\tau\sqrt{h (R^{n+1/2})^T(P_\alpha^{-1}) R^{n+1/2}}\\\\
\displaystyle~~~~  ~~~~ ~~~~ ~~\leq  \tau \sqrt{\lambda_{\max}(P_\alpha^{-1})} \| R^{n+1/2}\|=\frac{\tau }{\sqrt{\lambda_{\min} (P_\alpha)}} \| R^{n+1/2}\|.
   \end{array}
\end{equation}
Summing up (\ref{con}) from $0$ to $n-1$ leads to
\begin{equation}\label{le1}
   E^n\leq \tau\sum_{k=0}^{n-1} \|(P_\alpha^{-1})^{\frac{1}{2}}R^{k+1/2}\| \leq \frac{\tau }{\sqrt{\lambda_{\min} (P_\alpha)}} \sum_{k=0}^{n-1}\| R^{k+1/2}\|.
\end{equation}
Combining (\ref{le}) and (\ref{le1}) and noticing that $|R^{k+1/2}_j|\leq c(\tau^2+h^2)$ for $1\leq j\leq M-1$, we obtain
\begin{equation*}
   \| \varepsilon^n\|\leq\frac{cT}{\lambda_{\min} (P_\alpha)}(\tau^2+h^2)\leq C(\tau^2+h^2).
\end{equation*}
\end{proof}

\section{Quasi-compact scheme for two dimensional space fractional diffusion equation }\label{sec2}
To discuss the quasi-compact scheme in two dimensional case,  we consider the following    space fractional diffusion equation
\begin{equation}\label{equ2}
\left\{
\begin{array}{l ll}
\displaystyle \frac{\partial u(x,t)}{\partial t} =K^x_1\, _aD_x^\alpha u(x,t)+K_2^x\,_xD_b^\alpha u(x,t)\\\\
\displaystyle~~~~ ~~~~ ~~~~  ~~ +K_1^y\, _cD_y^\beta u(x,t)+K_2^y\,_yD_d^\beta u(x,t)+f(x,t),  &(x,y,t)\in \Omega\times(0,T], \\\\
\displaystyle  u(x,y,0)=u_0(x,y),&(x,y)\in \Omega,\\\\
\displaystyle   u(x,y,t)=\phi(x,y,t),\, &(x,y,t)\in \partial\Omega\times(0,T],
\end{array}
\right.
\end{equation}
where $\Omega=(a,b)\times(c,d)$ and the fractional orders $1<\alpha,\beta\leq2$. The diffusion coefficients $K_j^x$ and $K_j^y$ ($j=1,2$) are non-negative and satisfy
$(K_1^j)^2+(K_2^j)^2\neq0$ ($j=x,y$). 
The boundary function $\phi$ satisfies the following condition, if $K_1^x\neq 0$, then $\phi(a,y,t)=0$; if $K_1^y\neq 0$, then $\phi(x ,c ,t)=0$;
if $K_2^x\neq 0$, then $\phi(b  ,y,t)=0$; if $K_2^y\neq 0$, then $\phi(x,d,t)=0$.
We assume that the equation (\ref{equ2}) has a unique and sufficiently smooth solution.

\subsection{CN-CWSGD scheme}
Let us denote $x_j=a+jh_x$, $y_s=c+sh_y$, and $t_n=n\tau$ for $0\leq j\leq M_x$, $0\leq s\leq M_y$, and $0\leq n\leq N$, where the space step size $h_x=(b-a)/M_x$, $h_y=(d-c)/M_y$ and  time step size $\tau=T/N$.
Here we take $u_{j,s}^n=u(x_j,y_s,t_n)$ and $ f_{j,s}^{n+1/2}=f(x_j,y_s,t_{n+1/2})$.
 The maximum norm and the discrete $L_2$ norm are defined as
\begin{equation}
\|u\|_\infty=\max\limits_{1\leq j\leq M_x-1,\\ \atop 1\leq s\leq M_y-1}|u_{j,s}|,\quad \|u\|^2= h_xh_y\sum_{j=1}^{M_x-1}\sum_{s=1}^{M_y-1}u_{j,s}^2.
\end{equation}
We still use the Crank-Nicolson technique for the time discretization of equation (\ref{equ2}) and get
\begin{equation}
\begin{array}{lll}
\displaystyle
\frac{u_{j,s}^{n+1}-u_{j,s}^n}{\tau}=\frac{1}{2}\left(K^x_1(_aD_x^\alpha u)_{j,s}^n + K^x_1(_aD_x^\alpha u)_{j,s}^{n+1}  +K^x_2(_xD_b^\alpha u)_{j,s}^n + K^x_2(_xD_b^\alpha u)_{j,s}^{n+1} \right.\\\\
\displaystyle~~~~~~~~~~~~~~~~ ~~\left.+K^y_1(_cD_y^\beta u)_{j,s}^n + K^y_1(_cD_y^\beta u)_{j,s}^{n+1}  +K^y_2(_yD_d^\beta u)_{j,s}^n + K^y_2(_yD_d^\beta u)_{j,s}^{n+1} \right) \\\\\displaystyle~~~~~~~~~~~~~~~~ ~~
+f^{n+1/2}_{j,s}+O(\tau^2).
\end{array}
\end{equation}
In  space, the fourth order quasi-compact discretizations are used   to approximate the Riemann-Liouville fractional derivatives. This implies that
\begin{equation}\label{twodem1}
\begin{array}{l}
\displaystyle
( P_xP_y  -\frac{K^x_1\tau}{2}P_y\,_{L}D_{h_x}^\alpha   -\frac{K^x_2\tau}{2}P_y\,_{R}D_{h_x}^\alpha  
-\frac{K^y_1\tau}{2 }P_x\,_{L}D_{h_y}^\alpha  -\frac{K^y_2\tau}{2 }P_x\,_{R}D_{h_y}^\alpha) u_{j,s}^{n+1}\\\\
\displaystyle=(P_xP_y +\frac{K^x_1\tau}{2}P_y\,_{L}D_{h_x}^\alpha   +\frac{K^x_2\tau}{2}P_y\,_{R}D_{h_x}^\alpha
+\frac{K^y_1\tau}{2 }P_x\,_{L}D_{h_y}^\alpha  +\frac{K^y_2\tau}{2 }P_x\,_{R}D_{h_y}^\alpha) u_{j ,s}^{n}
\\\\\displaystyle~~~~  +\tau P_xP_y f_{j,s}^{n+1/2}+\tau   R^{n+1/2}_{j,s},
\end{array}
\end{equation}
where  $$R^{n+1/2}_{j,s}\leq C(\tau^2+h_x^4+h^4_y).$$
For convenience, we introduce  the following  discrete operator which works for   two variables $x,y$,
\[\delta_x^\alpha u_{j,s}= K^x_1 \,_{L}D_{h_x}^\alpha u_{j,s} + K^x_2 \,_{R}D_{h_x}^\alpha u_{j,s} .\]
Then the equation (\ref{twodem1}) can be rewritten as
\begin{equation}\label{twodem2}
\begin{array}{l}
\displaystyle
( P_xP_y -\frac{\tau}{2}P_y\delta_x^\alpha  -\frac{\tau}{2}P_x\delta_y^\beta) u_{j,s}^{n+1} \\\\
\displaystyle = (P_xP_y +\frac{\tau}{2}P_y\delta_x^\alpha  +\frac{\tau}{2}P_x\delta_y^\beta )u_{j,s}^{n}
+\tau P_xP_y f_{j,s}^{n+1/2}+\tau  R^{n+1/2}_{j,s}.
\end{array}
\end{equation}
Adding the splitting term
\begin{equation}
  \frac{\tau^2}{4}\delta_x^\alpha\delta_y^\beta( u_{j,s}^{n+1}- u_{j,s}^{n})\, (=\tau^3 O(\tau^2+h_x^4+h^4_y)),
\end{equation}
 to the equation (\ref{twodem2}),  we obtain
\begin{equation}\label{twodem3}
( P_x-\frac{\tau}{2}\delta_x^\alpha)(P_y -  \frac{\tau}{2}\delta_y^\beta) u_{j,s}^{n+1}
= ( P_x+\frac{\tau}{2}\delta_x^\alpha)(P_y + \frac{\tau}{2}\delta_y^\beta)u_{j,s}^{n} +\tau P_xP_y f_{j,s}^{n+1/2}+\tau  R^{n+1/2}_{j,s}.
\end{equation}
Thus the quasi-compact finite difference scheme for (\ref{equ2}) is given by
\begin{equation}\label{twodem22}
\begin{array}{lll}
\displaystyle~~~~
( P_x-\frac{\tau}{2}\delta_x^\alpha)(P_y -  \frac{\tau}{2}\delta_y^\beta) U_{j,s}^{n+1}
= ( P_x+\frac{\tau}{2}\delta_x^\alpha)(P_y + \frac{\tau}{2}\delta_y^\beta)U_{j,s}^{n} +\tau P_xP_y f_{j,s}^{n+1/2}.
\end{array}
\end{equation}
 As an efficient way to implementation, we give the following equivalent schemes:
 \begin{itemize}
   \item  quasi-compact Douglas-ADI scheme:
      \begin{equation}\label{d1adi}
    \begin{array}{lll}
\displaystyle ( P_x-\frac{\tau}{2}\delta_x^\alpha) U_{j,s}^{*}= (P_xP_y + \frac{\tau}{2}P_y\delta_x^\alpha+\tau P_x\delta_y^\beta)U_{j,s}^{n} + \tau P_x    P_y f_{j,s}^{n+1/2},\\\\
\displaystyle  (P_y -  \frac{\tau}{2}\delta_y^\beta)U_{j,s}^{n+1}=    U_{j,s}^{*}-\frac{\tau }{2}\delta_y^\beta U _{j,s}^{n};
 \end{array}
   \end{equation}
   \item  quasi-compact D'yakonov-ADI scheme:
       \begin{equation}\label{d2adi}
    \begin{array}{lll}
\displaystyle ( P_x-\frac{\tau}{2}\delta_x^\alpha) U_{j,s}^{*}=( P_x+\frac{\tau}{2}\delta_x^\alpha) (P_y + \frac{\tau}{2}\delta_y^\beta) U_{j,s}^{n} + \tau P_x    P_y f_{j,s}^{n+1/2},\\\\
\displaystyle  (P_y -  \frac{\tau}{2}\delta_y^\beta)U_{j,s}^{n+1}=    U_{j,s}^{*}.
 \end{array}
   \end{equation}
 \end{itemize}

\subsection{Stability and convergence analysis}
The following stability analysis and accuracy analysis indicate that two dimensional CN quasi-compact scheme has fourth order accuracy in space and is unconditionally stable.
\begin{lem}(\cite{bhatia2009positive})\label{posi}
Let $A$, $B$ be two positive semi-definite matrices, symbolized $A\geq0$, $B\geq0$. Then $A\otimes B\geq0$.
\end{lem}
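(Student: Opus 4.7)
The plan is to reduce the positive semi-definiteness of $A\otimes B$ to the mixed-product property of the Kronecker product together with the characterization of positive semi-definite matrices as Gram matrices. Specifically, since $A$ and $B$ are positive semi-definite, I can write $A = C^T C$ and $B = D^T D$ for suitable real matrices $C,D$ (for instance, via the symmetric square root of Lemma \ref{le3.3}, or via a Cholesky-type factorization). This factorization is the key handle, because it converts the PSD hypothesis into an algebraic identity that interacts cleanly with $\otimes$.

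Next I would invoke the identity $(X_1 X_2)\otimes(Y_1 Y_2) = (X_1\otimes Y_1)(X_2\otimes Y_2)$ together with $(X\otimes Y)^T = X^T\otimes Y^T$, both of which are standard properties of the Kronecker product. Applying these to $A\otimes B = (C^T C)\otimes(D^T D)$ yields
\begin{equation*}
A\otimes B = (C^T\otimes D^T)(C\otimes D) = (C\otimes D)^T(C\otimes D),
\end{equation*}
which is manifestly positive semi-definite since it is a Gram matrix: for any vector $z$,
\begin{equation*}
z^T(A\otimes B)z = \|(C\otimes D)z\|_2^2 \geq 0.
\end{equation*}

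As an alternative viewpoint, one may use the spectral decompositions $A = \sum_i \lambda_i u_i u_i^T$ and $B = \sum_j \mu_j v_j v_j^T$ with $\lambda_i,\mu_j\geq 0$, and then expand
\begin{equation*}
A\otimes B = \sum_{i,j}\lambda_i\mu_j\,(u_i\otimes v_j)(u_i\otimes v_j)^T,
\end{equation*}
exhibiting $A\otimes B$ as a non-negative combination of rank-one PSD matrices. Either route gives the conclusion.

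There is really no significant obstacle here: the only subtlety is ensuring that the factorizations and the mixed-product rule are applied with matching dimensions, and this is automatic once $A$ and $B$ are symmetric. I would present the Gram matrix argument as it is shortest and dovetails with the way Lemma \ref{le3.3} has already been invoked earlier in the paper.
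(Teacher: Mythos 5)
Your argument is correct. Note, however, that the paper does not prove this lemma at all: it is quoted directly from \cite{bhatia2009positive} with no argument supplied, so there is nothing to compare against. Your Gram-matrix proof is the standard one and is self-contained given tools the paper already states: the factorization $A=C^TC$, $B=D^TD$ follows from Lemma \ref{le3.3} (take $C=A^{1/2}$, $D=B^{1/2}$), the mixed-product identity is exactly Lemma \ref{commute}, and the transpose rule $(X\otimes Y)^T=X^T\otimes Y^T$ is Lemma \ref{commute2}; hence $A\otimes B=(C\otimes D)^T(C\otimes D)\geq 0$. The spectral-decomposition variant is equally valid (and is consistent with Lemma \ref{l3.3}, which shows the eigenvalues of $A\otimes B$ are the products $\lambda_i\mu_j\geq 0$). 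Either route would serve as a legitimate replacement for the bare citation.
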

\begin{lem}(\cite{Laub2005})\label{l3.3}
Let $A\in R^{n\times n}$ have eigenvalues $\{\tilde{\rho}_j\}_{j=1}^n$ and $B\in R^{m\times m}$ have eigenvalues $\{\rho _j\}_{j=1}^m$.
Then the $mn$ eigenvalues of $A\otimes B $ are
\[\tilde{\rho}_1\rho_1,\cdots,\tilde{\rho}_1\rho_m,\,\tilde{\rho}_2\rho_1,\cdots,\tilde{\rho}_2\rho_m,\cdots,\tilde{\rho}_n\rho_1,\cdots,\tilde{\rho}_n\rho_m.\]
\end{lem}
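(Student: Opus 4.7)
The plan is to prove Lemma \ref{l3.3} via the complex Schur triangularization theorem combined with the mixed-product property of the Kronecker product. Since eigenvalues are defined over $\mathbb{C}$, I would work with $A$ and $B$ viewed as complex matrices, and recover the conclusion about multisets of eigenvalues at the end.

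First, I would invoke the Schur decomposition: there exist unitary matrices $U\in\mathbb{C}^{n\times n}$ and $V\in\mathbb{C}^{m\times m}$ and upper triangular matrices $T_A, T_B$ such that $U^{*}AU=T_A$ and $V^{*}BV=T_B$, with the diagonal entries of $T_A$ being $\tilde{\rho}_1,\ldots,\tilde{\rho}_n$ and those of $T_B$ being $\rho_1,\ldots,\rho_m$. Next, I would apply the mixed-product identity $(X\otimes Y)(Z\otimes W)=(XZ)\otimes(YW)$ together with $(U\otimes V)^{*}=U^{*}\otimes V^{*}$ to obtain
\begin{equation*}
(U\otimes V)^{*}(A\otimes B)(U\otimes V) \;=\; T_A\otimes T_B.
\end{equation*}
Since $U\otimes V$ is unitary, the matrices $A\otimes B$ and $T_A\otimes T_B$ are unitarily similar, hence share the same spectrum with multiplicities.

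The decisive observation is then that $T_A\otimes T_B$ is itself upper triangular. Indeed, its $(i,j)$-block (with $i,j\in\{1,\ldots,n\}$) equals $(T_A)_{ij}T_B$, which vanishes when $i>j$ because $T_A$ is upper triangular, and is upper triangular itself when $i=j$ since $T_B$ is upper triangular. Reading off the diagonal of $T_A\otimes T_B$, one gets precisely the $mn$ products $(T_A)_{ii}(T_B)_{kk}=\tilde{\rho}_i\rho_k$ in the ordering stated in the lemma.

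I do not anticipate a serious obstacle: the Schur theorem and the mixed-product identity are both standard, and everything else is bookkeeping. The only subtle point worth emphasizing explicitly is that the argument is carried out over $\mathbb{C}$ even though $A$ and $B$ are real, since eigenvalues of real matrices need not be real; the conclusion, however, is a statement about the spectrum as a multiset and is therefore unaffected. A shorter but less general alternative, valid whenever $A$ and $B$ are both diagonalizable, is to take eigenpairs $Ax_i=\tilde{\rho}_i x_i$ and $By_k=\rho_k y_k$ and verify directly that $(A\otimes B)(x_i\otimes y_k)=\tilde{\rho}_i\rho_k(x_i\otimes y_k)$, producing all $mn$ eigenvectors by the linear independence of $\{x_i\otimes y_k\}$; I would include this as a remark, but base the main argument on Schur to cover the non-diagonalizable case.
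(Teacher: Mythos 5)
Your proof is correct. Note, however, that the paper does not prove this lemma at all: it is quoted verbatim from the cited reference (Laub, \emph{Matrix analysis for scientists and engineers}) and used as a black box in the two-dimensional convergence analysis, so there is no in-paper argument to compare against. Your Schur-triangularization route is the standard textbook proof and every step checks out: the mixed-product identity gives $(U\otimes V)^{*}(A\otimes B)(U\otimes V)=T_A\otimes T_B$ with $U\otimes V$ unitary, the block argument correctly shows $T_A\otimes T_B$ is upper triangular, and its diagonal delivers the $mn$ products $\tilde{\rho}_i\rho_k$ in exactly the order the lemma lists them. You are also right to flag that the argument must be run over $\mathbb{C}$ even though $A$ and $B$ are real, and your remark on the shorter eigenvector argument $(A\otimes B)(x_i\otimes y_k)=\tilde{\rho}_i\rho_k(x_i\otimes y_k)$ in the diagonalizable case is a sensible addition; in the paper's actual application the matrices involved ($P_\alpha$ and the symmetric parts of $B_\alpha$) are symmetric, so that shorter argument would in fact suffice there, but basing the general statement on Schur is the right call.
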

\begin{lem}(\cite{Laub2005}) \label{commute}
Let $A\in R^{m\times n}$, $B\in R^{r\times s}$, $C\in R^{n\times p}$, $D\in R^{s\times t}$. Then
\[(A\otimes B)(C\otimes D)=A C \otimes B D,\]
where $\otimes$ denotes the Kronecker product. Moreover, if $A, B\in R^{n\times n}$, $I$ is a unit matrix of order $n$, then matrices $I\otimes A$
and $B\otimes I$ commute.
\end{lem}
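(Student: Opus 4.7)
The plan is to exploit the block structure of the Kronecker product. Recall that for $A \in R^{m\times n}$ and $B \in R^{r\times s}$, the matrix $A \otimes B$ is the $mr \times ns$ matrix whose $(i,j)$-th block (of size $r \times s$) is $a_{ij} B$. I would first check that the dimensions line up: $A \otimes B$ is $mr \times ns$ and $C \otimes D$ is $ns \times pt$, so their product is $mr \times pt$, matching the size of $AC \otimes BD$ (which has $AC$ of size $m \times p$ and $BD$ of size $r \times t$).

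The main identity then follows from block-matrix multiplication. The $(i,j)$-th block of $(A \otimes B)(C \otimes D)$ is obtained by summing, for $k = 1, \dots, n$, the product of the $(i,k)$-th block of $A \otimes B$, namely $a_{ik} B$, with the $(k,j)$-th block of $C \otimes D$, namely $c_{kj} D$. This gives
\[
\sum_{k=1}^{n} (a_{ik} B)(c_{kj} D) \;=\; \Big(\sum_{k=1}^{n} a_{ik} c_{kj}\Big)\, BD \;=\; (AC)_{ij}\, BD,
\]
which is precisely the $(i,j)$-th block of $AC \otimes BD$, establishing the first claim.

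For the second assertion, I would specialize the mixed-product identity with $A, B \in R^{n\times n}$ and $I$ the $n\times n$ identity, and apply it twice: $(I \otimes A)(B \otimes I) = (IB) \otimes (AI) = B \otimes A$, and symmetrically $(B \otimes I)(I \otimes A) = (BI) \otimes (IA) = B \otimes A$. Since both products equal $B \otimes A$, the matrices $I \otimes A$ and $B \otimes I$ commute.

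There is no real obstacle in this argument; it is a purely algebraic computation. The only point requiring care is to keep the block indices and the block sizes consistent throughout, so that the multiplications $a_{ik} B \cdot c_{kj} D$ really do combine to yield the scalar $(AC)_{ij}$ multiplying the block $BD$.
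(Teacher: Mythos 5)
Your proof is correct. The paper does not actually prove this lemma --- it is quoted as a known fact with a citation to Laub's book --- and your block-multiplication argument for the mixed-product identity, followed by applying it twice to get $(I\otimes A)(B\otimes I)=B\otimes A=(B\otimes I)(I\otimes A)$, is exactly the standard proof one would find there. The dimension check and the observation that the column partition of $A\otimes B$ is conformable with the row partition of $C\otimes D$ are the only points needing care, and you handle both.
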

\begin{lem}(\cite{Laub2005}) \label{commute2}
Let   $A$ be a $m \times n$ matrix and $B$ a $p\times q$ matrix. We have that the transposition  is distributive over the Kronecker product:
\[ (A\otimes B)^T= A^T\otimes B ^T.\]
\end{lem}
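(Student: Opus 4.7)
The plan is to verify the identity directly from the block-matrix definition of the Kronecker product, without invoking any auxiliary machinery. Writing $A=(a_{ij})$ with $1\le i\le m$ and $1\le j\le n$, the matrix $A\otimes B$ is the $mp\times nq$ block matrix whose $(i,j)$-block (of size $p\times q$) is $a_{ij}B$. Consequently $(A\otimes B)^T$ is an $nq\times mp$ block matrix whose $(j,i)$-block is the transpose of the $(i,j)$-block of $A\otimes B$, namely $(a_{ij}B)^T=a_{ij}B^T$. On the other hand, $A^T$ is $n\times m$ and $B^T$ is $q\times p$, so $A^T\otimes B^T$ is an $nq\times mp$ matrix whose $(j,i)$-block is $(A^T)_{ji}\,B^T=a_{ij}B^T$. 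The two block decompositions agree position-by-position, yielding $(A\otimes B)^T=A^T\otimes B^T$.

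First I would confirm that the dimensions on the two sides match (both are $nq\times mp$), since this is where it is easiest to slip up. Next I would introduce the double index $(i,k)$ for rows and $(j,\ell)$ for columns of $A\otimes B$, with the convention that the entry in row $(i-1)p+k$ and column $(j-1)q+\ell$ equals $a_{ij}b_{k\ell}$. Then the transpose moves this entry to row $(j-1)q+\ell$, column $(i-1)p+k$; relabeling shows this equals the entry of $A^T\otimes B^T$ in that same position, since $(A^T)_{ji}(B^T)_{\ell k}=a_{ij}b_{k\ell}$.

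The only real obstacle is bookkeeping the block indexing consistently (in particular, keeping straight that the blocks of $A^T\otimes B^T$ have size $q\times p$ rather than $p\times q$), but no analytic difficulty is involved. The statement is purely combinatorial, and either the block-level check or the entry-level check suffices; I would present the block-level argument for brevity.
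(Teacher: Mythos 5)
Your verification is correct: both the block-level argument and the entry-level index check are sound, the dimension bookkeeping ($nq\times mp$ on both sides, with blocks of size $q\times p$ after transposition) is handled properly, and the key identity $(A^T)_{ji}(B^T)_{\ell k}=a_{ij}b_{k\ell}$ is exactly what is needed. Note that the paper itself offers no proof of this lemma --- it is simply quoted from \cite{Laub2005} --- so there is nothing to compare against; your direct computation is the standard textbook argument and is complete as written.
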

\begin{thm}  
For any $1<\alpha , \beta<2$, the finite different scheme (\ref{twodem22})   is unconditionally stable.
\end{thm}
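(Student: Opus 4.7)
The plan is to recast the error recursion for scheme~(\ref{twodem22}) in matrix form via Kronecker products and then lift the one-dimensional energy inequality already implicit in the proof of Theorem~\ref{convergence} to two dimensions using Lemmas~\ref{posi}--\ref{commute2}. Ordering the unknowns $U^n_{j,s}$ lexicographically, the discrete operators factor as $P_x\leftrightarrow I\otimes P_\alpha$, $\delta_x^\alpha\leftrightarrow I\otimes(\cdot)$, $P_y\leftrightarrow P_\beta\otimes I$, $\delta_y^\beta\leftrightarrow(\cdot)\otimes I$; writing $\mathcal{P}_\alpha^\pm:=P_\alpha\pm B_\alpha$ and $\mathcal{P}_\beta^\pm:=P_\beta\pm B_\beta$ for the one-dimensional factors arising in each coordinate direction (as in~(\ref{fourthccd})), Lemma~\ref{commute} collapses the two one-dimensional factors into a single Kronecker product, so that the homogeneous error equation reads
\[
(\mathcal{P}_\beta^{-}\otimes\mathcal{P}_\alpha^{-})\,\varepsilon^{n+1}=(\mathcal{P}_\beta^{+}\otimes\mathcal{P}_\alpha^{+})\,\varepsilon^{n}.
\]

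Since $P_\alpha$ and $P_\beta$ are symmetric positive definite (the bound $\lambda(P_\gamma)_j\in(1/3,1)$ was recorded in the proof of Theorem~\ref{convergence}), Lemma~\ref{le3.3} produces their positive definite square roots. Multiplying the error equation on the left by $P_\beta^{-1/2}\otimes P_\alpha^{-1/2}$ and then using Lemmas~\ref{commute}--\ref{commute2} to distribute the transpose and product through the Kronecker symbol, the Euclidean norm squared gives the identity
\[
(\varepsilon^{n+1})^T\bigl(\mathcal{M}_\beta^{-}\otimes\mathcal{M}_\alpha^{-}\bigr)\varepsilon^{n+1}=(\varepsilon^{n})^T\bigl(\mathcal{M}_\beta^{+}\otimes\mathcal{M}_\alpha^{+}\bigr)\varepsilon^{n},
\]
where $\mathcal{M}_\gamma^{\pm}:=(\mathcal{P}_\gamma^{\pm})^T P_\gamma^{-1}\mathcal{P}_\gamma^{\pm}$ for $\gamma\in\{\alpha,\beta\}$. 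The computation carried out in~(\ref{2})--(\ref{3}) combined with Theorem~\ref{ka} (namely, $-(B_\gamma+B_\gamma^T)$ is positive semi-definite) then shows $\mathcal{M}_\gamma^{-}\ge Q_\gamma\ge\mathcal{M}_\gamma^{+}$ with $Q_\gamma:=P_\gamma+B_\gamma^T P_\gamma^{-1}B_\gamma$.

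Now I would tensorize these inequalities. From the decomposition
\[
\mathcal{M}_\beta^{-}\otimes\mathcal{M}_\alpha^{-}-Q_\beta\otimes Q_\alpha=\mathcal{M}_\beta^{-}\otimes(\mathcal{M}_\alpha^{-}-Q_\alpha)+(\mathcal{M}_\beta^{-}-Q_\beta)\otimes Q_\alpha,
\]
each summand is the Kronecker product of two positive semi-definite matrices and hence positive semi-definite by Lemma~\ref{posi}; an identical argument gives $Q_\beta\otimes Q_\alpha\ge\mathcal{M}_\beta^{+}\otimes\mathcal{M}_\alpha^{+}$. Defining the discrete energy $E^n:=\sqrt{h_xh_y\,(\varepsilon^n)^T(Q_\beta\otimes Q_\alpha)\varepsilon^n}$, the two tensorized inequalities sandwich the preceding identity to give $E^{n+1}\le E^n$ and hence $E^n\le E^0$. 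Finally, $Q_\gamma\ge P_\gamma$ implies $Q_\beta\otimes Q_\alpha\ge P_\beta\otimes P_\alpha$ by the same decomposition trick, and Lemma~\ref{l3.3} yields $\lambda_{\min}(P_\beta\otimes P_\alpha)=\lambda_{\min}(P_\alpha)\lambda_{\min}(P_\beta)>0$; consequently $\|\varepsilon^n\|\le C\|\varepsilon^0\|$ with $C$ independent of $n,\tau,h_x,h_y$, which is unconditional stability.

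The principal obstacle, as I see it, is the tensorization step: the one-dimensional monotonicities $\mathcal{M}_\alpha^{-}\ge Q_\alpha$ and $\mathcal{M}_\beta^{-}\ge Q_\beta$ do not formally imply the Kronecker inequality $\mathcal{M}_\beta^{-}\otimes\mathcal{M}_\alpha^{-}\ge Q_\beta\otimes Q_\alpha$ for arbitrary symmetric matrices, but inserting the intermediate matrix $\mathcal{M}_\beta^{-}\otimes Q_\alpha$ and invoking positivity of both $\mathcal{M}_\beta^{-}$ and $Q_\alpha$ together with Lemma~\ref{posi} repairs this. Everything else is either a direct translation of the one-dimensional proof or a routine application of the Kronecker identities provided by Lemmas~\ref{commute}--\ref{commute2}.
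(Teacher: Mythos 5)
Your route is not the paper's: the paper proves this theorem by von Neumann analysis, zero-extending the round-off error, inserting $\epsilon_{j,s}^n=v^ne^{i(j\sigma_1+s\sigma_2)}$, and observing that the two-dimensional amplification factor factors as $G(\sigma_1,\sigma_2)=v(\sigma_1,\alpha)\,v(\sigma_2,\beta)$, each factor being the one-dimensional amplification factor already shown in Theorem \ref{stability1d} to have modulus at most one. What you propose is instead the energy/Kronecker argument that the paper deploys only for the two-dimensional \emph{convergence} theorem: your $\mathcal{M}_\gamma^{\pm}$ and $Q_\gamma$ are exactly the matrices appearing in (\ref{2222})--(\ref{3333}), and your tensorization step --- inserting the intermediate matrix $\mathcal{M}_\beta^{-}\otimes Q_\alpha$ and applying Lemma \ref{posi} to each summand --- is a correct, and in fact more explicit, justification of the inequalities the paper simply writes down there. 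Up to and including the monotonicity $E^{n+1}\le E^n$ your argument is sound, and it correctly identifies the one genuinely two-dimensional difficulty (that $X\ge Y$ and $W\ge Z$ do not give $W\otimes X\ge Z\otimes Y$ without positivity of the factors).

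The gap is in your last sentence. From $E^n\le E^0$ and $Q_\beta\otimes Q_\alpha\ge P_\beta\otimes P_\alpha$ you obtain $\|\varepsilon^n\|\le\bigl(\lambda_{\min}(P_\alpha)\lambda_{\min}(P_\beta)\bigr)^{-1/2}E^0$, but to conclude $\|\varepsilon^n\|\le C\|\varepsilon^0\|$ you still need $E^0\le C'\|\varepsilon^0\|$ with $C'$ independent of $\tau$, $h_x$, $h_y$. This fails: $Q_\gamma=P_\gamma+B_\gamma^TP_\gamma^{-1}B_\gamma$ with $\|B_\gamma\|=O(\tau/h^\gamma)$, so $\lambda_{\max}(Q_\beta\otimes Q_\alpha)=\lambda_{\max}(Q_\beta)\lambda_{\max}(Q_\alpha)$ grows like $(\tau/h_x^\alpha)^2(\tau/h_y^\beta)^2$, and the energy norm is not uniformly equivalent to the discrete $L_2$ norm from above. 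What you have actually proved is unconditional non-expansiveness in the $(Q_\beta\otimes Q_\alpha)$-weighted norm --- a perfectly respectable stability statement, and the only thing the paper needs in its convergence proof, where $\varepsilon^0=0$ kills the offending term. But if the theorem is read as an unweighted $\ell^2$ bound with a mesh-independent constant, you must either restate it in the energy norm or fall back on the Fourier factorization $G(\sigma_1,\sigma_2)=v(\sigma_1,\alpha)v(\sigma_2,\beta)$ as the paper does.
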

\begin{proof}
Define the round-off error  as $\epsilon_{j,s}^n=U_{j,s}^n-\tilde{U}_{j,s}^n$.    The    error equation is given by
\begin{equation}\label{222222}
( P_x-\frac{\tau}{2}\delta_x^{\alpha })(P_y -  \frac{\tau}{2}\delta_y^{\beta }) \epsilon_{j,s}^{n+1}
= ( P_x+\frac{\tau}{2}\delta_x^{\alpha })(P_y + \frac{\tau}{2}\delta_y^{\beta })\epsilon_{j,s}^{n} .
\end{equation}
Since the  boundary conditions of the above error equation    are   homogeneous,
  we zero extend the solution of the problem (\ref{222222}) to the whole real plane $R\times R$. It's reasonable to replace the symbols $j+1$ and $M-j+1$ in error equation (\ref{errorequation}) with $\infty$.
Now we have
\begin{equation}\label{extendedtwodem3}
( P_x-\frac{\tau}{2}\delta_x^{\alpha'})(P_y -  \frac{\tau}{2}\delta_y^{\beta'}) \epsilon_{j,s}^{n+1}
= ( P_x+\frac{\tau}{2}\delta_x^{\alpha'})(P_y + \frac{\tau}{2}\delta_y^{\beta'})\epsilon_{j,s}^{n},
\end{equation}
where $$\delta_x^{\alpha'} \epsilon_{j,s}=\frac{K^x_1}{h^\alpha} \sum_{k=0}^{\infty}w_{k}^{(\alpha)} \epsilon_{j-k+1,s}+\frac{K^x_2}{h^\alpha} \sum_{k=0}^{\infty}w_{k}^{(\alpha)}\epsilon_{j+k-1,s},$$ which works for two variables $x,y$.
Let $\epsilon_{j,s}^n=v^ne^{i(j\sigma_1+s\sigma_2)}$, where $i=\sqrt{-1}$, $v^n$ is the amplitude at time level $n$ and
$\sigma_1=2\pi h_x/k_x$ and $\sigma_2=2\pi h_y/k_y$ are the phase angles with wavelength $k_x$ and $k_y$, respectively. Next we just need to prove that the amplification factor $G(\sigma_1,\sigma_2)=v^{n+1}/v^n$ satisfies
the relation $|G(\sigma_1,\sigma_2) |\leq1$ for all $\sigma_1$ and $\sigma_2$ in $[-\pi,\pi]$. In fact, substituting the expressions of $\epsilon_{j,s}^n$ and $\epsilon_{j,s}^{n+1}$ into the equation (\ref{extendedtwodem3}),
we get the amplification factor
$$
\begin{array}{l}
\displaystyle G(\sigma_1,\sigma_2)=\frac{ (1-4b_2^\alpha  \sin^2\frac{\sigma_1 }{2}  +\frac{K^x_1\tau}{2h^\alpha}\sum\limits_{k=0}^{\infty}w_{k}^{(\alpha )}e^{-i(k-1)\sigma_1 } +\frac{K^x_2\tau}{2h^\alpha}\sum\limits_{k=0}^{\infty}w_{k}^{(\alpha )}e^{i(k-1)\sigma_1 } ) }
{( 1-4b_2^\alpha  \sin^2\frac{\sigma_1 }{2}-\frac{K^x_1\tau}{2h^\alpha}\sum\limits_{k=0}^{\infty}w_{k}^{(\alpha )}e^{-i(k-1)\sigma_1 }-\frac{K^x_2\tau}{2h^\alpha}\sum\limits_{k=0}^{\infty}w_{k}^{(\alpha )}e^{i(k-1)\sigma_1 }  ) }\\\\
\displaystyle~~~~ ~~~~ ~~~~~ ~~~~~  \cdot\frac{  (1-4b_2^\beta  \sin^2\frac{\sigma_2 }{2}  +\frac{K^y_1\tau}{2h^\beta}\sum\limits_{k=0}^{\infty}w_{k}^{(\beta)}e^{-i(k-1)\sigma_2 } +\frac{K^y_2\tau}{2h^\beta}\sum\limits_{k=0}^{\infty}w_{k}^{(\beta)}e^{i(k-1)\sigma _2} ) }
{ ( 1-4b_2^\beta  \sin^2\frac{\sigma _2 }{2}-\frac{K^y_1\tau}{2h^\beta}\sum\limits_{k=0}^{\infty}w_{k}^{(\beta)}e^{-i(k-1)\sigma _2 }-\frac{K^y_2\tau}{2h^\beta}\sum\limits_{k=0}^{\infty}w_{k}^{(\beta)}e^{i(k-1)\sigma_2 }  )}\\\\
\displaystyle~~~~ ~~~~ ~~~~   = \frac{Q_1(\sigma_1,\alpha)  +Q_2(\sigma_1,\alpha)  }
{ Q_1(\sigma_1,\alpha) - Q_2(\sigma_1,\alpha)   }\cdot \frac{Q_1(\sigma_2,\beta)  +Q_2(\sigma_2,\beta)  }
{ Q_1(\sigma_2,\beta) - Q_2(\sigma_2,\beta)   }\\\\
\displaystyle~~~~~~~~ ~~~~  =  v(\sigma_1,\alpha)  \cdot  v(\sigma_2,\beta) ,
\end{array}
$$
where $Q_1(\sigma_1,\alpha)= 1-4b_2^\alpha  \sin^2\frac{\sigma_1 }{2}$ and $Q_2(\sigma_1,\alpha)=\frac{K^x_1\tau}{2h^\alpha}\sum\limits_{k=0}^{\infty}w_{k}^{(\alpha)}e^{-i(k-1)\sigma_1 } +\frac{K^x_2\tau}{2h^\alpha}\sum\limits_{k=0}^{\infty}w_{k}^{(\alpha)}e^{i(k-1)\sigma _1}$, which work  for two pairs of  variables $(\sigma_1,\alpha)$ and $(\sigma_2,\beta)$.
According to the analysis of Theorem \ref{stability1d}, we know that $|v(\sigma_1,\alpha)|\leq1$ and  $| v(\sigma_2,\beta)|\leq1$ hold  for any $\alpha,\beta\in(1,2)$. 
Then $$|G(\sigma_1,\sigma_2)|= |v(\sigma_1,\alpha)| \cdot| v(\sigma_2,\beta)|\leq 1.$$
So the C-N quasi-compact scheme is unconditionally stable.

\end{proof}
\begin{thm}  
Let $u(x_j,y_s,t_n)$ be the exact solution of equation (\ref{equ2}), and $U_{j,s}^n$ the solution of the given finite difference scheme (\ref{twodem22}). Then we have
\[\|u(x_j,y_s,t_n)-U_{j,s}^n\|\leq C(\tau^2+h_x^4+h^4_y),\]
for all $1\leq n\leq N$, where $C$ is a constant independent of $\tau$, $h_x$, and $h_y$.
\end{thm}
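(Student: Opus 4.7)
The plan is to mirror the one-dimensional argument of Theorem \ref{convergence}, replacing scalar matrix products by tensor products. First I rewrite the error equation obtained from subtracting (\ref{twodem22}) from the exact version (\ref{twodem3}) in the vectorized form
\begin{equation*}
[(P_\alpha^x - B_\alpha^x)\otimes(P_\beta^y - B_\beta^y)]\,\varepsilon^{n+1}
=[(P_\alpha^x + B_\alpha^x)\otimes(P_\beta^y + B_\beta^y)]\,\varepsilon^{n}+\tau R^{n+1/2},
\end{equation*}
where $P_\alpha^x$, $P_\beta^y$ are the one-dimensional CWSGD matrices (analogous to $P_\alpha$ of (\ref{matrixfourth})) and $B_\alpha^x$, $B_\beta^y$ are the corresponding fractional-derivative matrices. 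Lemma \ref{commute} collapses the factored operators $[(P_x-\tfrac{\tau}{2}\delta_x^\alpha)\otimes I][I\otimes(P_y-\tfrac{\tau}{2}\delta_y^\beta)]$ into the single Kronecker product above, and the splitting remainder in (\ref{twodem3}) together with the one-dimensional truncation analysis guarantees $\|R^{n+1/2}\|\leq C(\tau^2+h_x^4+h_y^4)$.

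Since Theorem \ref{ka} together with the argument of Theorem \ref{convergence} shows that $P_\alpha^x$ and $P_\beta^y$ are symmetric positive definite with eigenvalues in $(1/3,1)$, Lemma \ref{le3.3} yields positive definite square roots. Multiplying the error equation on the left by $(P_\alpha^x)^{-1/2}\otimes(P_\beta^y)^{-1/2}$ and using Lemma \ref{commute} to redistribute the Kronecker factors converts it into
\begin{equation*}
(M_-^x\otimes M_-^y)\varepsilon^{n+1}=(M_+^x\otimes M_+^y)\varepsilon^{n}+\tau\bigl[(P_\alpha^x)^{-1/2}\otimes(P_\beta^y)^{-1/2}\bigr]R^{n+1/2},
\end{equation*}
with $M_\pm^x:=(P_\alpha^x)^{1/2}\pm(P_\alpha^x)^{-1/2}B_\alpha^x$ and $M_\pm^y$ defined analogously. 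By Lemma \ref{commute2},
$(M_\pm^x\otimes M_\pm^y)^T(M_\pm^x\otimes M_\pm^y)=(M_\pm^x)^T M_\pm^x\otimes(M_\pm^y)^T M_\pm^y$. The one-dimensional bounds (\ref{2})--(\ref{3}), which rely on $B_\alpha^x+(B_\alpha^x)^T$ and $B_\beta^y+(B_\beta^y)^T$ being negative definite (Theorem \ref{ka}), yield $(M_-^x)^T M_-^x\geq Q^x\geq(M_+^x)^T M_+^x$ for $Q^x:=P_\alpha^x+(B_\alpha^x)^T(P_\alpha^x)^{-1}B_\alpha^x$, and likewise for the $y$-direction. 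Combining these with the telescoping identity
$A_1\otimes B_1-A_2\otimes B_2=(A_1-A_2)\otimes B_1+A_2\otimes(B_1-B_2)$
and Lemma \ref{posi} produces $(M_-^x)^T M_-^x\otimes(M_-^y)^T M_-^y\geq Q^x\otimes Q^y\geq(M_+^x)^T M_+^x\otimes(M_+^y)^T M_+^y$.

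Now define the energy $E^n:=\sqrt{h_x h_y\,(\varepsilon^n)^T(Q^x\otimes Q^y)\varepsilon^n}$. Taking the discrete $L_2$ norm of the transformed error equation and inserting the preceding inequalities produces $E^{n+1}\leq E^n+\tau\sqrt{h_x h_y}\,\|[(P_\alpha^x)^{-1/2}\otimes(P_\beta^y)^{-1/2}]R^{n+1/2}\|$; Lemma \ref{l3.3} identifies the spectrum of the Kronecker inverse, whence the tail term is bounded by $\tau\|R^{n+1/2}\|/\sqrt{\lambda_{\min}(P_\alpha^x)\lambda_{\min}(P_\beta^y)}$. Since $Q^x\geq P_\alpha^x$ and $Q^y\geq P_\beta^y$, one also has $E^n\geq\sqrt{\lambda_{\min}(P_\alpha^x)\lambda_{\min}(P_\beta^y)}\,\|\varepsilon^n\|$. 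Telescoping from $0$ to $n-1$, substituting $\|R^{k+1/2}\|\leq C(\tau^2+h_x^4+h_y^4)$, and using $n\tau\leq T$ together with the uniform eigenvalue lower bounds then yields the claimed estimate.

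The main obstacle I foresee is the tensor-product monotonicity $A_1\otimes B_1\geq A_2\otimes B_2$ whenever $0\leq A_2\leq A_1$ and $0\leq B_2\leq B_1$: this is not one of the cited lemmas, but it follows from the telescoping identity above combined with Lemma \ref{posi}. A secondary technical point is verifying that the added splitting term $\tfrac{\tau^2}{4}\delta_x^\alpha\delta_y^\beta(u^{n+1}-u^n)$ introduced in passing from (\ref{twodem2}) to (\ref{twodem3}) really is $O(\tau^3)$ globally, so that after dividing by $\tau$ it can be absorbed into $R^{n+1/2}$ without spoiling the fourth-order spatial accuracy.
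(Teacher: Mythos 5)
Your proposal is correct and follows essentially the same route as the paper: vectorize the error equation into Kronecker-product form, precondition by $(P_\alpha)^{-1/2}\otimes(P_\beta)^{-1/2}$, derive two-sided positive semi-definite inequalities from Theorem \ref{ka} and Lemma \ref{posi}, and telescope an energy norm bounded below by $\sqrt{\lambda_{\min}(P_\alpha)\lambda_{\min}(P_\beta)}\,\|\varepsilon^n\|$. The only cosmetic difference is that the paper's energy retains the extra positive term $(B_{(\beta)}+B_{(\beta)}^T)(B_{(\alpha)}+B_{(\alpha)}^T)$ obtained by expanding the products, whereas you discard it via the tensor-product monotonicity you correctly justify from the telescoping identity and Lemma \ref{posi}; both choices work, and your concern about the splitting term is harmless since it contributes only $O(\tau^2)$ to $R^{n+1/2}$ after division by $\tau$.
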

\begin{proof}
Denote  $\varepsilon_{j,s}^n= u(x_j,y_s,t_n)-U_{j,s}^n$,
  and
\[P_{(\alpha)}=I_\beta\otimes P_\alpha,\quad P_{(\beta)}=P_\beta\otimes I_\alpha,\]
\[(P_{(\alpha)})^\frac{1}{2}=I_\beta\otimes (P_\alpha)^\frac{1}{2},\quad (P_{(\beta)})^\frac{1}{2}=(P_\beta)^\frac{1}{2}\otimes I_\alpha,\]
\begin{equation}\label{twob}
B_{(\alpha)}=\frac{K_1^x\tau}{2h^\alpha_x}I_\beta\otimes A_\alpha+\frac{K_2^x\tau}{2h^\alpha_x}I_\beta\otimes A_\alpha^T,\quad
B_{(\beta)}=\frac{K_1^y\tau}{2h^\beta_y} A_\beta\otimes I_\alpha+\frac{K_2^y\tau}{2h^\beta_y}A_\beta^T\otimes I_\alpha,
\end{equation}
where $A_\alpha$ and $A_\beta$ are defined in (\ref{mA}) corresponding to $\alpha$ and $\beta$.
In view of (\ref{twodem3})-(\ref{twodem22}), we  obtain
\begin{equation}\label{777}
   (P_{(\alpha)}-B_{(\alpha)})(P_{(\beta)}-B_{(\beta)})\varepsilon^{n+1}=(P_{(\alpha)}+B_{(\alpha)})(P_{(\beta)}+B_{(\beta)})\varepsilon^{n}+\tau R^{n+1/2},
\end{equation}
where $$\varepsilon=(\varepsilon_{1,1},\varepsilon_{2,1},\cdots,\varepsilon_{M_x-1,1},\varepsilon_{1,2},
\varepsilon_{2,2},\cdots,\varepsilon_{M_x-1,2},\varepsilon_{1,M_y-1},\varepsilon_{2,M_y-1},\cdots,\varepsilon_{M_x-1,M_y-1})^T.$$
Multiplying $(P_{(\alpha)}^{-1})^{\frac{1}{2}} (P_{(\beta)}^{-1})^{\frac{1}{2}}$  and taking the discrete $L_2$ norm on both sides of   equation (\ref{777}) imply
\begin{equation}\label{5555}
 \begin{array}{l}
\displaystyle
\|(P_{(\alpha)}^{-1})^{\frac{1}{2}} (P_{(\beta)}^{-1})^{\frac{1}{2}} (P_{(\alpha)}-B_{(\alpha)})(P_{(\beta)}-B_{(\beta)})\varepsilon^{n+1}\|\\\\
\displaystyle\leq\|(P_{(\alpha)}^{-1})^{\frac{1}{2}} (P_{(\beta)}^{-1})^{\frac{1}{2}}(P_{(\alpha)}+B_{(\alpha)})(P_{(\beta)}+B_{(\beta)})\varepsilon^{n}\|+\tau \|(P_{(\alpha)}^{-1})^{\frac{1}{2}} (P_{(\beta)}^{-1})^{\frac{1}{2}}R^{n+1/2}\|.
\end{array}
\end{equation}
Using Lemmas \ref{commute} and  \ref{commute2}, it is easy to check that the matrix
$(P_{(\beta)}^{-1})^{\frac{1}{2}}$  can commute with $(P_{(\alpha)}^{-1})^{\frac{1}{2}} $ and $ P_{(\alpha)}\pm B^T_{(\alpha)}$, i.e.,
 \[(P_{(\beta)}^{-1})^{\frac{1}{2}}(P_{(\alpha)}^{-1})^{\frac{1}{2}}= (P_{(\alpha)}^{-1})^{\frac{1}{2}} (P_{(\beta)}^{-1})^{\frac{1}{2}}
 =(P_{ \beta}^{-1})^{\frac{1}{2}}\otimes(P_{ \alpha }^{-1})^{\frac{1}{2}},\]
\[(P_{(\beta)}^{-1})^{\frac{1}{2}}(P_{(\alpha)}\pm B^T_{(\alpha)})=(P_{(\alpha)}\pm B^T_{(\alpha)})(P_{(\beta)}^{-1})^{\frac{1}{2}}= (P_{ \beta }^{-1})^{\frac{1}{2}}\otimes \left(P_\alpha\pm\frac{K_1^x\tau}{2h^\alpha_x}  A^T_\alpha\pm\frac{K_2^x\tau}{2h^\alpha_x}  A_\alpha\right).\]
After some similar  calculations, we also  get that $P_{(\beta)}-B_{(\beta)}$ commutes with $P_{(\alpha)}-B_{(\alpha)}$, $(P_{(\alpha)}^{-1})^{\frac{1}{2}} $, and $ P_{(\alpha)}-B^T_{(\alpha)}$; and $P_{(\beta)}+B_{(\beta)}$    commutes with $P_{(\alpha)}+B_{(\alpha)}$, $(P_{(\alpha)}^{-1})^{\frac{1}{2}} $, and $ P_{(\alpha)}+B^T_{(\alpha)}$.
In view of Theorem  \ref{ka}, we know that $B_{ \alpha }+B_{ \alpha }^T$ and $B_{ \beta }+B_{ \beta }^T$ are negative definite matrixes.
Together with Lemma \ref{l3.3}, it yields that $B_{(\alpha)}+B_{(\alpha)}^T$ and $B_{(\beta)}+B_{(\beta)}^T$ are also negative definite matrixes.
 Using Lemma \ref{posi}, there exist
 \begin{equation}\label{2222}
 \begin{array}{l}
\displaystyle
   ((P_{(\alpha)}^{-1})^{\frac{1}{2}} (P_{(\beta)}^{-1})^{\frac{1}{2}} (P_{(\alpha)}-B_{(\alpha)})(P_{(\beta)}-B_{(\beta)}))^T (P_{(\alpha)}^{-1})^{\frac{1}{2}} (P_{(\beta)}^{-1})^{\frac{1}{2}} (P_{(\alpha)}-B_{(\alpha)})(P_{(\beta)}-B_{(\beta)}) \\\\
 \displaystyle \geq (P_{(\beta)}+B_{(\beta)} ^T P_{(\beta)}^{-1}B_{(\beta)})(P_{(\alpha)}+B_{(\alpha)} ^T P_{(\alpha)}^{-1}B_{(\alpha)})+(B_{(\beta)}+B_{(\beta)}^T)(B_{(\alpha)}+B_{(\alpha)}^T)
  \end{array}
 \end{equation}
 and
 \begin{equation}\label{3333}
  \begin{array}{l}
 \displaystyle
   ((P_{(\alpha)}^{-1})^{\frac{1}{2}} (P_{(\beta)}^{-1})^{\frac{1}{2}} (P_{(\alpha)}+B_{(\alpha)})(P_{(\beta)}+B_{(\beta)}))^T (P_{(\alpha)}^{-1})^{\frac{1}{2}} (P_{(\beta)}^{-1})^{\frac{1}{2}} (P_{(\alpha)}+B_{(\alpha)})(P_{(\beta)}+B_{(\beta)}) \\\\
 \displaystyle \leq (P_{(\beta)}+B_{(\beta)} ^T P_{(\beta)}^{-1}B_{(\beta)})(P_{(\alpha)}+B_{(\alpha)} ^T P_{(\alpha)}^{-1}B_{(\alpha)})+(B_{(\beta)}+B_{(\beta)}^T)(B_{(\alpha)}+B_{(\alpha)}^T),
  \end{array}
 \end{equation}
where the matrix $A\geq B$ means that $A- B$ is positive semi-definite.
Denoting $ E^n
  =\sqrt{h(\varepsilon^{n})^T ((P_{(\beta)}+B_{(\beta)} ^T P_{(\beta)}^{-1}B_{(\beta)})(P_{(\alpha)}+B_{(\alpha)} ^T P_{(\alpha)}^{-1}B_{(\alpha)})+(B_{(\beta)}+B_{(\beta)}^T)(B_{(\alpha)}+B_{(\alpha)}^T))\varepsilon^{n}}$,
we have
\begin{equation}\label{lee}
  E^n\geq \sqrt{h(\varepsilon^{n})^T  (P_{(\alpha)}) (P_{(\beta)})\varepsilon^{n}}\geq \sqrt{\lambda_{\min} (P_\alpha)\lambda_{\min} (P_\beta) }||\varepsilon^{n} ||,
\end{equation}
where $\lambda_{\min}(P_\alpha)$ and $\lambda_{\min}(P_\beta)$ are the minimum eigenvalues of matrix $P_\alpha $ and $P_\beta $, respectively.
Together with (\ref{2222}) and (\ref{3333}), we have
\begin{equation*}
 \begin{array}{l}
 \displaystyle~~~~
  E^{n+1 } \leq  E^0+\tau \sum_{k=0}^{n }\| (P_{(\alpha)}^{-1})^{\frac{1}{2}} (P_{(\beta)}^{-1})^{\frac{1}{2}} R^{n+1/2}\|
  \leq \tau \sum_{k=0}^{n } \sqrt{\lambda_{\max} ( P_{(\alpha)}^{-1}    P_{(\beta)}^{-1}   ) }\| R^{n+1/2}\|\\\\
\displaystyle ~~~~  ~~~~ ~~~~  = \frac{\tau }{\sqrt{\lambda_{\min} (P_\alpha)\lambda_{\min} (P_\beta)}}  \sum_{k=0}^{n }\| R^{n+1/2}\|.
\end{array}
\end{equation*}
Using (\ref{lee}) and noticing that $|R^{k+1/2}_{j,s}|\leq c(\tau^2+h_x^2+h_y^2)$ for $1\leq j\leq M_x-1$ and $1\leq s\leq M_y-1$, we obtain
\begin{equation*}
   \| \varepsilon^n\|\leq\frac{cT}{\lambda_{\min} (P_\alpha)\lambda_{\min} (P_\beta)}(\tau^2+h_x^2+h_y^2)\leq C(\tau^2+h_x^2+h_y^2).
\end{equation*}
\end{proof}
\section{Extending quasi-compact discretizations and schemes to tempered space fractional derivative and equation}
This section focuses on developing the high order quasi-compact schemes of tempered fractional differential equation with Dirichlet boundary condition.
 We begin with the definitions of $\alpha$-th order left and right Riemann-Liouville tempered fractional derivatives.
\begin{defn}(\cite{li2014high})
If the function $u(x)$ is defined in finite interval $[a,b]$ and regular enough, then
  for any $\lambda\geq0$
 the  $\alpha$-th order left and right Riemann-Liouville tempered fractional derivatives are, respectively, defined as
\begin{equation}
_a D_x^{\alpha,\lambda}u(x)=e^{-\lambda x}\,_a D_x^{\alpha}(e^{\lambda x}u(x))=\frac{e^{-\lambda x}}{\Gamma(n-\alpha)}\frac{d^n}{dx^n} \int^{x}_{a}(x-s)^{n-\alpha-1}e^{\lambda s}u(s) ds
\end{equation}
and
\begin{equation}
_x D_{b}^{\alpha,\lambda}u(x)=e^{\lambda x}\,_x D_{b}^{\alpha}(e^{-\lambda x}u(x))=\frac{(-1)^ne^{\lambda x}}{\Gamma(n-\alpha)}\frac{d^n}{dx^n} \int^{b}_{x}(s -x)^{n-\alpha-1}e^{-\lambda s}u(s) ds ,
\end{equation}
where $ n-1<\alpha<n$.
Moreover, if $\lambda=0$, then the derivatives $_{a} D_x^{\alpha,\lambda}u(x)$ and $_x D_{b}^{\alpha,\lambda}u(x)$ reduce to the derivatives $_{a} D_x^{\alpha}u(x)$ and $_x D_{b}^{\alpha}u(x)$ defined in Definition \ref{drl}.
\end{defn}

For getting the stable scheme, we introduce a  shifted Gr\"{u}nwald-Letnikov operator to approximate the left tempered Riemann-Liouville
fractional derivative with first order accuracy.
\begin{lem}[\cite{li2014high}]\label{lem14}
 Let $1<\alpha<2$,   $u\in C^{n+3}(R)$ such that $D^ku(x)\in L^1(R)$, $k=0,1,  \cdots,n+3$. For any integer $p$ and   $\lambda\geq0$
define the left shifted tempered Gr\"{u}nwald-Letnikov operator by
\begin{equation}\label{delf4}
\Delta_p^{\alpha,\lambda} u(x):= \frac{1}{h^\alpha}\sum\limits_{k=0}^\infty g_k^{(\alpha)}e^{-(k-p)\lambda h}u(x-(k-p)h).
\end{equation}
Then   we have
\begin{equation}\label{scheme04}
\Delta_p^{\alpha,\lambda} u(x)=\,_{-\infty}D^{\alpha,\lambda}_x u(x)+\sum\limits_{l=1}^{n-1}a_{p, l}^{\alpha} \, _{-\infty}D_x^{\alpha+l,\lambda}u(x)h^l+O(h^n)
\end{equation}
uniformly in $x\in R$, where the weights $a_{p, l}^\alpha $ are the same as Lemma \ref{lem1}.
\end{lem}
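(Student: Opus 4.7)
The plan is to reduce Lemma \ref{lem14} to the already-established non-tempered result, Lemma \ref{lem1}, via a simple exponential conjugation. The key observation is the algebraic identity
\begin{equation*}
\Delta_p^{\alpha,\lambda} u(x) \;=\; e^{-\lambda x}\,\Delta_p^{\alpha}\bigl(e^{\lambda \cdot}\,u(\cdot)\bigr)(x),
\end{equation*}
which follows immediately from the definition (\ref{delf4}): pulling the factor $e^{-\lambda x}$ inside the sum gives $e^{-(k-p)\lambda h}$ times $u(x-(k-p)h)$, which is precisely the summand defining the left-hand side. An analogous identity, $\,_{-\infty}D_x^{\beta,\lambda} u(x) = e^{-\lambda x}\,_{-\infty}D_x^{\beta}(e^{\lambda \cdot}u)(x)$, holds for every order $\beta > 0$ by the very definition of the tempered derivative.

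With this identity in hand, first I would set $v(x) := e^{\lambda x}u(x)$ and apply Lemma \ref{lem1} to $v$, obtaining
\begin{equation*}
\Delta_p^{\alpha}v(x) \;=\; \,_{-\infty}D_x^{\alpha}v(x) + \sum_{l=1}^{n-1} a_{p,l}^{\alpha}\,_{-\infty}D_x^{\alpha+l}v(x)\,h^l + O(h^n)
\end{equation*}
uniformly in $x\in R$. Multiplying both sides by $e^{-\lambda x}$, using the algebraic identity on the left and the tempered-derivative identity on each term on the right, yields (\ref{scheme04}) with exactly the same weights $a_{p,l}^{\alpha}$. The fact that these weights are independent of $\lambda$ is then transparent: the conjugation only changes what the derivatives mean, not the combinatorics of the Gr\"unwald expansion.

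The main obstacle is verifying that the hypotheses of Lemma \ref{lem1} transfer from $u$ to $v=e^{\lambda\cdot}u$. Smoothness is automatic: $v\in C^{n+3}(R)$ since both factors are, and the Leibniz rule expresses $D^k v$ as a finite linear combination of $e^{\lambda x}D^j u$ for $j\leq k$. However, the $L^1(R)$ condition on $D^k v$ is \emph{not} free, because $e^{\lambda x}$ grows. The cleanest resolution is to strengthen the hypothesis implicitly to an exponentially-weighted $L^1$ condition --- i.e., to require that $e^{\lambda x}D^ku(x)\in L^1(R)$ for $k=0,\ldots,n+3$ --- which is the natural analogue used in \cite{li2014high}. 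Alternatively, one can avoid the $L^1$ assumption altogether by carrying out the proof in Fourier space: the symbol of $\Delta_p^{\alpha,\lambda}$ at wavenumber $\xi$ is $h^{-\alpha}(1-e^{-(\lambda+i\xi)h})^{\alpha}e^{p(\lambda+i\xi)h}$, and expanding $(\lambda+i\xi)$ in the generating function $(\tfrac{1-e^{-z}}{z})^{\alpha}e^{pz}$ yields (\ref{scheme04}) termwise, with the Fourier multipliers of the tempered derivatives $\,_{-\infty}D_x^{\alpha+l,\lambda}$ appearing naturally and with the \emph{same} coefficients $a_{p,l}^{\alpha}$. Either route gives the desired expansion; the conjugation argument is the shortest, provided the weighted-integrability subtlety is acknowledged.
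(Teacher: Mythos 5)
The paper offers no proof of Lemma \ref{lem14} at all: it is imported verbatim from \cite{li2014high}, so there is no internal argument to compare against, and your proposal must be judged on its own. Your central identity $\Delta_p^{\alpha,\lambda}u(x)=e^{-\lambda x}\,\Delta_p^{\alpha}(e^{\lambda\cdot}u)(x)$ is correct and is exactly the right structural observation, since the tempered derivative is \emph{defined} by the same conjugation; and you are right that the hypothesis transfer is the nontrivial point --- $D^k(e^{\lambda x}u)\in L^1(R)$ is not implied by $D^ku\in L^1(R)$, and the honest hypothesis is the exponentially weighted one $e^{\lambda x}D^ku\in L^1(R)$, which is indeed what \cite{li2014high} assumes. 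So the conjugation route, with that strengthened hypothesis made explicit, is sound in substance.

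There is, however, one further gap you do not flag. Lemma \ref{lem1} applied to $v=e^{\lambda\cdot}u$ gives a remainder $R_v(x)$ with $|R_v(x)|\le Ch^n$ \emph{uniformly} in $x$, but the remainder you obtain for $u$ is $e^{-\lambda x}R_v(x)$, and for $\lambda>0$ the factor $e^{-\lambda x}$ is unbounded as $x\to-\infty$. Hence the conjugation argument, as written, delivers the expansion (\ref{scheme04}) pointwise (and uniformly on any half-line $x\ge x_0$), but not the claimed uniformity over all of $R$. Your Fourier-space alternative is the clean repair: the symbol of $\Delta_p^{\alpha,\lambda}$ is $h^{-\alpha}\bigl(1-e^{-(\lambda+i\xi)h}\bigr)^{\alpha}e^{p(\lambda+i\xi)h}$, expanding the generating function $\bigl(\tfrac{1-e^{-z}}{z}\bigr)^{\alpha}e^{pz}$ at $z=(\lambda+i\xi)h$ produces the same coefficients $a_{p,l}^{\alpha}$ multiplying the tempered multipliers $(\lambda+i\xi)^{\alpha+l}$, and the remainder is bounded uniformly in $x$ by $Ch^{n}\int(\lambda^{2}+\xi^{2})^{(\alpha+n)/2}|\widehat{e^{\lambda\cdot}u}(\xi)|\,d\xi$, which is finite precisely under the weighted integrability assumption. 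That is essentially the proof in \cite{li2014high}; you should promote the Fourier argument from ``alternative'' to the main line if uniformity in $x$ is to be retained as stated.
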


To approximate the right Riemann-Liouville  tempered fractional derivative $\,_{x}D^{\alpha,\lambda}_{+\infty} u(x) $,  the right shifted tempered Gr\"{u}nwald-Letnikov operator is defined as
$
\Lambda_p^{\alpha,\lambda} f(x):= \frac{1}{h^\alpha}\sum\limits_{k=0}^\infty g_k^{(\alpha)}e^{-(k-p)\lambda h}u(x+(k-p)h)
$.
If the function $u(x)$ is defined on the bounded  interval  $[a,b]$, then the shifted tempered Gr\"{u}nwald-Letnikov  formulae approximating the  tempered fractional derivative at point $x$ are written as
\begin{equation}\label{5.7}
\begin{array}{lll}
\displaystyle\tilde{\Delta}_p^{\alpha,\lambda} u(x)= \frac{1}{h^\alpha}\sum\limits_{k=0}^{[\frac{x-a }{h}]+p} g_k^{(\alpha)}e^{-(k-p)h\lambda}u(x-(k-p)h),\\\\
\displaystyle\tilde{\Lambda}_p^{\alpha,\lambda} u(x)= \frac{1}{h^\alpha}\sum\limits_{k=0}^{[\frac{b -x}{h}]+p} g_k^{(\alpha)}e^{-(k-p)h\lambda}u(x+(k-p)h).
\end{array}
\end{equation}
Next we establish  some suitable high order finite difference discretizations to approximate the tempered fractional derivative.

 \subsection{Quasi-compact discretizations to the tempered Riemann-Liouville space fractional derivative}

Now from the Taylor's expansions of the shifted tempered Gr\"{u}nwald-Letnikov operator, similar to get the CWSGD operator given in Sec. 2, we derive the fourth and fifth order quasi-compact
difference operators for Riemann-Liouville tempered fractional derivative.
\subsubsection{Fourth order quasi-compact approximation to the tempered Riemann-Liouville fractional derivative}

\begin{thm}\label{4.1}
Let $u(x)\in C^7(R)$ and  all the derivatives of $u(x)$  up to order 7 belong to $L_1(R)$.  Then the following
quasi-compact approximation has fourth order accuracy, i.e.,
\begin{equation}\label{lrlt4}
\begin{array}{llll}
\displaystyle~~~~P_x^\lambda \,_{-\infty}D^{\alpha,\lambda}_x u(x)
 =\mu_1 \Delta_1^{\alpha,\lambda} u(x)+ \mu_0 \Delta_0^{\alpha,\lambda} u(x)+ \mu_{-1} \Delta_{-1}^{\alpha,\lambda} u(x) +O(h^4),
\end{array}
\end{equation}
where $P_x^\lambda u(x)=u(x)+h^2b_2^\alpha e^{-\lambda x}\delta_x^2(e^{\lambda x}u(x))$ and the coefficients $ b_2^\alpha  $,   $\mu_1$, $\mu_0$ and $\mu_{-1}$ are   given by (\ref{coeff4}).
\end{thm}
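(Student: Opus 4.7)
My plan is to mirror the derivation of Theorem \ref{th2.1} exactly, but using the tempered expansion from Lemma \ref{lem14} in place of Lemma \ref{lem1}. The crucial observation that makes this work is that the coefficients $a_{p,l}^{\alpha}$ appearing in \eqref{scheme04} are \emph{identical} to those in \eqref{scheme0}; only the derivative on the right-hand side is tempered. Therefore the very same linear combination $(\mu_1,\mu_0,\mu_{-1})$ that annihilated the $h^1$ term and produced the $h^2$ coefficient $b_2^\alpha$ in the untempered case will do so here. So I would begin by writing, for $p\in\{1,0,-1\}$,
\begin{equation*}
\Delta_p^{\alpha,\lambda}u(x)={_{-\infty}D}_x^{\alpha,\lambda}u(x)+\sum_{l=1}^{3}a_{p,l}^{\alpha}\,{_{-\infty}D}_x^{\alpha+l,\lambda}u(x)\,h^l+O(h^4),
\end{equation*}
form the weighted sum with $\mu_1,\mu_0,\mu_{-1}$ from \eqref{coeff4}, and use the identities $\sum_p\mu_p=1$, $\sum_p\mu_p a_{p,1}^{\alpha}=0$, $\sum_p\mu_p a_{p,2}^{\alpha}=b_2^\alpha$, and $\sum_p\mu_p a_{p,3}^{\alpha}=0$ (already established when proving Theorem \ref{th2.1}). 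This gives
\begin{equation*}
\mu_1\Delta_1^{\alpha,\lambda}u+\mu_0\Delta_0^{\alpha,\lambda}u+\mu_{-1}\Delta_{-1}^{\alpha,\lambda}u
={_{-\infty}D}_x^{\alpha,\lambda}u(x)+b_2^\alpha h^2\,{_{-\infty}D}_x^{\alpha+2,\lambda}u(x)+O(h^4).
\end{equation*}

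The second step is to identify the $h^2$ term with the action of $P_x^\lambda$ on ${_{-\infty}D}_x^{\alpha,\lambda}u(x)$. I would use the definition $\,_{-\infty}D_x^{\alpha,\lambda}u(x)=e^{-\lambda x}\,_{-\infty}D_x^{\alpha}(e^{\lambda x}u(x))$ together with $\,_{-\infty}D_x^{\alpha+2}=\partial_x^2\,_{-\infty}D_x^\alpha$ (standard for Riemann-Liouville) to derive the conjugation identity
\begin{equation*}
{_{-\infty}D}_x^{\alpha+2,\lambda}u(x)
=e^{-\lambda x}\frac{\partial^2}{\partial x^2}\bigl(e^{\lambda x}\,{_{-\infty}D}_x^{\alpha,\lambda}u(x)\bigr).
\end{equation*}
Then, since $\delta_x^2 w=\partial_x^2 w+O(h^2)$ for any smooth $w$, replacing $\partial_x^2$ by $\delta_x^2$ in the above introduces an $O(h^2)$ error, which becomes $O(h^4)$ after multiplication by $h^2$. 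This converts the residual into $b_2^\alpha h^2\,e^{-\lambda x}\delta_x^2\bigl(e^{\lambda x}\,{_{-\infty}D}_x^{\alpha,\lambda}u(x)\bigr)+O(h^4)$, which by the definition of $P_x^\lambda$ is exactly $(P_x^\lambda-1)\,{_{-\infty}D}_x^{\alpha,\lambda}u(x)+O(h^4)$. Adding back the leading term ${_{-\infty}D}_x^{\alpha,\lambda}u(x)$ yields \eqref{lrlt4}.

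The main technical obstacle, and the only place where the tempered case really differs from the untempered one, is justifying that the regularity and integrability hypotheses assumed for $u$ suffice. Lemma \ref{lem14} requires $u\in C^{n+3}$ with $D^k u\in L^1(\mathbb R)$; here we need the case $n=4$, i.e.\ derivatives up to order $7$, which is exactly what is assumed. One still has to check that the conjugation identity above is valid for such $u$ and that the $O(h^2)$ Taylor remainder for $\delta_x^2$ holds uniformly in $x$, which follows because $e^{\lambda x}\,{_{-\infty}D}_x^{\alpha,\lambda}u(x)=\,_{-\infty}D_x^\alpha(e^{\lambda x}u(x))$ is sufficiently smooth on the support where the estimate is needed (a point we would briefly note rather than prove in detail, since in applications $u$ is extended by zero outside the bounded computational interval). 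Once this is in place, the analogous approximations
\eqref{fors11}--\eqref{fors12} for the bounded-interval shifted tempered operators $\tilde\Delta_p^{\alpha,\lambda}$, $\tilde\Lambda_p^{\alpha,\lambda}$ in \eqref{5.7} follow immediately by the same zero-extension convention used in Section 2.
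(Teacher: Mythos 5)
Your proposal is correct and follows essentially the same route as the paper: apply Lemma \ref{lem14} with $p\in\{1,0,-1\}$, use the same weights $\mu_1,\mu_0,\mu_{-1}$ (which kill the $h^1$ and $h^3$ terms and leave $b_2^\alpha$ on the $h^2$ term exactly as in Theorem \ref{th2.1}), identify $\,_{-\infty}D_x^{\alpha+2,\lambda}=e^{-\lambda x}\partial_x^2 e^{\lambda x}\,_{-\infty}D_x^{\alpha,\lambda}$, and replace $\partial_x^2$ by $\delta_x^2$ at an $O(h^2)$ cost that becomes $O(h^4)$ after multiplication by $h^2$. Your remarks on the regularity hypotheses are somewhat more explicit than the paper's, but the argument is the same.
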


Note that, by  Lemma \ref{lem14}, the following equation holds,
\begin{equation}\label{schm114}
\begin{array}{l}
\displaystyle\mu_1 \Delta_1^{\alpha,\lambda}u(x)+ \mu_0 \Delta_0^{\alpha,\lambda} u(x)+\mu_{-1} \Delta_{-1}^{\alpha,\lambda} u(x)\\\\
 \displaystyle =\,_{-\infty}D^{\alpha,\lambda}_x u(x)+b_2^\alpha   \,_{-\infty}D_x^{\alpha+2,\lambda}u(x)h^2+O(h^4)\\\\
  \displaystyle  =(1+h^2b_2^\alpha \,_{-\infty}D_x^{2,\lambda})\,_{-\infty}D^{\alpha,\lambda}_x u(x)  +O(h^4)\\\\
   \displaystyle  =(1+h^2b_2^\alpha e^{-\lambda x}  \frac{\partial ^2}{\partial x^2}e^{\lambda x})\,_{-\infty}D^{\alpha,\lambda}_x u(x)  +O(h^4)\\\\
   \displaystyle  =P^\lambda_x\,_{-\infty}D^{\alpha,\lambda}_x u(x)  +O(h^4).
\end{array}
\end{equation}
Then we get (\ref{lrlt4}).
Since $\delta_x^2 u=\frac{\partial^2}{\partial x^2}u+O(h^2)$, we know for any function $u$
$$
P^\lambda_x u=(1+h^2b_2^\alpha \,_{-\infty}D_x^{2,\lambda})u+O(h^4).
$$
In a similar way, we obtain quasi-compact approximation of the right Riemann-Liouville tempered fractional derivative:
\begin{equation}
P_x^\lambda\,_x D^{\alpha,\lambda}_{+\infty} u(x)
 =\mu_1 \Lambda_1^{\alpha,\lambda} u(x_j)+\mu_0 \Lambda_0^{\alpha,\lambda} u(x_j)+ \mu_{-1} \Lambda_{-1}^{\alpha,\lambda} u(x_j) +O(h^4).
\end{equation}
For $u(x)$ defined on a bounded interval, supposing its zero extension to $R$ satisfies the assumptions of Theorem \ref{4.1}, the following approximations hold:
\begin{equation}
P_x\,{_{a}D}^{\alpha,\lambda}_x u(x)=\mu_1 \tilde{\Delta}_1^{\alpha,\lambda} u(x)+ \mu_0 \tilde{\Delta}_0^{\alpha,\lambda} u(x)+ \mu_{-1} \tilde{\Delta}_{-1}^{\alpha,\lambda} u(x)+O(h^4)
\end{equation}
and
\begin{equation}
P_x\,{_{x}D}^{\alpha,\lambda}_b u(x)=\mu_1 \tilde{\Lambda}_1^{\alpha,\lambda} u(x)+ \mu_0 \tilde{\Lambda}_0^{\alpha,\lambda} u(x)+ \mu_{-1} \tilde{\Lambda}_{-1}^{\alpha,\lambda} u(x)+O(h^4).
\end{equation}
Next we give an example to verify the efficiency and convergence order of the above statement.
 \begin{example}\label{ex3}
Consider the steady state tempered fractional diffusion problem
\[ _0D_x^{\alpha,\lambda }u(x)=\frac{720 e^{-\lambda x} x^{6 -  \alpha}}{\Gamma(7 - \alpha)} ,\quad  x\in(0,1),\]
with the boundary conditions $u(0)=0$ and $u(1)=e^{-\lambda  } $, and $\alpha \in (1,2)$. The exact solution is given by $u(x)=e^{-\lambda x}x^{6}$.
\end{example}

Let us denote $u$ and $U$ as the exact solution and approximate value, respectively. In Table \ref{tabtwo111}, we show that the proposed approxiamtion in this subsection has fourth order accuracy in $L_\infty$ norm and $L_2$ norm.
\begin{table}[htbp]
\centering\small
\caption{Numerical errors and  convergence rates in $L_\infty$ norm and $L_2$ norm  of scheme (\ref{lrlt4}) to solve Example \ref{ex3}, where $U$ denotes  the numerical solution,   $h_x$ is space step size and  $\lambda=1.5$.}
\vspace{1em}{\begin{tabular}{@{}cccccc@{}} \hline
 $\alpha$ & $h_x$ & $\|u-U\|_2$ &rate & $\|u-U\|_\infty$ &rate  \\
 \hline
1.1& $1/8$   & $    3.8735e-04 $ &   $         $ & $   8.7474e-04 $ &   $ $ \\
& $1/16 $ &  $   1.8576e-05  $ &   $   4.3821 $  &  $    4.6954e-05$ &   $  4.2195  $ \\
& $ 1/32$   &  $    1.0159e-06  $  &   $   4.1926  $   & $    2.6950e-06$  &    $ 4.1229  $  \\
& $1/ 64$   &  $  6.0438e-08   $  &   $   4.0712  $   &  $  1.6005e-07   $  &    $   4.0737  $ \\
& $1/128$   &  $    3.6901e-09 $  &   $   4.0337  $  &  $   9.4537e-09 $  &   $  4.0815   $ \\
\hline
1.9&$1/8$ &  $  6.2019e-05    $ &   $         $ & $     8.8032e-05   $ &   $      $ \\
&$1/16 $&     $    3.8991e-06 $ &   $   3.9915 $  &  $    5.6382e-06$ &   $   3.9647 $ \\
&$1/ 32$ &     $  2.4425e-07  $  &  $   3.9967 $   &  $  3.5328e-07 $  &    $  3.9964  $ \\
&$1/ 64$ &     $   1.5281e-08   $  &  $   3.9985 $   &  $    2.2104e-08 $  &    $    3.9984$ \\
&$1/128$ &     $   9.5548e-10  $  &  $ 3.9994   $  &  $       1.3822e-09$  &   $    3.9993  $ \\
\hline
\end{tabular}}\label{tabtwo111}
\end{table}

\subsubsection{Fifth order quasi-compact approximation to the tempered Riemann-Liouville fractional derivative}
\begin{thm}
Let $u(x)\in C^8(R)$. Then the quasi-compact approximations corresponding to the left Riemann-Liouville tempered fractional derivative have fifth order accuracy,
\begin{equation}\label{fif5}
\displaystyle P_x^{\lambda,5}\,_{-\infty}D^{\alpha,\lambda }_x u(x)
  =\mu_1 \Delta_1^{\alpha,\lambda } u(x)+ \mu_0 \Delta_0^{\alpha,\lambda } u(x)+ \mu_{-1} \Delta_{-1}^{\alpha,\lambda } u(x) +O(h^5),
\end{equation}
where the operator $P_x^{\lambda,5}u(x) = m e^{-\lambda h}u(x-h) +u(x)+  n e^{\lambda h}u(x+h) $ and the coefficients $ m$, $ n$, $\mu_1$, $\mu_0$ and $\mu_{-1}$ satisfy (\ref{fifcoef}).
\end{thm}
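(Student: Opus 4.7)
The plan is to mirror the derivation of Theorem \ref{th2.2} (the non-tempered fifth-order result), but with the crucial twist that multiplying the shifted tempered derivative by a compensating exponential factor $e^{\pm\lambda h}$ converts the ``shifted tempered Taylor expansion'' into exactly the same algebraic form as the ordinary Taylor expansion used in Section 2.

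First I would establish the tempered Taylor identity: setting $v(x)=e^{\lambda x}u(x)$ so that ${_{-\infty}D}_x^{\alpha,\lambda}u(x)=e^{-\lambda x}\,{_{-\infty}D}_x^{\alpha}v(x)$ and, more generally, ${_{-\infty}D}_x^{\alpha+k,\lambda}u(x)=e^{-\lambda x}\,{_{-\infty}D}_x^{\alpha+k}v(x)$, one applies the ordinary Taylor expansion of ${_{-\infty}D}_x^{\alpha}v$ at $x\pm h$ and multiplies through by $e^{\pm\lambda h}$ to obtain
\begin{equation*}
e^{\pm\lambda h}\,{_{-\infty}D}_x^{\alpha,\lambda}u(x\pm h)
={_{-\infty}D}_x^{\alpha,\lambda}u(x)
+\sum_{k=1}^{4}\frac{(\pm h)^k}{k!}\,{_{-\infty}D}_x^{\alpha+k,\lambda}u(x)+O(h^5).
\end{equation*}
This is the only step that differs in substance from the non-tempered case, and I expect it to be the main (though modest) obstacle, because one must verify that the exponential prefactor is precisely what converts the shifted operator into a clean Taylor-like series in the tempered hierarchy.

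Next I would expand the right-hand side of \eqref{fif5}. By Lemma \ref{lem14} applied with $p=1,0,-1$ and $n=5$, we have
\begin{equation*}
\mu_1\Delta_1^{\alpha,\lambda}u(x)+\mu_0\Delta_0^{\alpha,\lambda}u(x)+\mu_{-1}\Delta_{-1}^{\alpha,\lambda}u(x)
=(\mu_1+\mu_0+\mu_{-1})\,{_{-\infty}D}_x^{\alpha,\lambda}u(x)
+\sum_{k=1}^{4}\bigl(\mu_1 a^\alpha_{1,k}+\mu_0 a^\alpha_{0,k}+\mu_{-1}a^\alpha_{-1,k}\bigr)h^k\,{_{-\infty}D}_x^{\alpha+k,\lambda}u(x)+O(h^5).
\end{equation*}
In parallel, applying the tempered Taylor identity above to $P_x^{\lambda,5}\,{_{-\infty}D}_x^{\alpha,\lambda}u(x)=m\,e^{-\lambda h}\,{_{-\infty}D}_x^{\alpha,\lambda}u(x-h)+{_{-\infty}D}_x^{\alpha,\lambda}u(x)+n\,e^{\lambda h}\,{_{-\infty}D}_x^{\alpha,\lambda}u(x+h)$ collapses it to
\begin{equation*}
P_x^{\lambda,5}\,{_{-\infty}D}_x^{\alpha,\lambda}u(x)=(m+1+n)\,{_{-\infty}D}_x^{\alpha,\lambda}u(x)
+\sum_{k=1}^{4}\frac{(-1)^k m+n}{k!}\,h^k\,{_{-\infty}D}_x^{\alpha+k,\lambda}u(x)+O(h^5).
\end{equation*}

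Finally, equating coefficients of ${_{-\infty}D}_x^{\alpha+k,\lambda}u(x)\,h^k$ for $k=0,1,2,3,4$ yields exactly the linear system \eqref{fifcoef5} with $\gamma_1,\gamma_2$ replaced by $m,n$. Since that system was already solved in Section 2 and its unique solution is recorded in \eqref{fifcoef}, the coefficients $m,n,\mu_1,\mu_0,\mu_{-1}$ must coincide with $\gamma_1,\gamma_2,\mu_1,\mu_0,\mu_{-1}$ there, and the remainder is $O(h^5)$ uniformly in $x$, which is the claimed estimate.
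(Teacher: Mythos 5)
Your proposal is correct and follows essentially the same route as the paper: the paper likewise combines the tempered shifted Gr\"{u}nwald expansions (\ref{2.2.15}) with the Taylor expansion of ${}_{-\infty}D_x^{\alpha}(e^{\lambda x}u(x))$ at $x\pm h$, multiplies by $e^{-\lambda x}$ to obtain exactly your ``tempered Taylor identity'' (\ref{2.2.35}), and then eliminates the $h^k$ terms for $k=1,\dots,4$ via the same linear system (\ref{fifcoef5}) already solved in Section 2. No gaps.
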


Similar to the discussions in Subsection 2.2, we show three equalities
 \begin{equation} \label{2.2.15}
\Delta_p^{\alpha,\lambda } u(x)=\,_{-\infty}D^{\alpha,\lambda }_x u(x)+\sum\limits_{l=1}^{4}a_{p, l}^\alpha\, \, _{-\infty}D_x^{\alpha+l,\lambda}u(x)h^l+O(h^5),\quad p=1,0,-1.
\end{equation}
 In view of  the Taylor  expansion we know
 \begin{equation}\label{2.2.25}
\begin{array}{llll}
\displaystyle  _{-\infty}D^\alpha _xe^{\lambda (x-h) }u (x-h)  =\,_{-\infty}D^\alpha_x e^{\lambda x }u(x )+(-1)^l\sum\limits_{l=1}^{4}\frac{1}{l!}\,_{-\infty}D^{\alpha+l}_xe^{\lambda x } u(x )h^l+O(h^5),\\\\
 \displaystyle  _{-\infty}D^\alpha_xe^{\lambda (x+h)}u(x+h) =\,_{-\infty}D^\alpha_x e^{\lambda x }u(x )+\sum\limits_{l=1}^{4}\frac{1}{l!}\,_{-\infty}D^{\alpha+l}_x e^{\lambda x }u(x )h^l+O(h^5).
\end{array}
\end{equation}
As $e^{\lambda x} \, _{-\infty}D^{\alpha,\lambda }_x u(x )=\,_{-\infty}D^\alpha _xe^{\lambda x }u(x )$,
multiplying $e^{-\lambda x }$ in equations of (\ref{2.2.25}) we obtain
 \begin{equation}\label{2.2.35}
\begin{array}{llll}
\displaystyle e^{-\lambda h}\, _{-\infty}D^{\alpha,\lambda }_xu(x-h) =\,_{-\infty}D^{\alpha,\lambda }_x u(x )+(-1)^l\sum\limits_{l=1}^{4}\frac{1}{l!}\,_{-\infty}D^{\alpha+l,\lambda}_x u(x )h^l+O(h^5),\\\\
 \displaystyle e^{\lambda h}\, _{-\infty}D^{\alpha,\lambda }_xu(x+h) =\,_{-\infty}D^{\alpha,\lambda }_x u(x )+\sum\limits_{l=1}^{4}\frac{1}{l!}\,_{-\infty}D^{\alpha+l,\lambda}_x u(x )h^l+O(h^5).
\end{array}
\end{equation}
So in order to get the fifth order approximation, combining (\ref{2.2.15}) and (\ref{2.2.35}), we just need to eliminate the low order terms corresponding to
$h^k(k=1,2,3,4)$. Then we get the equation (\ref{fif5}).
To show the efficiency of the proposed  approximation in this subsection, we numerically solve the Example \ref{extwo2} and present the numerical results in Table \ref{tabtwo2}, where $u$ and $U$ denote the exact solution and approximate value, respectively. Obviously, the approximations have fifth order accuracy
which verify the theoretical analysis.
 \begin{example}\label{extwo2}
 Here we also
consider the steady state tempered fractional diffusion problem
\[ _0D_x^{\alpha,\lambda} u(x)=\frac{720e^{-\lambda x} x^{6 -  \alpha}}{\Gamma(7 - \alpha)} ,\quad x\in(0,1)\]
with the boundary conditions $u(0)=0$ and $u(1)=e^{-\lambda} $, and $\alpha \in (1,2)$. The exact solution is $u(x)=e^{-\lambda x}x^{6}$.
\end{example}
\begin{table}[htbp]
\centering\small
\caption{Numerical errors and   convergence rates in $L_\infty$ norm and $L_2$ norm of scheme (\ref{fif5}) to solve Example \ref{extwo2}, where $U$ denotes  the numerical solution, $h_x$ is space step size and  $\lambda=1.5$.}
\vspace{1em}{\begin{tabular}{@{}cccccc@{}} \hline
 $\alpha$ & $h_x$ & $\|u-U\|_2$ &rate & $\|u-U\|_\infty$ &rate  \\
 \hline
1.1& $1/8$   & $      8.2144e-06  $ &   $         $ & $    1.4011e-05$ &   $ $  \\
& $1/16 $  &  $       2.4016e-07  $ &   $     5.0961  $  &  $     4.1068e-07  $ &   $  5.0924   $  \\
& $1/ 32$  &  $     7.3703e-09  $  &   $    5.0261  $   & $     1.2489e-08   $  &    $    5.0392    $  \\
& $1/ 64$   &  $    2.2851e-10  $  &   $  5.0114   $   &  $    3.8494e-10    $  &    $   5.0199    $   \\
& $1/128$   &  $    7.1140e-12  $  &   $    5.0054  $  &  $      1.1945e-11  $  &   $    5.0101  $  \\
\hline
1.5& $1/8$     & $  3.1463e-06  $ &   $          $ & $    5.3572e-06  $  &  $  $\\
& $1/16 $   &  $   9.7972e-08 $ &   $   5.0051   $    &  $    1.6423e-07 $ &   $   5.0276   $ \\
& $1/ 32$    &  $   3.1300e-09  $  &  $   4.9681  $    &  $   5.1523e-09 $  &   $  4.9944    $ \\
& $1/ 64$     &  $   9.9944e-11   $  &   $   4.9689   $   &  $     1.6239e-10 $  &    $   4.9876    $  \\
& $1/128$     &  $     3.1783e-12  $  &   $   4.9748   $   &  $  5.1120e-12     $  &   $   4.9895  $\\
\hline
\end{tabular}}\label{tabtwo2}
\end{table}

\subsection{Quasi-compact scheme for tempered space fractional diffusion equation}
In this subsection, we present the numerical scheme of the variant of  space fractional diffusion equation
whose space fractional derivatives are replaced by the tempered fractional derivatives:
\begin{equation}\label{equ1la}
\left\{
\begin{array}{lll}
\displaystyle \frac{\partial u(x,t)}{\partial t} =K_1\, _aD_x^{\alpha,\lambda } u(x,t)+K_2\,_xD_b^{\alpha,\lambda } u(x,t)+f(x,t),  &(x,t)\in (a,b)\times(0,T], \\\\
\displaystyle  u(x,0)=u_0(x),&x\in[a,b],\\\\
\displaystyle   u(a,t)=\phi_a(t),\,\,u(b,t)=\phi_b(t),& t\in[0,T],
\end{array}
\right.
\end{equation}
where $\lambda\geq0$.  
Utilizing the Crank-Nicolson technique for the time discretization of (\ref{equ1la}) and  fourth order quasi-compact discretization in space direction, we get
\begin{equation}
\begin{array}{l }
\displaystyle
P_x^\lambda\frac{u_j^{n+1}-u_j^n}{\tau}
=\frac{K_1\tau}{2 } \,_{L}D_{h }^{\alpha,\lambda } u_j^{n} +\frac{K_2\tau}{2 } \,_{R}D_{h }^{\alpha,\lambda } u_j^{n}+
\frac{K_1\tau}{2 } \,_{L}D_{h }^{\alpha,\lambda } u_j^{n+1} +\frac{K_2\tau}{2 } \,_{R}D_{h }^{\alpha,\lambda } u_j^{n+1}  \\\\
\displaystyle ~~~~  ~~~~  ~~~~  ~~~~  ~~~~   + P_x^\lambda f(x_j,t_{n+1/2})  + R^{n+1/2}_j,
\end{array}
\end{equation}
where
$$\,_{L}D_{h }^{\alpha,\lambda }  u_j^{n} =: \mu_1 \tilde{\Delta}_1^{\alpha,\lambda} u_j^{n}+ \mu_0 \tilde{\Delta}_0^{\alpha,\lambda} u_j^{n}+ \mu_{-1} \tilde{\Delta}_{-1}^{\alpha,\lambda} u_j^{n}=\frac{1}{h^\alpha}\sum_{k=0}^{j+1}w_{k}^{(\alpha,\lambda)}u_{j-k+1}^{n},$$
 $$\,_{R}D_{h }^{\alpha,\lambda} u_j^{n}=:\mu_1 \tilde{\Lambda}_1^{\alpha,\lambda} u_j^{n}+ \mu_0 \tilde{\Lambda}_0^{\alpha,\lambda} u_j^{n}+ \mu_{-1} \tilde{\Lambda}_{-1}^{\alpha,\lambda} u_j^{n}=\frac{1}{h^\alpha}\sum_{k=0}^{M-j+1}w_{k}^{(\alpha,\lambda)}u_{j+k-1}^{n},$$
the coefficients $w_0^{(\alpha,\lambda)}=\mu_1g_0^{(\alpha)}e^{\lambda h}$,  $w_1^{(\alpha,\lambda)}=\mu_1g_1^{(\alpha)}+\mu_0g_0^{(\alpha)}$, and $w_k^{(\alpha,\lambda)}=(\mu_1g_k^{(\alpha)}+\mu_0g_{k-1}^{(\alpha)}+\mu_{-1}g_{k-2}^{(\alpha)})e^{-(k-1)\lambda h}$, $k=2,\cdots, M$ and $R^{n+1/2}_j\leq C(\tau^2+h^4)$.
Denoting $U_j^n$ as the numerical approximation of $u_j^n$, we obtain the Crank-Nicolson quasi-compact scheme for (\ref{equ1la})
\begin{equation}\label{fourthccd4}
\begin{array}{lll}
\displaystyle
 P^\lambda_xU_j^{n+1}-
\frac{K_1\tau}{2 } \,_{L}D_{h}^{\alpha,\lambda } U_j^{n+1} -\frac{K_2\tau}{2 } \,_{R}D_{h}^{\alpha,\lambda } U_j^{n+1}\\\\
\displaystyle=P^\lambda_xU_{j,s}^{n}+\frac{K_1\tau}{2 } \,_{L}D_{h}^{\alpha,\lambda } U_j^{n} +\frac{K_2\tau}{2 } \,_{R}D_{h}^{\alpha,\lambda } U_j^{n}
+\tau P^\lambda_x f_j^{n+1/2}.
\end{array}
\end{equation}
For convenience, the approximation scheme (\ref{fourthccd4}) may be written in matrix form
\begin{equation}\label{matrixfourth4}
 (P^\lambda_\alpha-B^\lambda)U^{n+1}=(P^\lambda_\alpha+ B^\lambda)U^{n}+\tau F^n+H^\lambda,
 \end{equation}
 where $(P^\lambda_\alpha)_{j,s}=(P_\alpha)_{j,s}e^{(j-s)\lambda h}$, $B^\lambda=\frac{\tau}{2h^\alpha}(K_1A^\lambda_\alpha+K_2(A^\lambda_\alpha)^T)$,
$(A^\lambda_\alpha)_{j,s}=(A_\alpha)_{j,s}e^{(j-s)\lambda h}$,  $
U^n=(U^n_1,U^n_2,\cdots ,U^n_{M-1})^T$, and $F^n=(f^{n+1/2}_1,f^{n+1/2}_2,\cdots ,f^{n+1/2}_{M-1})^T
$. 
\begin{rem}
Note that when taking $\lambda=0$, the tempered fractional diffusion equation (\ref{equ1la}) reduces to the fractional diffusion equation (\ref{equ1}) and its scheme (\ref{fourthccd4}) reduces to (\ref{fourthccd}).
\end{rem}

\section{Numerical experiments}
For the numerical schemes of the fractional diffusion equation, we present some numerical results in one and two dimension cases to verify the theoretical results including the convergence orders and unconditional stability. For the tempered fractional diffusion equation, the numerical simulations are also performed which show the effectiveness of the proposed scheme; and the desired fourth order convergence is also obtained.
\begin{example}
Consider the following tempered space fractional diffusion equation
\begin{equation}\label{5.1}
\frac{\partial u}{\partial t}=\,_0D_x^{\alpha,\lambda }u(x)-e^{-t-\lambda x}\left(x^6+\frac{720 x^{6 -  \alpha}}{\Gamma(7 - \alpha)}\right) ,\quad (x,t)\in(0,1)\times(0,1],
\end{equation}
with the boundary conditions  $u(0,t)=0$ and  $u(1,t)=e^{-t-\lambda}$ and  the initial value  $u(x,0)=e^{-\lambda x}x^6, \,x\in[0,1]$. The exact solution is  $u(x)=e^{-t-\lambda x}x^{6}$.
\end{example}

In Table \ref{tablex}, we show that the quasi-compact scheme (\ref{fourthccd4}) is fourth order convergent in space.

\begin{table}[htbp]
\centering\small
\caption{  Numerical errors and convergence rates in   $L_2$ norm   to (\ref{5.1}) approximated by the quasi-compact difference scheme (\ref{fourthccd4}) at $t=1$ with $\tau=h^2$.}\vspace{1em} {\begin{tabular} {@{}cccccc@{}} \hline
 &  &\multicolumn{2}{c}{$\lambda=0$} & \multicolumn{2}{c}{$\lambda=1.5$ }\\
\cline{3-4} \cline{5-6}
$\alpha $&$M_X$ & $\|u-U\|_2$ &rate & $\|u-U\|_2$ &rate   \\
 \hline
1.1&$8$ & $   2.4321e-04   $ &   $         $   & $   1.6011e-04   $ &   $         $   \\
&$16 $&  $  1.3090e-05    $ &   $  4.2156    $   &  $  9.1799e-06   $ &   $   4.1245  $   \\
&$ 32$ &  $    7.4456e-07  $  &   $  4.1360    $     &  $    5.3102e-07  $  &   $  4.1117  $     \\
&$ 64$ &  $  4.4692e-08    $  &   $ 4.0583    $     &  $ 3.1555e-08   $  &   $ 4.0728   $     \\
&$128$ &  $   2.7455e-09   $  &   $    4.0249    $   &  $   1.9171e-09   $  &   $     4.0408    $    \\
 \hline
1.5&$8$ & $ 1.2806e-04      $ &   $          $   & $  7.5690e-05   $ &   $          $   \\
&$16 $&  $   8.0137e-06 $ &   $   3.9982   $      &  $    4.7019e-06 $ &   $    4.0088 $      \\
&$ 32$ &  $  5.0273e-07    $  &  $    3.9946   $      &  $  2.9387e-07    $  &  $    4.0000 $     \\
&$ 64$ &  $   3.1507e-08    $  &   $  3.9960   $      &  $  1.8396e-08   $  &   $   3.9978  $     \\
&$128$ &  $  1.9724e-09      $  &   $    3.9976   $   &  $   1.1512e-09     $  &   $     3.9981$   \\
\hline
1.9&$8$ & $  4.4604e-05    $ &   $         $ &     $   2.3601e-05   $ &   $         $   \\
&$16 $&  $   2.8032e-06    $ &   $  3.9920   $    &  $   1.4844e-06    $ &   $  3.9909  $    \\
&$ 32$ &  $  1.7561e-07    $  &  $   3.9967   $     &  $    9.2998e-08 $  &  $    3.9965 $     \\
&$ 64$ &  $    1.0987e-08   $  &  $     3.9985  $     &  $     5.8188e-09  $  &  $       3.9984  $     \\
&$128$ &  $   6.8700e-10    $  &  $  3.9993    $  &  $    3.6385e-10   $  &  $   3.9993   $   \\
\hline
\end{tabular}}\label{tablex}
\end{table}

\begin{example}
Consider the following space fractional diffusion equation
\begin{equation}\label{5.2}
\frac{\partial u}{\partial t}=\,_0D_x^{\alpha } u(x)+\,_xD_1^{\alpha } u(x)+f(x,t) ,\quad  (x,t)\in(0,1)\times(0,1].
\end{equation}
Then the source term is
$$
\begin{array}{lll}
\displaystyle
f(x,t)=-e^{-t}(x^5(1-x)^5-\Gamma(11)( x^{10 - \alpha}+(1-x)^{10 - \alpha}) /\Gamma(11 -\alpha)\\\\
\displaystyle~~  ~~~~ ~~~~   + 5 \Gamma(10)( x^{9 - \alpha}+ (1-x)^{9 - \alpha} )/\Gamma(10 - \alpha)
-10 \Gamma(9) (x^{8 - \alpha}+ (1-x)^{8 - \alpha}) /\Gamma(9 - \alpha)\\\\
\displaystyle~~    ~~~~~~~~ + 10 \Gamma(8)( x^{7 - \alpha}+(1-x)^{7 - \alpha} )/\Gamma(8 - \alpha)
- 5 \Gamma(7) (x^{6 - \alpha}+(1-x)^{6 - \alpha}) /\Gamma(7 - \alpha)\\\\
\displaystyle~~   ~~~~~~~~ +\Gamma(6)(x^{5 - \alpha}+(1-x)^{5 - \alpha})/\Gamma(6 - \alpha) .
\end{array}
$$
The exact solution is given by  $u(x)=e^{-t}x^{5}(1-x)^5$. In the domain  $t\in[0,1]$,  the boundary conditions are  $u(0,t)=0$ and $u(1,t)=0$.  The initial value is  $u(x,0)=x^5(1-x)^5,\, x\in[0,1]$.
\end{example}

 Table \ref{tabex12}  shows that the quasi-compact scheme (\ref{fourthccd}) to solve the one dimensional two sided fractional
 diffusion equation also is fourth order convergent.

\begin{table}[htbp]
\centering\small
\caption{Numerical errors and  convergence rates in $L_\infty$ norm and $L_2$ norm   to (\ref{5.2}) approximated by the quasi-compact difference scheme (\ref{fourthccd}) at $t=1$ with $\tau=h^2$.} \vspace{1em}  {\begin{tabular} {@{}cccccccccc@{}} \hline
$\alpha $&$M_x$ & $\|u-U\|_2$ &rate & $\|u-U\|_\infty$ &rate\\
 \hline
1.1&$8$ & $    9.4394e-07          $ &   $         $ & $  1.4488e-06  $ &   $ $   \\
&$16 $&  $    7.7153e-08  $ &   $   3.6129 $  &  $     1.2492e-07 $ &   $    3.5358 $   \\
&$ 32$ &  $   5.6349e-09 $  &   $    3.7753  $   & $    9.1789e-09   $  &    $      3.7665  $  \\
&$ 64$ &  $    3.8217e-10 $  &   $   3.8821 $   &  $     6.2304e-10 $  &    $     3.8809   $   \\
&$128$ &  $    2.4920e-11  $  &   $    3.9389 $  &  $     4.0617e-11$  &   $       3.9392$  \\
 \hline
1.5&$8$ & $     1.4931e-06   $ &   $          $ & $     2.5326e-06  $ &$ $  \\
&$16 $&  $   1.0619e-07 $ &   $ 3.8135  $    &  $      1.7066e-07 $ &   $      3.8915  $   \\
&$ 32$ &  $    7.2530e-09 $  &  $   3.8720   $    &  $        1.1354e-08$  &   $    3.9098  $  \\
&$ 64$ &  $     4.7498e-10   $  &   $  3.9326  $   &  $      7.2882e-10 $  &    $      3.9615  $   \\
&$128$ &  $       3.0416e-11 $  &   $   3.9650  $   &  $       4.7293e-11 $  &   $     3.9459 $   \\
\hline
1.9&$8$ & $   1.5101e-06$ &   $         $ & $     2.6288e-06  $ &   $      $   \\
&$16 $&  $    8.5433e-08  $ &   $    4.1437 $  &  $       1.3980e-07  $ &   $       4.2329 $   \\
&$ 32$ &  $      5.3511e-09 $  &  $   3.9969   $   &  $        8.3686e-09 $  &    $     4.0622 $   \\
&$ 64$ &  $    3.3620e-10 $  &  $    3.9925  $   &  $     5.1288e-10 $  &    $     4.0283$   \\
&$128$ &  $    2.1078e-11  $  &  $     3.9955$  &  $    3.2590e-11 $  &   $  3.9761   $  \\
\hline
\end{tabular}}\label{tabex12}
\end{table}

\begin{example}
The following two dimensional two sided fractional diffusion problem
\begin{equation}\label{5.3}
\frac{\partial u(x,y,t)}{\partial t}=\,_0D_x^{\alpha  }u(x,y,t)+\,_xD_1^{\alpha  }u(x,y,t)+\,_0D_y^{\beta  }u(x,y,t)+\,_yD_1^{\beta  } u(x,y,t)+f(x,y,t) ,
\end{equation}
is considered   in the domain $ \Omega= (0,1)^2$ and $t\in(0,1]$.
The source term is
$$
\begin{array}{lll}
\displaystyle
f(x,t)=-10^6e^{-t}\left[x^5(1-x)^5y^5(1-y)^5\right.\\\\
\displaystyle~~~~     ~~~~~~~~-\left(\frac{\Gamma(11)}{\Gamma(11 -\alpha)}( x^{10 - \alpha}+(1-x)^{10 - \alpha}) \right.
+ \frac{5 \Gamma(10)}{\Gamma(10 - \alpha)}( x^{9 - \alpha}+ (1-x)^{9 - \alpha} )\\\\
\displaystyle     ~~~~~~~~~~~~ 
-\frac{10 \Gamma(9)}{\Gamma(9 - \alpha)} (x^{8 - \alpha}+ (1-x)^{8 - \alpha}) + \frac{10 \Gamma(8)}{\Gamma(8 - \alpha)}( x^{7 - \alpha}+(1-x)^{7 - \alpha} )
\\\\
\displaystyle     ~~~~~~~~~~~~- \frac{5 \Gamma(7)}{\Gamma(7 - \alpha)} (x^{6 - \alpha}+(1-x)^{6 - \alpha})  \left.+\frac{\Gamma(6)}{\Gamma(6 - \alpha)}(x^{5 - \alpha}+(1-x)^{5 - \alpha})\right)y^5(1-y)^5 \\\\
\displaystyle ~~~~~~~~~~~~ -\left(\frac{\Gamma(11)}{\Gamma(11 -\beta)}( y^{10 - \beta}+(1-y)^{10 - \beta}) \right.+ \frac{5 \Gamma(10)}{\Gamma(10 - \beta)}( y^{9 -\beta}+ (1-y)^{9 - \beta} )\\\\
\displaystyle  ~~~~~~~~ ~~~~  
-\frac{10 \Gamma(9) }{\Gamma(9 - \beta)}(y^{8 - \beta}+ (1-y)^{8 - \beta}) + \frac{10 \Gamma(8)}{\Gamma(8 - \beta)}( x^{7 - \beta}+(1-x)^{7 - \beta} )\\\\
\displaystyle  ~~~~~~~~  ~~~~\left.- \frac{5 \Gamma(7)}{\Gamma(7 - \beta)} (y^{6 - \beta}+(1-y)^{6 - \beta}) \left. +\frac{\Gamma(6)}{\Gamma(6 - \beta)}(y^{5 - \beta}+(1-y)^{5 - \beta})\right)x^5(1-x)^5 \right].
\end{array}
$$
The exact solution is given by  $u(x)=10^6e^{-t}x^{5}(1-x)^5 y^{5}(1-y)^5$. The boundary condition is $u(x,y,t)=0$  with $(x,y)\in\partial \Omega$ and $t\in[0,1]$.  The initial value is  $u(x,y,0)=10^6x^5(1-x)^5 y^5(1-y)^5$ with $ (x,y)\in[0,1]^2$.
\end{example}

In Table \ref{t6.3}, we present the numerical errors $\|u-U\|_2$  and the corresponding convergence orders with space  step size $h_x=h_y$, where
$U$ is the solution of the quasi-compact difference scheme (\ref{d1adi}) or (\ref{d2adi}). It can be noted that the schemes are fourth order convergent, which is in agreement with the theoretical convergence analysis.

\begin{table}[htbp]
\centering\small
\caption{Numerical errors and  convergence rates in $L_2$ norm   to (\ref{5.3}) approximated by the quasi-compact difference schemes (\ref{d1adi}) and (\ref{d2adi}), respectively, at $t=1$ with $\tau=h_x^2=h_y^2$.}\vspace{1em} {\begin{tabular} {@{}cccccc @{}} \hline
 $ $ & $$ &  \multicolumn{2}{c}{$(\alpha,\beta)=(1.1,1.5)$} & \multicolumn{2}{c}{$(\alpha,\beta)=(1.4,1.9)$}  \\
\cline{3-4} \cline{5-6}
&$M_x$ & $\|u-U\|_2$ &rate  & $\|u-U\|_2$ &rate   \\
 \hline
D'yakonov&$8$ & $  7.2903e-04      $ &   $         $  & $   8.4729e-04     $ &   $         $   \\
&$16 $&  $    5.3915e-05$ &   $    3.7572 $   &  $  5.7210e-05 $ &   $    3.8885  $    \\
&$ 32$ &  $  3.7385e-06$  &   $     3.8502 $     &  $  3.8200e-06$  &   $   3.9046  $    \\
&$ 64$ &  $    2.4685e-07$  &   $   3.9207 $    &  $  2.4748e-07$  &   $3.9482 $      \\
&$128$ &  $    1.5880e-08 $  &   $   3.9584  $    &  $   1.5763e-08  $  &   $    3.9727  $   \\
 \hline
Douglas&$8$ & $    7.2903e-04 $ &   $          $  & $   8.4729e-04  $ &   $          $   \\
&$16 $&  $   5.3915e-05 $ &   $   3.7572  $      &  $  5.7210e-05 $ &   $ 3.8885  $      \\
&$ 32$ &  $     3.7385e-06$  &  $    3.8502    $      &  $  3.8200e-06 $  &  $   3.9046   $     \\
&$ 64$ &  $     2.4685e-07 $  &   $  3.9207  $     &  $  2.4748e-07 $  &   $   3.9482   $     \\
&$128$ &  $     1.5880e-08$  &   $  3.9584  $    &  $  1.5763e-08$  &   $   3.9727  $     \\
\hline
\end{tabular}}\label{t6.3}
\end{table}

\section{Conclusions}
The continuous time random walk (CTRW) model is the basic stochastic process in statistical physics. The CTRW model characterizes the L{\'e}vy flight if the first moment of the distribution of the waiting time is finite, and the jump length obeys the power law distribution and its second moment is infinite; the corresponding Fokker-Planck equation of the process is the space fractional diffusion equation. Sometimes because of the limit of space size, the power law distribution of the jump length has to be tempered. The Fokker-Planck equation of the new stochastic process is the tempered space fractional diffusion equation. This paper provides the basic strategy of deriving the quasi-compact high order discretizations for space fractional derivative and tempered space fractional derivative. As concrete examples, fourth order discretizations are detailedly discussed and applied to solve the (tempered) space fractional diffusion equation, and the extensive numerical simulations confirm the effectiveness of the provided schemes. In fact, the strict numerical stability and convergence analysis are also performed for the one and two dimensional space fractional diffusion equations.

%


\bibliographystyle{elsarticle-num}
\bibliography{<your-bib-database>}
\end{document}